\theoremstyle{plain}
\newtheorem{theorem}{Theorem}[subsection]
\newtheorem{lemma}[theorem]{Lemma}
\newtheorem{corollary}[theorem]{Corollary}
\newtheorem{proposition}[theorem]{Proposition}
\theoremstyle{definition}
\newtheorem{convention}[theorem]{Convention}
\newtheorem{definition}[theorem]{Definition}
\newtheorem{notation}[theorem]{Notation}
\newtheorem{example}[theorem]{Example}
\newtheorem{remark}[theorem]{Remark}
\newtheorem{construction}[theorem]{Construction}
\DeclareMathOperator\ad{ad}
\DeclareMathOperator\Ad{Ad}
\DeclareMathOperator\Aut{Aut}
\DeclareMathOperator\cfo{cfo}
\DeclareMathOperator\diag{diag}
\DeclareMathOperator\GL{GL}
\DeclareMathOperator\gs{gs}
\DeclareMathOperator\End{End}
\DeclareMathOperator\Hom{Hom}
\DeclareMathOperator\id{id}
\DeclareMathOperator\Id{I}
\DeclareMathOperator\Lie{Lie}
\DeclareMathOperator\LT{LT} 
\DeclareMathOperator\Mat{Mat} 
\DeclareMathOperator\mg{mg} 
\DeclareMathOperator\rank{rank}
\DeclareMathOperator\SO{SO}
\DeclareMathOperator\spann{span}
\DeclareMathOperator\supp{supp}
\DeclareMathOperator\TKK{TKK}
\newcommand\andd{\quad \text{and} \quad}
\newcommand\form{(\, ,\, )}
\newcommand\orr{\quad \text{or} \quad}
\newcommand\order[1]{\left\vert #1 \right\vert}
\newcommand\ot{\otimes}
\newcommand \set[1]{\{#1 \}}
\newcommand\suchthat{\mid}
\newcommand \ft {\mathfrak t}
\newcommand \fg {\mathfrak g}
\newcommand \fh {\mathfrak h}
\newcommand \fn {\mathfrak n}
\newcommand \fs {\mathfrak s}
\newcommand\cA{\mathcal A}
\newcommand\cB{\mathcal B}
\newcommand\cE{{\mathcal E}}
\newcommand\cH{{\mathcal H}}
\newcommand\cJ{{\mathcal J}}
\newcommand\cL{{\mathcal L}}
\newcommand\cM{{\mathcal M}}
\newcommand\cO{{\mathcal O}}
\newcommand\cX{{\mathcal X}}
\newcommand{\al}{\alpha}
\newcommand{\ep}{\varepsilon}
\newcommand{\gm}{\gamma}
\newcommand{\Gm}{\Gamma}
\newcommand{\lm}{\lambda}
\newcommand{\Lm}{\Lambda}
\newcommand{\ph}{\varphi}
\newcommand{\sg}{\sigma}
\newcommand\hfg{{\hat{\mathfrak g}}}
\newcommand\hm{{\hat m}}
\newcommand\hsg{{\hat\sigma}}
\newcommand\modell{{\bar{\ell}}}
\newcommand{\blm}{{\bar\lambda}}
\newcommand{\bLm}{{\bar\Lambda}}
\newcommand\modz{{\bar 0}}
\newcommand\bbC{\mathbb C}
\newcommand\bbQ{\mathbb Q}
\newcommand\bbbQ{{\bar\bbQ}}
\newcommand\bbZ{\mathbb Z}
\newcommand\Zn{{\mathbb Z^n}}
\newcommand\boldm{{\mathbf m}}
\newcommand\boldone{{\boldsymbol{1}}}
\newcommand\bsg{{\boldsymbol{\sg}}}
\newcommand\btau{{\boldsymbol{\tau}}}
\newcommand\dCL[2]{C(\cL)_{#1}^{#2}}
\newcommand\dL[2]{\cL_{#1}^{#2}}
\newcommand\dLp[2]{{\cL'}_{#1}^{#2}}
\newcommand\ds[2]{\fs_{#1}^{#2}}
\newcommand{\De}{\Delta}
\newcommand{\Dec}{{\Delta^\times}}
\newcommand{\Deen}{{\Delta_{\mathrm{en}}}}
\newcommand{\Dep}{{\Delta_{+}}}
\newcommand{\Dei}{{\Delta_{\mathrm{ind}}}}
\newcommand{\Deip}{{\Delta'_{\mathrm{ind}}}}
\newcommand{\Deic}{{\Delta_{\mathrm{ind}}^\times}}
\newcommand{\Desh}{{\De_{\mathrm{sh}}}}
\newcommand\loopm{M_\boldm}
\newcommand\loopmp{M_{\boldm'}}
\newcommand\cLs{\cL^{(s)}}
\newcommand\sem{\Lm}
\newcommand\shom{s}
\newcommand\sltwo{\operatorname{sl}_2}
\newcommand\tbsg{{\tilde\bsg}}
\newcommand\tsg{\tilde \sigma}
\newcommand\tL{\widetilde{\cL}}
\begin{document}

\title[Multiloop realization of EALAs and Lie tori]{Multiloop realization of extended affine Lie algebras and Lie tori}
\author{Bruce Allison}
\address[Bruce Allison]
{Department of Mathematical and Statistical Sciences\\ University of
Alberta\\Edmonton, Alberta, Canada T6G 2G1}
\email{ballison@ualberta.ca}
\author{Stephen Berman}
\address[Stephen Berman]
{Saskatoon, Saskatchewan, Canada}
\email{sberman@shaw.ca}
\author{John Faulkner}
\address[John Faulkner]
{Department of Mathematics\\
University of Virginia\\
Kerchof Hall, P.O.~Box 400137\\
Charlottesville VA 22904-4137 USA}
\email{jrf@virginia.edu}
\author{Arturo Pianzola}
\address[Arturo Pianzola]
{Department of Mathematical and Statistical Sciences\\ University
of Alberta\\Edmonton, Alberta, Canada T6G 2G1}
\email{a.pianzola@ualberta.ca}
\thanks{The authors Allison and Pianzola gratefully acknowledge the support
of the Natural Sciences and Engineering Research Council of Canada.}
\subjclass[2000]{Primary: 17B65; Secondary: 17B67, 17B70 }
\date{September 6, 2007}

\begin{abstract}
An important theorem in the  theory of infinite dimensional Lie algebras states that any affine
Kac-Moody algebra
can be realized (that is to say constructed explicitly) using loop algebras.
In this paper, we consider the corresponding problem for a class
of Lie algebras called extended affine Lie algebras (EALAs) that generalize affine algebras.
EALAs occur in families that are constructed from centreless Lie tori,
so the realization problem for EALAs reduces to the realization problem for
centreless Lie tori.  We show that all but one
family of centreless Lie tori can be realized using multiloop algebras (in place of loop algebras).
We also obtain necessary and sufficient for two centreless Lie tori
realized in this way to be isotopic, a relation that corresponds to isomorphism
of the corresponding families of EALAs.
\end{abstract}

\maketitle

An \emph{extended affine Lie algebra}
(EALA) over a field of characteristic zero consists  of a Lie algebra $\cE$,
together with an nondegenerate invariant symmetric bilinear form $\form$ on $\cE$,
and a nonzero finite dimensional ad-diagonalizable  subalgebra $\cH$ of $\cE$,
such that a list of natural axioms are imposed (see \cite{N2} and the references
therein).  (Although, by definition, an EALA consists of a triple $(\cE,\cH, \form)$,
we usually abbreviate it as $\cE$.)
One of the axioms states that the group generated by the isotropic roots of $\cE$ is a
free abelian group $\Lm$ of finite rank, and the rank of $\Lm$ is called the \emph{nullity} of $\cE$.
As the term EALA suggests, the defining axioms for an EALA
are modeled after the properties of affine Kac-Moody Lie algebras; and in fact affine Kac-Moody Lie algebras
are precisely the extended affine Lie algebras of nullity 1.
So it is natural to look for a realization theorem for EALAs of arbitrary nullity $\ge 1$.
(Nullity 0 EALAs are finite dimensional simple Lie algebras
and we do not consider them in this context.)

The classical procedure for realizing affine Lie algebras using loop algebras proceeds in two
steps \cite[Chap.~7 and 8]{K}.
In the first step, the derived algebra modulo its centre of the affine algebra
is constructed as the loop algebra
of a diagram automorphism of a finite dimensional simple Lie algebra.
This loop algebra is naturally graded by $Q\times \bbZ$, where $Q$ is the root
lattice of a finite irreducible (but not necessarily reduced) root system.
In the second step, the affine algebra itself, together with a Cartan subalgebra and a nondegenerate invariant
bilinear form for the affine algebra, is built from the graded loop algebra by forming a central extension (with one dimensional centre) and adding a (one dimensional) graded algebra of derivations.

The replacement for the derived algebra modulo its centre in EALA theory is the centreless
core of the EALA, and centreless cores of EALAs have been characterized
axiomatically as centreless Lie tori \cite{Y2,N1}.
A \emph{Lie $\Lm$-torus}, or
a Lie torus for short, is a $Q\times \Lm$-graded Lie algebra $\cL$ satisfying a simple list of axioms,
where $Q$ is the root lattice of an irreducible finite root system $\De$ and
$\Lm$ is a free abelian group of finite rank (see subsection \ref{subsec:Lietoribasic}).
The type of the root system $\De$ is then called the \emph{type} of $\cL$
and the rank of $\Lm$ is called the \emph{nullity} of $\cL$.
A \emph{centreless Lie torus}
is a Lie torus with trivial centre.

Starting with a centerless Lie torus,
E.~Neher has shown how to carry out the second step of the classical realization
procedure \cite{N2}.
Specifically, given a centreless Lie torus~$\cL$ of nullity $n$, Neher gave a
simple construction of a family $\set{E(\cL,D,\tau)}_{(D,\tau)}$ of EALAs of nullity $n$
parameterized by  pairs $(D,\tau)$, where $D$ is a graded
Lie algebra of derivations of $\cL$ (subject to some
additional restrictions) and $\tau$ is a graded invariant
2-cocycle with values in the graded dual $C$ of~$D$.
(As a vector space $E(\cL,D,\tau) = \cL \oplus C \oplus D$,
and $\tau$ is used in the description of the multiplication of two elements of $D$.  For the largest
possible choice of $D$, the algebra of skew-centroidal derivations of $\cL$,
$\cL \oplus C$ is the universal central extension of $\cL$.)
Moreover he announced that every  EALA $\cE$
is isomorphic as an EALA to $E(\cL,D,\tau)$ for some $\cL$ and
$(D,\tau)$ \cite[Thm~4.2]{N2}.  (Two EALAs are isomorphic if there is a Lie algebra isomorphism from one to the other
preserving the given forms up to nonzero scalar and preserving the given ad-diagonalizable subalgebras.)

Thus, only the first step in the realization procedure needs to be considered---the realization
problem for centreless Lie tori.  If $k$ is algebraically closed and $n\ge 1$,
we describe a construction
of a centreless Lie $\Zn$-torus $\LT(\fs,\bsg,\fh)$, called a \emph{multiloop Lie $\Zn$-torus},
starting from a finite dimensional simple Lie algebra~$\fs$, a sequence
$\bsg$ of $n$-commuting finite order automorphisms satisfying three conditions labeled as
(A1)-(A3), and a Cartan subalgebra $\fh$ of the fixed point algebra $\fs^\bsg$ of $\bsg$ is $\fs$.
In our first main theorem, which we call the \emph{realization theorem} (see Theorem \ref{thm:realization}),
we prove that a centreless Lie torus $\cL$ of nullity $\ge 1$ is bi-isomorphic
(defined below) to a multiloop Lie torus if and only if $\cL$ is fgc (that is,  $\cL$ is
finitely generated as a module over its centroid).

We note that when $n=1$ the conditions (A1)--(A3) on a single automorphism $\sg$
are equivalent to the assumption that $\sg$ is a diagram automorphism (see subsection \ref{subsec:n=1}),
thus relating our realization theorem to the classical realization theorem for affine algebras.

Our realization theorem is a satisfying answer to the realization problem for centreless Lie tori,
and hence for EALAs, since it is known from work of Neher \cite{N1} that there is only one family of centreless Lie tori
that are not fgc, namely the special linear
Lie algebras coordinatized by quantum tori whose unit group has
infinite commutator group (see Remark \ref{rem:almostallfgc}).
Of course by the classical realization theorem (or directly),
we know that these exceptions do not exist in nullity~1.

We also note that the realization  theorem solves a problem raised by J.~van de Leur in \cite{vdL}.
In that paper,  the author showed using a case-by-case argument
that the centreless core of any bi-affine Lie algebras can be
constructed as a multiloop algebra.  (Bi-affine Lie algebras are extended affine
Lie algebras of nullity 2 whose centreless cores had been previously constructed as loop algebras
of affine algebras.)  He pointed out that this was an experimental fact that
was not explained by a general theory.   Since centreless cores of bi-affine Lie algebras
are fgc \cite[Thm.~5.5(iii)]{ABP}, our theorem provides such an explanation  .

To complete our understanding of the realization of EALAs, it is important to
know when two multiloop Lie tori
give rise to the same family of EALAs.  To this end,
we define an \emph{isotope} of a Lie $\Lm$-torus $\cL$ as a Lie $\Lm$-torus
obtained from $\cL$ by shifting the $\Lm$-grading by a group homomorphism
in $\Hom(Q,\Lm)$.  Two Lie tori $\cL$ and $\cL'$, graded by $Q\times \Lm$
and $Q'\times \Lm'$ respectively, are said to \emph{bi-isomorphic} if there is a Lie
algebra isomorphism from $\cL$ to $\cL'$ that preserves the gradings up
to an isomorphism of $Q$ with $Q'$ and and isomorphism of $\Lm$ with $\Lm'$. Further, they are said to be
\emph{isotopic} if one is bi-isomorphic to an isotope of the other.
It turns out that isotopy is exactly what is needed
to distinguish between families of EALAs.
Indeed, it is shown in \cite[Thm.~6.2]{AF} that if two centreless Lie tori
$\cL$ and $\cL'$ are isotopic, then the corresponding families of EALAs are isomorphic,
in the sense that there is a bijection from the parameter set for the first
family onto the parameter set for the second family such that
corresponding EALAs are isomorphic.  Conversely, if some member of the first family is isomorphic as an EALA
to some member of the second family, then $\cL$ is isotopic to $\cL'$ \cite[Thm.~6.1]{AF}.
Hence families of EALAs up to isomorphism
are in one-to-one correspondence with isotopy classes of centreless Lie tori.

The next two theorems of the paper, Theorems \ref{thm:loopbi}
and \ref{thm:isotopy}, give necessary and sufficient conditions for two
multiloop Lie $\Zn$-tori $\LT(\fs,\bsg,\fh)$ and  $\LT(\fs',\bsg',\fh')$ to be
bi-isomorphic and isotopic respectively. In view of the above discussion,
the second theorem (which is proved using
the first theorem)
provides necessary
and sufficient conditions for the families of EALAs corresponding
to $\LT(\fs,\bsg,\fh)$ and  $\LT(\fs',\bsg',\fh')$ to be
isomorphic.  The conditions in both theorems are
conditions on the  input data  $(\fs,\bsg,\fh)$ and  $(\fs',\bsg',\fh')$.  In particular, the
conditions for bi-isomorphism are the existence of an isomorphism
$\ph: \fs \to \fs'$ and a matrix $P\in \GL_n(\bbZ)$ so that
$\bsg'= \ph \bsg^P\ph^{-1}$, where $(\bsg,P) \to \bsg^P$ is the natural
right action of $\GL_n(\bbZ)$ on the set of length $n$ sequences of commuting finite order
automorphisms of $\fs$.

The primary tools in the proof of the theorems just described are the general results in
\cite{ABFP} on realizations of graded-simple algebras.

In the last section of the paper, we consider the classification problems, up to bi-isomorphism and up to isotopy,
for fgc centreless Lie tori of nullity $n\ge 1$.  (As we've mentioned,
the last of these problems is equivalent to the classification
of families of EALAs of nullity $n$ up to isomorphism.)
These problems can be approached in two ways.
The first approach uses the coordinatization theorems which have been proved
for centreless Lie tori of each type $\De$.  (These theorems are the work of many
authors.  See \cite{AF} for a survey of the literature and a sample
of these results.)  Both classification problems have been solved for some types
in this way \cite{AF}, but other types remain to be considered.

In the last section of this article, we
propose a second approach, using multiloop Lie tori and our results, to the classification
problems.  In this approach we organize
the fgc centreless Lie tori by absolute type (see subsection \ref{subsec:absolute})
rather than by type.
Carrying the approach through for a given absolute type X$_\ell$ requires
precise information about sequences of commuting finite order
automorphisms (or equivalently gradings by a finite abelian group)
of a simple Lie algebra of type X$_\ell$ over~$k$.
The approach to the classification up to bi-isomorphism is summarized in Theorem \ref{thm:class}.
Once the classification up to bi-isomorphism is complete for a given
absolute type, one can, at least in principal, apply Theorem \ref{thm:isotopy}
to determine which of the representatives of the bi-isomorphism classes are isotopic.
However, it would be interesting to have a more precise method to accomplish this last step.

As an example we solve both classification problems for absolute type
F$_4$ using the work of Draper and Martin \cite{DM}
on gradings of the simple Lie algebra of type F$_4$
(see Examples \ref{ex:F4} and \ref{ex:F4isotopy}).
We expect that the approach will also work for (at least some) other types,
using classical and recent results about sequences of commuting automorphisms or gradings
(see for example \cite{BFM, BSZ, GP, KS}).

To close this introduction, we briefly outline the contents of the paper.
In section~\ref{sec:Lietori}, we recall or prove the properties that we will need
about Lie tori.
In section \ref{sec:biandiso}, we introduce bi-isomorphism
and isotopy for Lie tori.  Section
\ref{sec:multrealization} contains the construction of multiloop Lie tori
and the statement and proof of the realization theorem.  In section
\ref{sec:loopbiandiso} we prove our theorems on bi-isomorphism and isotopy of multiloop
Lie tori.  Finally, in section \ref{sec:approach},
we describe our approach to the
classification problems.

\medskip\noindent
\textbf{Acknowledgement:}  The authors thank Christina Draper for a very helpful
conversation related to her work with Candido Martin in \cite{DM} on gradings  of
$F_4$.

\medskip\noindent
\textbf{Assumptions and notation:}
Throughout this work \emph{we assume that $k$ is a field of
characteristic $0$}. All algebras are assumed to be algebras over
$k$.  We also \emph{assume that $\Lm$ is an abelian group} written
additively. (Beginning  in section \ref{sec:multrealization} we will make additional assumptions on
$k$ and $\Lm$, since we will be dealing specifically with
multiloop algebras at that point.)

If $\cB = \oplus_{\lm\in\Lm}$ is a $\Lm$-graded algebra we let
$\supp_\Lm(\cB) = \set{\lm\in\Lm\suchthat \cB^\lm\ne 0}$ denote the $\Lm$-\emph{support} of $\cB$.
If $\cB$ is a $\Lm$-graded algebra and $\cB'$ is a
$\Lm'$-graded algebra, we say that $\cB$ and $\cB'$ are \textit{isograded-isomorphic}
if there exists an algebra isomorphism $\ph: \cB \to \cB'$ and a group
isomorphism $\ph_\mathrm{gr} : \Lm \to \Lm'$ such that
$\varphi(\cB^\lm) = \cB'^{\ph_\mathrm{gr}(\lm)}$ for $\lm\in \Lm$.  If
$\langle\supp_\Lm(\cB)\rangle= \Lm$, then $\ph_\mathrm{gr}$
is uniquely determined by $\ph$.

If $\cL$ is a Lie algebra, we denote the \emph{centre} of $\cL$ by $Z(\cL)$, and we call
$\cL$ \emph{centreless} if $Z(\cL) = 0$.

If $S$ is a subset of a group, we denote the subgroup
generated by $S$ as $\langle S \rangle$.

\section{Lie tori}
\label{sec:Lietori}
\subsection{Definitions and basic properties}
\label{subsec:Lietoribasic}
We begin by recalling the definition of a Lie torus and describing  some of its properties.
In order to discuss Lie tori we first establish some conventions
and notation for root systems.

\begin{convention}  As in \cite{AABGP}, \cite{AG} and \cite{N1}
it will be convenient
for us to work with root systems that contain 0.
So by a \textit{finite irreducible root system} we will mean
a finite subset $\De$ of a finite dimensional vector space
$\cX$  over $k$ such that $0\in \De$ and
$\Dec := \De \setminus\set{0}$ is
a finite irreducible root system in $\cX$ in the usual sense
(see \cite[chap.~VI, \S 1, D\'efinition~1]{B1}).
\end{convention}

\emph{Assume  for the rest of this paper (unless indicated
to the contrary)  that $\De$ is a finite irreducible root system in
a finite dimensional vector space $\cX$ over $k$.}  Thus, $\De$ has
one of the following types:
\begin{equation}
\label{eq:type}
\mathrm{A}_\ell\, (\ell \ge 1), \mathrm{B}_\ell\,  (\ell \ge 2),
\mathrm{C}_\ell\,  (\ell \ge 3), \mathrm{D}_\ell\,  (\ell \ge 4), \mathrm{E}_6, \mathrm{E}_7,
\mathrm{E}_8, \mathrm{F}_4, \mathrm{G}_2, \mathrm{BC}_\ell\,  (\ell \ge 1)
\end{equation}
\cite[chap.~VI, \S 4]{B1}.  It will sometimes be convenient to say that $\De$
has type B$_\ell$ with $\ell \ge 1$, which means that $\De$ has type
A$_1$ or B$_\ell$  ($\ell \ge 2)$. Recall that $\De$ is  said to be
\textit{reduced} if $2 \al \notin \Dec$ for $\al \in \Dec$.  Then,
$\De$ is not reduced if and only if $\De$ is of type BC$_\ell$,
where  $\ell \ge 1$.

\begin{notation}
\label{not:rootsystem}
\emph{We will use  the following notation  for the root system $\De$.}
Let
\[Q = Q(\De) := \spann_\bbZ(\De)\]
be the \emph{root lattice} of $\De$.
Let $\cX^*$ denote the dual space of $\cX$,
and let $\langle\ ,\ \rangle : \cX\times \cX^* \to k$
denote the natural pairing of
$\cX$ with $\cX^*$.
If $\al\in \Dec$, $\al^\vee$
will denote the \textit{coroot} of $\al$ in $\cX^*$; that is,
$\al^\vee$ is the unique element of $\cX^*$ such that
$\langle \al,\al^\vee\rangle = 2$ and the map
$\beta \to \beta - \langle \beta,\al^\vee\rangle \al$
stabilizes $\De$.
Let
\[\Desh = \text{set of short roots in } \De;\]
that is $\Desh$ is the set of nonzero roots of minimum length in $\De$.
Let
\[\Deic := \set{\al\in \Dec \suchthat
\textstyle \frac 12 \al\notin \De}\]
denote the set of \textit{indivisible}
roots in $\De$, and let
\[\Dei := \Deic\cup\set{0}.\]
Going in the reverse direction, we can \emph{enlarge} $\De$ by setting
\[\Deen = \left\{
\begin{array}{ll}
    \De\cup2\Desh, & \hbox{if $\De$ has type B$_\ell$, $\ell \ge 1$;} \\
    \De, & \hbox{otherwise.} \\
\end{array}%
\right.\]
Note that both $\Dei$ and $\Deen$ are irreducible root systems
in $\cX$,  and $\Dei$
is reduced.
\end{notation}

\begin{remark}
\label{rem:isocoroot}
If $\De$ is a finite irreducible root system in $\cX$
and $\al\to \tilde\al$ is a $k$-linear isomorphism of $\cX$ onto
a vector space $\tilde\cX$, then
$\widetilde{\De} := \set{\tilde\al \suchthat \al\in \De}$ is a finite
irreducible root system in $\tilde\cX$ and we have
\begin{equation}
\label{eq:isoroot} \langle \al , \beta^\vee\rangle = \langle
\tilde\al , \tilde\beta^\vee\rangle
\end{equation}
for $\al\in \De$ and $\beta\in\Dec$,
where $\tilde\beta^\vee$ is the coroot of $\tilde\beta$
in~$\tilde\cX^*$.
\end{remark}

\begin{example}
\label{def:rootspace}
Suppose  that $\fh$ is a finite dimensional ad-diagonalizable subalgebra of
a Lie algebra $\fg$.  Let
\[\fg_\al = \set{ x \in \fg \suchthat [h,x] = \al(h)x \text{ for } h\in \fh}.\]
Then,
we have the decomposition $\fg = \oplus_{\al\in \fh^*}\fg_\al$, called the \emph{root space
decomposition} of $\fg$ relative to the adjoint action of $\fh$.  We set
\[\De(\fg,\fh) = \set{\al\in \fh^* \suchthat \fg_\al \ne 0}.\]
In many cases, although not in general, $\De(\fg,\fh)$ is a finite  irreducible
root system in $\fh^*$.
In particular, if $\fg$ is finite dimensional split simple
and $\fh$ is a splitting Cartan subalgebra of $\fg$,
then $\De(\fg,\fh)$ is a reduced finite irreducible  root system in $\fh^*$
\cite[\S IV.1]{J}, \cite[chap.~VIII, \S 2, th.~2]{B2}.
\end{example}

Lie tori are Lie algebras that are graded by the direct product
$Q\times \Lm$ and satisfy some additional  axioms that we are going
to recall.  To do this we first
introduce some  notation
for $Q\times \Lm$-graded  algebras.

\begin{notation}
\label{not:doublegraded}
Let
\[\cL = \oplus_{(\al,\lm)\in Q\times\Lm}\dL{\al}{\lm}\]
be a $Q\times\Lm$-graded algebra.   (It is convenient
as in \cite{AG}, \cite{Y2} and \cite{N1} to use the notation
$\dL{\al}{\lm}$ rather than $\cL^{(\al,\lm)}$ or $\cL_{(\al,\lm)}$
for the space of elements of degree $(\al,\lm)$ in $\cL$.)
Then
$\cL = \oplus_{\lm \in \Lm} \cL^\lm$ is
$\Lm$-graded and $\cL = \oplus_{\al \in Q} \cL_\al$ is $Q$-graded algebra,
with
\[\cL^\lm = \oplus_{\al\in Q} \dL{\al}{\lm} \text{ for } \lm\in \Lm
\quad \andd \quad \cL_\al = \oplus_{\lm\in \Lm} \dL{\al}{\lm} \text{
for }\al\in Q,\]
and we have
\[\dL{\al}{\lm} = \cL_\al \cap \cL^\lm.\]
Conversely if $\cL$ has a $Q$-grading and a $\Lm$-grading
that are compatible (which means that each $\cL_\al$ is a $\Lm$-graded
subspace of $\cL$ or equivalently that each $\cL^\lm$ is a $Q$-graded subspace of $\cL$),
then $\cL$ is $Q\times \Lm$-graded with
$\dL{\al}{\lm} = \cL_\al \cap \cL^\lm$.
Hence, a $Q\times \Lm$-graded algebra $\cL$
has  three different associated support sets, namely the $Q\times \Lm$-support,
the $Q$-support and the $\Lm$-support denoted respectively by
$\supp_{Q\times \Lm}(\cL)$, $\supp_{Q}(\cL)$ and $\supp_{\Lm}(\cL)$.
\end{notation}

\medskip
We now recall the definition of a Lie torus.
Lie tori were introduced by Y.~Yoshii   \cite{Y2,Y3} and further studied
by E.~Neher in \cite{N1,N2}.
(See Remark \ref{rem:historyLietorus} below.)

\begin{definition}
\label{def:Lietorus}
A \textit{Lie $\Lm$-torus of type $\Delta$} is a $Q\times\Lm$-graded
Lie algebra $\cL$
over $k$ which (with the notation of \eqref{not:doublegraded}) satisfies:
\begin{itemize}
\item[(LT1)]  $\supp_Q(\cL) \subseteq \De$.

\item[(LT2)]
\begin{itemize}
\item[(i)] $(\Deic,0)\subseteq \supp_{Q\times \Lm}(\cL)$.
\item[(ii)]
If $(\al,\lm)\in \supp_{Q\times \Lm}(\cL)$ and $\al\ne 0$,
then there exist elements $e_\al^\lm\in \dL{\al}{\lm}$
and $f_\al^\lm\in \dL{-\al}{-\lm}$ such that
\[\dL{\al}{\lm} = ke_\al^\lm,\quad \dL{-\al}{-\lm} = kf_\al^\lm,\]
and
\begin{equation}
\label{eq:basic}
[[e_\al^\lm,f_\al^\lm],x_\beta] = \langle\beta,\al^\vee\rangle x_\beta
\end{equation}
for $x_\beta\in \cL_\beta$,  $\beta\in Q$.
\end{itemize}
\item[(LT3)] $\cL$ is generated as an algebra by the spaces $\cL_\al$, $\al\in \Dec$.
\item[(LT4)] $\langle\supp_\Lm(\cL)\rangle =\Lm$.
\end{itemize}
In  that case we call $Q$ the \emph{root grading group} and we call
the $Q$-grading of $\cL$  the \emph{root grading}.
Similarly, we call $\Lm$ the \emph{external grading group}  and we call
the $\Lm$-grading of $\cL$  the \emph{external grading}.
If $\De$ has type X$_\ell$,
with X$_\ell$ as in \eqref{eq:type}, we also say that \emph{$\cL$ has type
X$_\ell$}. Finally if $\Lm$ is free abelian of rank $n\ge 0$,
we say that $\cL$ has \emph{nullity} $n$.
\end{definition}

\begin{remark}
\label{rem:sl2}
Suppose that $\cL$ is a Lie $\Lm$-torus of type $\Delta$.  Unless indicated otherwise,
we  will assume that we have made a fixed choice of elements $e_\al^\lm$ and $f_\al^\lm$ as in
(LT2)(ii) for $(\al,\lm)\in \supp_{Q\times \Lm}(\cL)$ with $\al\in \Dec$.
Note that both the elements
$e_\al^\lm$ and $f_\al^\lm$ are nonzero,
since otherwise we would have $\langle\beta,\al^\vee\rangle=0$
for all $\beta\in \supp_Q(\cL)$ and hence for all $\beta\in\cX$.
Consequently the triple
$(e_\al^\lm, [e_\al^\lm,f_\al^\lm],f_\al^\lm)$
forms an $\sltwo$-triple
in~$\cL$.
\end{remark}

\begin{remark}
\label{rem:LT3}
If $\cL$  is a $Q\times \Lm$-graded algebra, then the
subalgebra of $\cL$ generated by the spaces
$\cup_{\al\in\Dec} \cL_\al$ is a
$Q\times \Lm$ graded ideal of $\cL$.  Moreover,
if $\cL$ satisfies (LT2)(i) this ideal is not zero.
Thus, if $\cL$ is a graded-simple $Q\times \Lm$-graded algebra
satisfying (LT2)(i), then (LT3) automatically holds.
\end{remark}

\begin{remark}
\label{rem:Weylaut}
If  $\cL$ is a Lie $\Lm$-torus of type $\Delta$,  $(\al,\lm)\in\supp_{Q\times\Lm}(\cL)$ and   $\al \ne 0$,
we define $\theta_\al^\lm\in \Aut_k(\cL)$  by
\[\theta_\al^\lm = \exp(\ad(e_\al^\lm))\exp(\ad(-f_\al^\lm)) \exp(\ad(e_\al^\lm)).\]
Then, as noted in \cite[\S 5]{Y3} (see the argument in \cite[Prop.~1.27]{AABGP}), we have
\begin{equation}
\label{eq:Weylaut}
\theta_\al^\lm(\cL_\beta^\mu) = \cL_{w_\al(\beta)}^{\mu-\langle \beta,\al^\vee\rangle \lm}
\end{equation}
for $(\beta,\mu)\in Q\times \Lm$, where $w_\al$ is the Weyl reflection corresponding
to  $\al\in \De$.
\end{remark}

We next examine the $Q$-support of a Lie torus.

\begin{lemma}
\label{lem:suppQ}
Let  $\cL$ be a Lie $\Lm$-torus of type $\De$.
Then
\[ \supp_Q(\cL) = \left\{
\begin{array}{ll}
     \De & \hbox{if $\De$ is reduced;} \\
     \De \text{ or } \Dei & \hbox{if $\De$ is not reduced.} \\
\end{array}
\right.\] Consequently,  $\supp_Q(\cL)$ is a finite irreducible root system in $\cX$
and
\begin{equation*}
\Dei = \supp_Q(\cL)_{\mathrm{ind}}.
\end{equation*}
\end{lemma}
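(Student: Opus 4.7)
The plan is to first pin down the containment $\Dei \subseteq \supp_Q(\cL) \subseteq \De$, and then to upgrade the lower bound using the Weyl-type automorphisms $\theta_\al^\lm$ of Remark~\ref{rem:Weylaut} to enforce $W$-invariance of the support.

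First I would check $\Dei \subseteq \supp_Q(\cL) \subseteq \De$. The upper containment is immediate from (LT1). For the lower containment, (LT2)(i) gives $\Deic \subseteq \supp_Q(\cL)$, and to see that $0 \in \supp_Q(\cL)$ I pick any $\al \in \Deic$: the bracket $[e_\al^0,f_\al^0]$ lies in $\dL{0}{0}$ and is nonzero, since by Remark~\ref{rem:sl2} it is the middle entry of an $\sltwo$-triple (equivalently, if it vanished then the relation in (LT2)(ii) with $\beta=\al$ would yield $\langle\al,\al^\vee\rangle=0$, contradicting $\langle\al,\al^\vee\rangle=2$). Hence $\dL{0}{0}\ne 0$ and so $0\in\supp_Q(\cL)$.

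Next I would prove that $\supp_Q(\cL)$ is stable under the Weyl group $W$ of $\De$. By Remark~\ref{rem:Weylaut}, for every $(\al,\lm)\in\supp_{Q\times\Lm}(\cL)$ with $\al\ne 0$ the automorphism $\theta_\al^\lm$ sends $\dL{\beta}{\mu}$ onto $\dL{w_\al(\beta)}{\mu-\langle\beta,\al^\vee\rangle\lm}$, so the reflection $w_\al$ preserves $\supp_Q(\cL)$. Applying this with $\al\in\Deic$ and $\lm=0$ (provided by (LT2)(i)), and using that the reflections $w_\al$ for $\al\in\Deic$ already generate $W$, I conclude that $\supp_Q(\cL)$ is $W$-invariant.

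Now the case split is essentially automatic. If $\De$ is reduced then $\Dei=\De$, and combining the first paragraph with this equality gives $\supp_Q(\cL)=\De$. If instead $\De$ has type $\mathrm{BC}_\ell$, then $\De\setminus\Dei=2\Desh$ consists of roots of a single length, hence forms a single $W$-orbit; $W$-invariance then forces $\supp_Q(\cL)\cap 2\Desh$ to be either empty or all of $2\Desh$, so $\supp_Q(\cL)\in\{\Dei,\De\}$. For the consequence: both $\De$ and $\Dei$ are finite irreducible root systems in $\cX$ by the remarks following Notation~\ref{not:rootsystem}, and they share the same indivisible part $\Dei$, so in either case $\supp_Q(\cL)_{\mathrm{ind}}=\Dei$. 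There is really no serious obstacle in the argument; the only nontrivial input is the single-$W$-orbit structure of $2\Desh$ in type $\mathrm{BC}_\ell$, and that is a standard fact since $W$ acts transitively on the roots of any fixed length.
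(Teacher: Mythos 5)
Your proof is correct and follows essentially the same route as the paper's: establish $\Dei\subseteq\supp_Q(\cL)\subseteq\De$ from (LT1), (LT2)(i), and the nonvanishing of $[e_\al^0,f_\al^0]$, then use the Weyl-group invariance of $\supp_Q(\cL)$ coming from \eqref{eq:Weylaut} to settle the nonreduced case. You merely spell out the last step (single $W$-orbit of $2\Desh$) that the paper leaves implicit.
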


\begin{proof} By (LT1) and (LT2)(i), we have $\Deic  \subseteq \supp_Q(\cL) \subseteq \De$.
Moreover $0\in \supp_Q(\cL)$ since $0\ne [e_\al^0,f_\al^0] \in \cL_0$ for
$\al\in \Deic$.  Thus
\begin{equation}
\label{eq:supptrapped}
\Dei  \subseteq \supp_Q(\cL) \subseteq \De.
\end{equation}
If $\De$ is reduced then $\De = \Dei$ which implies our conclusion.  Suppose that $\De$ is not reduced.
Now $\supp_Q(\cL)$ is closed under the action of the
Weyl group of $\De$ by  \eqref{eq:Weylaut}.
This fact together with \eqref{eq:supptrapped} implies our conclusion.
\end{proof}

\begin{remark}
\label{rem:LT5}
Suppose that $\cL$ is a Lie $\Lm$-torus of type $\De$.
It is sometimes convenient to assume the following additional axiom:
\begin{itemize}
\item[(LT5)]  $\supp_Q(\cL) = \De$.
\end{itemize}
Note that if $\cL$ does not satisfy (LT5), then by
Lemma \ref{lem:suppQ} it follows that $\De$ is not reduced
and $\supp_Q(\cL) = \Dei$.  In that case $\cL$
is also a Lie $\Lm$-torus of type $\Dei$ and as such
it satisfies (LT5).  Thus there is really no loss of generality
in assuming (LT5)  when convenient.
\end{remark}

If $\cL$ is a Lie $\Lm$-torus of type $\De$, we define
\begin{equation}
\label{eq:semilattice}
\sem_\al = \set{\lm\in\Lm \suchthat \dL{\al}{\lm} \ne 0}
\end{equation}
for $\al\in Q$.  Thus,
\[\supp_\Lm(\cL) = \textstyle\bigcup_{\al\in Q} \sem_\al,\]
and $\sem_\al \ne \emptyset$ if and only if $\al\in \supp_Q(\cL)$.
Note that if $\al,\beta\in \supp_Q(\cL)$ and $\alpha \ne 0$,
we have, by \eqref{eq:Weylaut},
\begin{equation}
\label{eq:semilatticeprop}
\sem_\beta - \langle \beta,\al^\vee\rangle \sem_\al \subseteq \sem_{w_\al(\beta)}.
\end{equation}

Using \cite{Y3}, we have the following  properties of the sets $\sem_\al$.  (See
also \cite[Chapter II]{AABGP}.)

\begin{lemma}\label{lem:suppLie}
Suppose that $\cL$ is a Lie $\Lm$-torus of type $\Delta$.
\begin{itemize}
\item[(i)] If $\al\in \Deic$, then $0\in \sem_\al$.
\item[(ii)] If $\al\in \supp_Q(\cL)^\times$, then
$\sem_\al + 2\sem_\al \subseteq \sem_\al$ and $-\sem_\al = \sem_\al$.
\item[(iii)] If $\al$, $\beta \in \supp_Q(\cL)^\times$ have the same length, then
$\sem_\al = \sem_\beta$.
\item[(iv)] If $\al\in\supp_Q(\cL)^\times$ and
$\beta\in \Desh$, then $\sem_\al\subseteq \sem_\beta$.
\item[(v)] If $\beta\in \Desh$, then
$\supp_\Lm(\cL)= \sem_0 = \sem_\beta + \sem_\beta$,   $\Lm = \langle \sem_\beta\rangle$,
$\sem_\beta + 2\Lm \subseteq \sem_\beta$ and $2\Lm \subseteq \sem_\beta$.
\item[(vi)] $2\Lm \subseteq \supp_Q(\cL)$.
\end{itemize}
\end{lemma}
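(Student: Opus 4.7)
Part (i) follows immediately from axiom (LT2)(i). Part (ii) I will derive from (iii), so let me address (iii) first. For (iii), the Weyl group of any irreducible finite root system acts transitively on roots of a fixed length; writing a Weyl element taking $\al$ to $\beta$ as a product $s_{\gm_k}\cdots s_{\gm_1}$ of reflections in \emph{indivisible} roots (possible since $s_\gm=s_{\gm/2}$ when $\gm$ is divisible), and noting that $0\in\sem_{\gm_i}$ by (i), the composite automorphism $\theta_{\gm_k}^0\cdots\theta_{\gm_1}^0$ preserves the $\Lm$-grading by \eqref{eq:Weylaut} and sends $\cL_\al$ bijectively onto $\cL_\beta$, forcing $\sem_\al=\sem_\beta$. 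Returning to (ii): the identity $\sem_{-\al}=-\sem_\al$ is immediate from the nonvanishing of $f_\al^\lm$ in (LT2)(ii), and combined with $\sem_\al=\sem_{-\al}$ from (iii) it yields $-\sem_\al=\sem_\al$. The inclusion $\sem_\al+2\sem_\al\subseteq\sem_\al$ then follows by applying $\theta_\al^\lm$ from Remark \ref{rem:Weylaut} to $\dL{-\al}{\mu}$, which lands in $\dL{\al}{\mu+2\lm}$, and taking $\mu\in\sem_{-\al}=\sem_\al$ (after relabeling).

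For (iv), I will reduce via (iii) to the case $\al$ non-short, then choose $\gm\in\Dec$ with $\langle\al,\gm^\vee\rangle\neq 0$ and iterate the containment \eqref{eq:semilatticeprop} with appropriate reflections until reaching a short root, using (ii) and (iii) to convert the resulting inclusion into the desired $\sem_\al\subseteq\sem_\beta$.

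Part (v) is the technical heart. For $\sem_\beta+\sem_\beta\subseteq\sem_0$, I will show $[e_\beta^\mu,f_\beta^\nu]\in\dL{0}{\mu-\nu}$ is nonzero by a Jacobi calculation using \eqref{eq:basic}, handling the non-reduced case separately since $\dL{2\beta}{\mu+\nu}$ may contribute extra terms in type BC. For the reverse inclusion $\supp_\Lm(\cL)\subseteq\sem_\beta+\sem_\beta$, I will use (LT3) to write a homogeneous element of $\cL^\lm$ as an iterated bracket of pieces $\dL{\al_i}{\mu_i}$ with $\al_i\in\Dec$; by (iv) each $\mu_i\in\sem_\beta$, and the closure relations from (ii) together with $0\in\sem_\beta$ (by (i), since $\Desh\subseteq\Deic$) let me collapse the sum to two $\sem_\beta$-summands. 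Then $\Lm=\langle\sem_\beta\rangle$ is immediate from (LT4), and the inclusions $\sem_\beta+2\Lm\subseteq\sem_\beta$ and $2\Lm\subseteq\sem_\beta$ follow by iterating (ii), using $0\in\sem_\beta$ and $\Lm=\langle\sem_\beta\rangle$. Finally (vi), read as $2\Lm\subseteq\supp_\Lm(\cL)$, is immediate from the last inclusion of (v). The main obstacle will be the nonvanishing of $[e_\beta^\mu,f_\beta^\nu]$ in (v), where the $\sltwo$-bookkeeping is most delicate in type BC.
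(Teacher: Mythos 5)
Your proposal takes a genuinely different route from the paper: the paper disposes of (ii), (iii), (iv), and the equality $\sem_0 = \sem_\beta + \sem_\beta$ in (v) by citing Yoshii's combinatorial theory of ``extension data'' (\cite[\S 3 and Thm.~5.1]{Y3}), appealing to the fact that \eqref{eq:semilatticeprop} holds; you instead propose to reprove everything from scratch inside the Lie torus, using the automorphisms $\theta_\al^\lm$ from Remark \ref{rem:Weylaut} together with $\sltwo$-theory. That is a legitimate and instructive alternative --- it keeps the argument self-contained --- and your arguments for (i), (ii), (iii) are correct as sketched. For (iv) the idea is also correct, though ``iterate the containment until reaching a short root'' is vaguer than needed: the clean step is to produce, for any non-short $\al\in\supp_Q(\cL)^\times$, a short $\gm$ with $\langle\gm,\al^\vee\rangle=\pm1$ (this requires a brief rank-two inspection covering B, C, F$_4$, G$_2$ and BC), and then \eqref{eq:semilatticeprop} combined with $0\in\sem_\gm$ from (i) and parts (ii), (iii) gives $\sem_\al\subseteq\sem_\beta$ in one stroke.

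There are two points in (v) you should fix. First, the inclusion $\supp_\Lm(\cL)\subseteq\sem_\beta+\sem_\beta$: it does \emph{not} follow from (ii) and $0\in\sem_\beta$ that a sum of several elements of $\sem_\beta$ ``collapses'' to a sum of two --- the containment $\sem_\beta+\sem_\beta+\sem_\beta\subseteq\sem_\beta+\sem_\beta$ is not an algebraic consequence of $\sem_\beta+2\sem_\beta\subseteq\sem_\beta$. The correct observation is that by (LT3) and the $Q$-grading, $\dL{0}{\lm}=\sum_{\al\in\Dec}\sum_{\mu}[\dL{\al}{\mu},\dL{-\al}{\lm-\mu}]$ (peel off the outermost bracket of any generating word, which necessarily has a nonzero first index), so one never encounters more than two summands; each $\mu\in\sem_\al\subseteq\sem_\beta$ and $\lm-\mu\in\sem_{-\al}\subseteq\sem_\beta$ by (iv), done. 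For $\al\ne0$ one has $\sem_\al\subseteq\sem_\beta\subseteq\sem_\beta+\sem_\beta$ directly from (iv) and $0\in\sem_\beta$. Second, for the forward inclusion $\sem_\beta+\sem_\beta\subseteq\sem_0$, the nonvanishing of $[e_\beta^\mu,f_\beta^\nu]\in\dL{0}{\mu-\nu}$ is not really a ``Jacobi calculation'' and your worry about extra contributions from $\dL{2\beta}{\cdot}$ in type BC is a red herring, since this bracket lands in $\cL_0$ regardless of whether $\De$ is reduced. What one actually uses is that $e_\beta^\mu$ has weight $2$ under $\ad([e_\beta^\nu,f_\beta^\nu])$ by \eqref{eq:basic}, that $\ad(e_\beta^\nu)$ and $\ad(f_\beta^\nu)$ are locally nilpotent so $\cL$ is a sum of finite-dimensional $\sltwo$-modules, and that the lowering operator is injective on strictly positive weight spaces; hence $[f_\beta^\nu,e_\beta^\mu]\ne0$. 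With these two repairs the from-scratch proof goes through and gives the same conclusions as the citation-based proof in the paper.
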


\begin{proof} (i) follows from (LT2)(i).  Because
\eqref{eq:semilatticeprop} holds, we may invoke the results of \cite[\S 3]{Y3},
which gives (ii), (iii) and (iv).  For (v), the equality
$\sem_0 = \sem_\beta + \sem_\beta$ is proved in \cite[Thm.~5.1]{Y3}.
Then, since $\supp_\Lm(\cL) = \bigcup_{\al\in Q} \sem_\al$, we see using (iv) and (i) that
$\supp_\Lm(\cL) = \sem_0$.
Hence, since  $\Lm = \langle \supp_\Lm(\cL) \rangle$, we have $\Lm = \langle \sem_\beta\rangle$.
Thus, by (ii) (applied to $\al = \beta$), we have $\sem_\beta + 2\Lm \subseteq \sem_\beta$.
Since $0\in \sem_\beta$, this gives $2\Lm \subseteq \sem_\beta$.
Finally, for (vi), choose $\beta\in \Desh$. Then,
$2\Lm \subseteq \sem_\beta \subseteq \supp_\Lm(\cL)$.
\end{proof}

\subsection{The root grading pair}
\label{subsec:gradingpair}

\begin{definition}
\label{def:gradingpair}
Let $\cL$ be a Lie $\Lm$-torus of type $\Delta$.
Following \cite{N1}, we define the
\textit{root grading pair} for $\cL$ to be the pair $(\fg,\fh)$
of subalgebras of $\cL$ where $\fg$ is
the subalgebra of $\cL$ generated by $\set{\dL{\al}{0}}_{\al\in\Dec}$
and $\fh  = \sum_{\al\in\Dec} [\dL{\al}{0},\dL{-\al}{0}]$.
Notice that $\fh$ is a subalgebra of $\fg$,
$\fg \subseteq \cL^0$ and $\fh \subseteq \dL{0}{0}$.
We will see below in
\eqref{eq:Lroot} that
the subalgebra $\fh$ completely determines the   root grading
of $\cL$.
\end{definition}

The
next proposition summarizes the basic properties of
the root grading pair of a Lie torus.  Parts (i)--(v) were announced
in \cite[\S 3]{N1} in the case when $\Lm$  is a finitely generated free abelian group.
We sketch a proof of the proposition for the
convenience of the reader.  If $\cL$ is any Lie algebra,
we denote the \textit{centre} of $\cL$ by $Z(\cL)$.

\begin{proposition}
\label{prop:Lietorusbasic}
Let  $\cL$ be a Lie $\Lm$-torus of type $\Delta$ with root grading
pair~$(\fg,\fh)$.
\begin{itemize}
\item[(i)] If $\al\in\Deic$, then $\dL{2\al}{0} = 0$.
\item[(ii)]  $\fg$ is a finite dimensional split simple
Lie algebra with splitting Cartan subalgebra~$\fh$.
\item[(iii)]  There is a unique linear isomorphism
$\al \to \tilde\al$ of $\cX$ onto $\fh^*$
such that  $\widetilde{\Dei} = \De(\fg,\fh)$
and
\begin{equation}
\label{eq:efvee0}
[e_\al^0,f_\al^0] = \tilde\al^\vee
\end{equation}
for $\al\in\Deic$.
(See Remark \ref{rem:isocoroot} for the notation.  Here $\tilde\al^\vee\in (\fh^*)^* = \fh$.)
\item[(iv)] If $\al\in Q$ then
\begin{equation}
\label{eq:Lroot}
\cL_\al = \set{x\in \cL \suchthat [h,x] = \tilde\al(h) x \text{ for } h\in \fh}.
\end{equation}
\item[(v)] If $(\al,\lm)\in \supp_{Q\times \Lm}(\cL)$ and $\al\in\Dec$ then
\begin{equation}
\label{eq:efvee}
[e_\al^\lm,f_\al^\lm] - \tilde\al^\vee \in Z(\cL).
\end{equation}
\item[(vi)] We have
\[\cL^0 = \fg \oplus \left(Z(\cL)\cap \cL^0\right) \andd
\dL{0}{0} = \fh \oplus \left(Z(\cL)\cap \cL^0\right),\]
so
\[\fg = [\cL^0, \cL^0]
\andd
\fh = [\cL^0, \cL^0] \cap \cL_0.\]
Consequently, if $\cL$  is centreless, then $\cL^0 = \fg$ and $\cL_0^0 = \fh$.
\end{itemize}
\end{proposition}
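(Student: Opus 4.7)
The plan is to handle the parts in the order (ii)--(iii), then (i), (iv), (v), and finally (vi), since (vi) is the most involved.

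For (ii) and (iii), restricting (LT2)(ii) to $\lm=0$ shows that $\dim\dL{\al}{0}=1$ for $\al\in\Deic$ and that $h_\al:=[e_\al^0,f_\al^0]$ acts on each $\dL{\beta}{\mu}$ by the scalar $\langle\beta,\al^\vee\rangle$. Hence $\fh := \spann_k\{h_\al : \al\in\Deic\}$ is an abelian ad-diagonalizable subalgebra. Defining $\tilde\al\in\fh^*$ by $\tilde\al(h_\beta)=\langle\al,\beta^\vee\rangle$ produces a linear map $\cX\to\fh^*$; the $h_\al$ indexed by a base $\Pi\subset\Deic$ are linearly independent by invertibility of the Cartan matrix of $\Dei$, so $\dim\fh=\dim\cX$, the map $\al\mapsto\tilde\al$ is an isomorphism, and $h_\al=\tilde\al^\vee$. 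The elements $\{e_{\al_i}^0, h_{\al_i}, f_{\al_i}^0\}_{\al_i\in\Pi}$ then satisfy Chevalley and Serre relations---the iterated bracket $(\ad e_{\al_i}^0)^{1-\langle\al_j,\al_i^\vee\rangle}e_{\al_j}^0$ has $Q$-degree outside the $\al_i$-string through $\al_j$ in $\Dei$, landing in $\dL{\gm}{0}$ with $\gm\notin\Dei$, and such spaces vanish (invoking (i) when $\gm$ is divisible)---so Serre's theorem identifies $\fg$ with the split simple Lie algebra of type $\Dei$ and splitting Cartan $\fh$.

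Part (i) is a short Jacobi computation: if $0\ne e\in\dL{2\al}{0}$ with $\al\in\Deic$, then $[e_\al^0,e]\in\dL{3\al}{0}=0$ (since $3\al\notin\De$), so $[e_\al^0,[f_\al^0,e]] = [h_\al,e] = 4e$; but $[f_\al^0,e]\in\dL{\al}{0}=ke_\al^0$, making the left-hand side a multiple of $[e_\al^0,e_\al^0]=0$---contradiction. Part (iv) follows by decomposing any eigenvector $x$ along the $Q$-grading and using injectivity of the map $\al\mapsto\tilde\al$. For (v), both $\ad[e_\al^\mu,f_\al^\mu]$ and $\ad\tilde\al^\vee$ act on every $\cL_\beta$ by $\langle\beta,\al^\vee\rangle$---the former by (LT2)(ii), the latter from the definition of the coroot in $\widetilde{\De}\subseteq\fh^*$---so their difference is ad-trivial, hence central.

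The main obstacle is (vi). Directness of $\fg\oplus (Z(\cL)\cap\cL^0)$ is automatic since $Z(\cL)\cap\fg\subseteq Z(\fg)=0$; and since $\dL{\al}{0}\subseteq\fg$ for all $\al\ne 0$ (using (i) for $\al\in\Dec\setminus\Deic$), the content is $\dL{0}{0}\subseteq \fh + Z(\cL)$. Given $x\in\dL{0}{0}$, $\ad(x)$ preserves the double grading and acts on each one-dimensional $\dL{\al}{\mu}$ ($\al\ne 0$) by a scalar $c(\al,\mu)$. Granting that $c(\al,\mu)$ depends only on $\al$, the function $\al\mapsto c(\al,0)$ on $\Deic$ extends by linearity (via Jacobi on nonzero brackets) to a functional on $\cX$, which through the isomorphism $\al\mapsto\tilde\al$ comes from some $h\in\fh$ with $\ad(x-h)$ annihilating every $\dL{\al}{\mu}$ with $\al\ne 0$; by (LT3) and the derivation property, $\ad(x-h)=0$, placing $x-h\in Z(\cL)$. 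The $\mu$-independence of $c(\al,\mu)$ is the real work: applying Jacobi to $[x,[e_\al^\mu,f_\al^\mu]]$---which vanishes since $[x,\tilde\al^\vee]=0$ (as $\fh$ commutes with $\cL_0$ by (iv)) and $[x,z_\al^\mu]=0$ (using $z_\al^\mu\in Z(\cL)$ from (v))---yields $c(\al,\mu)+c(-\al,-\mu)=0$, and combining this with the automorphisms $\theta_\al^\lm$ of Remark~\ref{rem:Weylaut}, which conjugate $x$ to elements of $\dL{0}{0}$ whose $c$-scalars are shifted along $\Lm$ in a controlled way, should pin down $c(\al,\mu)=c(\al,0)$ for all $\mu\in\sem_\al$.
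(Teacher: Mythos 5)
Your treatment of parts (i)--(v) is essentially the paper's argument: one-dimensionality of $\dL{\al}{0}$, the Chevalley--Serre relations for a base $\Pi$ of $\Dei$, Serre's theorem (Bourbaki Ch.~VIII, \S4, th.~1), identification of $\cL_\al$ as the $\tilde\al$-eigenspace of $\ad\fh$, and the Jacobi computation giving (v). One ordering caveat: you announce (i) as coming \emph{after} (ii)--(iii), yet invoke it in the middle of proving them; prove (i) first, as the paper does. The computation you give for (i) is fine, and in fact the Serre relations need no appeal to (i) at all if you argue as the paper does, via $\sltwo$-theory for the triple $(e_{\al_i}^0,h_i,f_{\al_i}^0)$ acting on $e_{\al_j}^0$, using $\al_j-\al_i\notin\De$ rather than trying to locate the target degree.

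The genuine gap is in (vi). After subtracting a suitable $h\in\fh$, everything turns on showing that the scalar $c(\al,\mu)$ by which $\ad x$ acts on $\dL{\al}{\mu}$ is independent of $\mu$, and you do not prove this; you say the automorphisms $\theta_\al^\lm$ ``should pin down'' the $\mu$-independence. They do not, at least not by the route you indicate, because $\theta_\al^\lm$ does not fix $x$: what the conjugation gives is a relation between the scalars of $x$ and the scalars of the \emph{different} element $x':=\theta_\al^\lm(x)\in\dL{0}{0}$, namely $c'(w_\al\beta,\,\nu-\langle\beta,\al^\vee\rangle\lm)=c(\beta,\nu)$, and you have no way to compare $c$ and $c'$ without already knowing the structure of $\dL{0}{0}$, which is the thing being proved. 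The antisymmetry $c(\al,\mu)+c(-\al,-\mu)=0$ you derive is correct but far from sufficient: a function of the shape $c(\al,\mu)=d_\al+f(\mu)$ with $f$ odd and $d_\al+d_{-\al}=0$ satisfies it while still depending on $\mu$. The paper avoids this entirely by using (LT3) (together with the one-dimensionality of the $\dL{\al}{\lm}$, $\al\ne 0$) to reduce brackets to length two, yielding
\[\dL{0}{0} = \sum_{\al\in\Dec}\ \sum_{\lm\in\Lm}\bigl[\dL{\al}{\lm},\dL{-\al}{-\lm}\bigr],\]
and then each summand lands in $\fh+Z(\cL)$ by your own (v), so $\dL{0}{0}\subseteq\fh+Z(\cL)\cap\cL^0$ drops out immediately. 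Replace the eigenvalue analysis with this direct application of (LT3) and (v); as stated, your proof of (vi) is incomplete.
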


\begin{proof}
(i):  This is easily proved by contradiction
using the representation theory
of the $\sltwo$-triple
$(e_\al^0, [e_\al^0,f_\al^0],f_\al^0)$
and the fact that
$[\dL{\al}{0},\dL{-\al}{0}] = k[e_\al^0,f_\al^0]$.  (See for example,
the proof of Theorem 1.29(c) of \cite[Prop.~6.3]{AABGP}.)

(ii)--(v): Uniqueness in (iii) is clear, so we need to prove
the other properties in (ii)--(v).
Let $\Pi =\set{\al_1,\dots,\al_\ell}$ be a base for the root system
$\Dei$.
Let
\[e_i = e^0_{\al_i}, \quad f_i = f^0_{\al_i} \andd h_i = [e_i,f_i]\]
for $1\le i\le \ell$.  Let $\tilde\fg$ be the subalgebra of $\fg$ generated
by the elements
$\set{e_i,f_i}_{i=1}^\ell$,
and let $\tilde\fh$ be the subspace
of $\fh$ spanned by $\set{h_i}_{i=1}^\ell$.
(We will see below that $\tilde\fg = \fg$ and $\tilde\fh = \fh$.)
It is easy to check using \eqref{eq:basic}
that the elements $\set{e_i,f_i,h_i}_{i=1}^\ell$ satisfy the
Chevalley-Serre
relations determined by the Cartan matrix
$(\langle \al_i, \al_j^\vee\rangle)$.  Hence,
by  \cite[chap.~VIII, \S 4, th.~1]{B2},
$\tilde\fg$
is a finite dimensional split simple Lie algebra
with splitting Cartan subalgebra $\tilde\fh$,
and there is a linear isomorphism
$\al\to \tilde\al$ such that
$\widetilde{\Dei} = \De(\tilde\fg,\tilde\fh)$  and
\begin{equation}
\label{eq:veehi}
\tilde\al(h_i) = \langle \al,\al_i^\vee\rangle
\end{equation}
for $i=1,\dots,\ell$.

Now we set
$\cL_{\tilde\al} = \set{x\in \cL \suchthat [h,x] = \tilde\al(h)x \text{ for } h\in\tilde\fh}$
for $\al\in Q$.  Then using \eqref{eq:basic} and \eqref{eq:veehi}
 one checks that $\cL_\al \subseteq \cL_{\tilde\al}$
for $\al\in Q$.  Hence since $\cL = \oplus_{\al\in Q} \cL_\al$ and
since the sum $\oplus_{\al\in Q} \cL_{\tilde\al}$ is direct we have
$\cL_\al = \cL_{\tilde\al}$ for $\al\in Q$. So
\begin{equation}
\label{eq:Ltilderoot}
\cL_\al = \set{x\in \cL \suchthat [h,x] = \tilde\al(h)x \text{ for } h\in\tilde\fh}
\end{equation}
for $\al\in Q$.

Next if $\al\in \Deic$, then $\tilde\al$ is a nonzero root of $\tilde\fg$ relative to
$\tilde\fh$, so, by \eqref{eq:Ltilderoot}, we have
$\tilde\fg\cap\cL_\al \ne 0$. Hence
$\tilde\fg\cap\dL{\al}{0} \ne 0$, so, by the 1-dimensionality of $\dL{\al}{0}$,
we have
$\dL{\al}{0}\subseteq \tilde\fg$ for $\al\in \Deic$.  But if
$\al\in \Dec\setminus\Deic$ we have $\dL{\al}{0} = 0$ by (i).  Hence
$\dL{\al}{0}\subseteq \tilde\fg$ for $\al\in \Dec$, so $\fg \subseteq \tilde\fg$.
But by definition  we have $\tilde\fg\subseteq\fg$, so $\tilde\fg = \fg$.
Consequently, $\tilde\fh \subseteq\fh \subseteq \tilde\fg$
and, by \eqref{eq:Ltilderoot}, $\fh$ centralizes $\tilde\fh$.  So $\tilde\fh
= \fh$.

It remains to show \eqref{eq:efvee0} and \eqref{eq:efvee}. Since
$\fg\cap Z(\cL) = 0$, it suffices to show \eqref{eq:efvee}.  For
this  let $(\al,\lm)\in \supp_{Q\times \Lm}(\cL)$ with $\al\in\Dec$.
Then using \eqref{eq:isoroot}, \eqref{eq:basic} and
\eqref{eq:Ltilderoot}, one checks that $[[e_\al^\lm,f_\al^\lm] -
\tilde\al^\vee,x_\beta^\mu] = 0$ for $\beta\in\De$, $\mu\in \Lm$ and
$x_\beta^\mu\in \dL{\beta}{\mu}$. This proves~\eqref{eq:efvee}.

(vi): It
suffices to prove that
$\cL^0 = \fg + Z(\cL)\cap \cL^0$ and
$\dL{0}{0} = \fh + Z(\cL)\cap \cL^0$.
In  fact it's enough to prove the first of these
equations since
$\dL{0}{0}$ is the centralizer
in $\cL^0$ of $\fh$ (by \eqref{eq:Lroot}).
To do this it's enough to show that $\dL{\al}{0} \subseteq \fg + Z(\cL)$
for $\al\in \De$.  But this is true by definition of $\fg$ if $\al\ne 0$.
On the other hand, using (LT3) and  \eqref{eq:efvee}, we have
$\dL{0}{0} =
\sum_{\al\in\Dec}\sum_{\lm\in\Lm} [\dL{\al}{\lm},\dL{-\al}{-\lm}]
\subseteq \fh + Z(\cL)$.
\end{proof}

\begin{remark}
\label{rem:historyLietorus}
As noted already,  Lie $\Lm$-tori were
first defined by Yoshii in \cite{Y2,Y3},
whereas the definition given above in
Definition \ref{def:Lietorus}
is due to Neher \cite{N1}.
Yoshii's original definition assumed that $\cL$
is a  \emph{root graded} Lie algebra of type $\De$
as defined in \cite[Chapter 1]{ABG}.
Yoshii and Neher's definitions are equivalent
in view of Proposition \ref{prop:Lietorusbasic}
(see \cite[\S 3]{N1}).
\end{remark}

\begin{remark}
\label{rem:nullity0}
If  $\Lm = 0$ and $\cL$ is a Lie $\Lm$-torus, then, by
Proposition \ref{prop:Lietorusbasic}~(vi), we have
$\cL = \fg \oplus Z(\cL)$ and therefore, by
(LT4) and \eqref{eq:Lroot}, we have $\cL = \fg$.
Thus, $\cL$ is a finite dimensional split simple Lie  algebra.
\end{remark}

\subsection{Fgc algebras}
\label{subsec:fgcalgebras}

A class of Lie tori of  particular interest from the point
of view of EALA theory is the class of fgc centreless Lie tori.
To discuss this class, we first recall some facts about the centroid and the fgc
condition for algebras in general (not necessarily Lie algebras).
(See \cite[\S2.2]{BN} and \cite[\S4]{ABFP} for more
details.)

\begin{definition} Suppose that $\cB$ is an algebra.
The \emph{centroid} of $\cB$ is the subalgebra
$C_k(\cB)$ of $\End_k(\cB)$ consisting of the $k$-linear endomorphisms
of $\cB$ that commute with all left and right multiplication
by elements of $\cB$.  We usually write $C_k(\cB)$ simply as $C(\cB)$.
$\cB$ is said to be \emph{central-simple}
if $\cB$ is simple and $C(\cB) = k1$.
Recall that a finite dimensional simple algebra
is automatically central-simple if $k$ is algebraically closed \cite[Theorem 10.1]{J}.
\end{definition}

\begin{definition}  Suppose that $\cB = \bigoplus_{\lambda\in\Lambda}\cB^\lm$
is a $\Lm$-graded algebra.  $\cB$ is said to be \emph{graded-simple}
if $\cB\cB \ne 0$ and the only graded ideals of $\cB$ are $0$ and
$\cB$. If $\cB$ is graded-simple then $C({\cB}) =
\bigoplus_{\lambda\in\Lambda} C({\cB})^\lm$ is a unital commutative
associative $\Lm$-graded algebra, where
\[C(\cB)^\lm =
\set{c\in C(\cB) \suchthat c\cB^\mu \subseteq \cB^{\lm+\mu} \text{
for } \mu \in \Lm}\] for $\lm\in \Lm$ \cite[Proposition 2.16]{BN}.
Finally
$\cB$ is said to be \emph{graded-central-simple}
if $\cB$ is graded-simple and $C(\cB)^0 = k1$.
\end{definition}

\begin{remark}
\label{rem:gcs}
If $\cB$ is a graded-simple $\Lm$-graded algebra and
$\dim(\cB^\lm) = 1$ for some $\lm\in \Lm$, then $\cB$ is
graded-central-simple \cite[Lemma 4.3.4]{ABFP}.
\end{remark}

\begin{definition}  Suppose that $\cB = \bigoplus_{\lambda\in\Lambda}\cB^\lm$
is a graded-simple $\Lm$-graded algebra. The support $\Gm_\Lm(\cB) =
\set{\gm\in \Lm \suchthat C(\cB)^\gm \ne 0}$ of $C(\cB)$ in $\Lm$ is
a subgroup of $\Lm$ \cite[Proposition 2.16]{BN} which we call the
\textit{central grading group} of~$\cB$.  We often write
$\Gm_\Lm(\cB)$ simply as $\Gm(\cB)$.
\end{definition}

As noted in \cite[Lemmas 4.3.5 and 4.3.8]{ABFP}, we have the
following:

\begin{lemma}
\label{lem:Cdecomp}
Suppose that $\cB$ is a graded-central-simple $\Lm$-graded algebra.
Then $C(\cB)$  has a basis
$\set{c_\gm}_{\gm\in \Gm(\cB)}$ such that $c_\gm\in C(\cB)^\gm$ is a unit
of $C(\cB)$ for $\gm\in \Gm(\cB)$.  Moreover,
if $\Lm$ is finitely generated and free or if $k$ is algebraically closed,
then $C(\cB)$ is isomorphic as a $\Lm$-graded algebra to the group algebra
$k[\Gm(\cB)]$.
\end{lemma}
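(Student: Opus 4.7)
The plan is to proceed in two stages: first establish that every nonzero homogeneous element of $C(\cB)$ is invertible and that each nonzero homogeneous component is one-dimensional, and second upgrade the resulting basis to a multiplicative one in the two special cases.

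For the first stage, I would fix $\gm \in \Gm(\cB)$ and a nonzero $c\in C(\cB)^\gm$. Since $c$ commutes with all left and right multiplications by elements of $\cB$, its kernel $\ker(c)$ is an ideal of $\cB$; since $c$ is homogeneous, $\ker(c)$ is a graded ideal. By graded-simplicity and $c\ne 0$, $\ker(c) = 0$, so $c$ is injective. The image $c\cB$ is likewise a nonzero graded ideal, hence equals $\cB$, so $c$ is bijective. The set-theoretic inverse $c^{-1}$ automatically lies in $C(\cB)$: for $a,b\in\cB$, writing $a = c(a')$ gives $c(c^{-1}(ab)) = ab = c(a')b = c(a'b) = c(c^{-1}(a)b)$, and injectivity of $c$ yields $c^{-1}(ab) = c^{-1}(a)b$, with the symmetric argument on the other side. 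Since $c^{-1}$ is homogeneous of degree $-\gm$, we conclude $c^{-1}\in C(\cB)^{-\gm}$ and $c$ is a unit of $C(\cB)$. To get one-dimensionality, take two nonzero $c, c' \in C(\cB)^\gm$; then $c^{-1} c' \in C(\cB)^0 = k\cdot 1$ by graded-central-simplicity, so $c' \in kc$. Choosing one nonzero $c_\gm$ for each $\gm\in \Gm(\cB)$ gives a homogeneous basis of $C(\cB)$ consisting of units.

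For the second stage, I would show the $c_\gm$ can be rescaled so that $c_\gm c_{\gm'} = c_{\gm+\gm'}$. Since multiplication in $C(\cB)$ respects the grading and each graded piece is one-dimensional, there exist $f(\gm,\gm')\in k^\times$ with $c_\gm c_{\gm'} = f(\gm,\gm')c_{\gm+\gm'}$. Normalizing $c_0 = 1$, associativity makes $f$ into a (symmetric, since $C(\cB)$ is commutative) 2-cocycle on $\Gm(\cB)$ with values in $k^\times$, and the desired rescaling $c_\gm \mapsto g(\gm) c_\gm$ exists precisely when $f$ is a coboundary. If $\Lm$ is finitely generated and free, then $\Gm(\cB)\subseteq \Lm$ is free abelian of finite rank; picking a $\bbZ$-basis $\gm_1,\dots,\gm_r$ and defining $c_{\sum n_i \gm_i} := \prod c_{\gm_i}^{n_i}$ produces the required multiplicative basis directly, bypassing cohomology. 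If instead $k$ is algebraically closed of characteristic $0$, then $k^\times$ is divisible, so $\mathrm{Ext}^1_\bbZ(\Gm(\cB),k^\times) = 0$; and the classification of abelian extensions identifies this Ext group with the symmetric part of $H^2(\Gm(\cB),k^\times)$, which contains the class of $f$. Hence $f$ is a symmetric coboundary and the rescaling exists. In either case, sending $c_\gm$ to the basis element $t_\gm$ of $k[\Gm(\cB)]$ gives an isomorphism of $\Lm$-graded algebras.

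The main obstacle I expect is the coboundary step in the algebraically closed case, since one needs to invoke that the relevant cocycle is symmetric and to be careful that symmetric cohomology vanishes for divisible coefficients (as opposed to full $H^2$, which can be nonzero for finite abelian groups). The free case is by contrast essentially trivial once one has the homogeneous basis of units. The rest of the argument is routine graded-simple-algebra bookkeeping, driven entirely by the facts that kernels and images of homogeneous centroidal maps are graded ideals and that $C(\cB)^0 = k \cdot 1$.
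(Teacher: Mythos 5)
Your proof is correct and, as far as I can tell, follows essentially the same route as the cited source [ABFP, Lemmas 4.3.5 and 4.3.8], which the paper invokes without reproducing the argument. The first stage (nonzero homogeneous elements of $C(\cB)$ are units, each graded piece is one-dimensional once $C(\cB)^0 = k1$ is imposed) is the standard graded-Schur argument and is carried out cleanly; the only unstated detail is that $c^{-1}$ is homogeneous of degree $-\gm$, which follows immediately from $c$ restricting to a bijection $\cB^\mu \to \cB^{\mu+\gm}$ for every $\mu$. The second stage is also correct in both cases. In the free case your direct construction $c_{\sum n_i\gm_i}:=\prod c_{\gm_i}^{n_i}$ indeed bypasses any cohomological machinery, using only that a subgroup of a finitely generated free abelian group is again finitely generated and free. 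In the algebraically closed case, the point you flag as the potential obstacle is exactly right: the multiplication constants form a symmetric $2$-cocycle because $C(\cB)$ is commutative, one identifies the group of symmetric cocycles modulo coboundaries with $\mathrm{Ext}^1_\bbZ(\Gm(\cB),k^\times)$, and this vanishes because $k^\times$ is divisible (injective as a $\bbZ$-module) for $k$ algebraically closed. Your caveat that full $H^2$ need not vanish (e.g.\ $H^2((\bbZ/n)^2,k^\times)\ne 0$) while the symmetric part does is precisely the distinction one must make, and you make it correctly. No gaps.
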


\begin{definition}
\label{def:fgc}
If $\cB$ is an algebra, then $\cB$ is naturally a left $C(\cB)$-module.
(In fact if $C(\cB)$ is commutative, $\cB$ is an algebra over $C(\cB)$.)
We say that $\cB$ is \textit{fgc} if
$\cB$ is finitely generated as a $C(\cB)$-module.
\end{definition}

The following lemma is part of
\cite[Proposition 4.4.5]{ABFP}.

\begin{lemma}
\label{lem:fgc}  Suppose that $\Lm$
is finitely generated and that $\cB$ is a
graded-central-simple $\Lm$-graded algebra
so that $\ell\Lm \subseteq \supp_\Lm(\cB)$ for some  $\ell\ge 1$.
Then the following are equivalent:
\begin{itemize}
    \item[(a)] $\cB$ is fgc.
    \item[(b)] $\cB$ is a free module of finite rank over $C(\cB)$.
    \item[(c)] $\Lm/\Gm(\cB)$ is
finite and $\dim(\cB^\lm) < \infty$ for  all  $\lm\in \Lm$.
\end{itemize}
\end{lemma}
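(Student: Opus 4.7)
The plan is to leverage the explicit graded structure of $C(\cB)$ from Lemma \ref{lem:Cdecomp}: write $C(\cB) = \bigoplus_{\gm \in \Gm(\cB)} k c_\gm$ with each $c_\gm$ a homogeneous unit of degree $\gm$. A short argument---the inverse of a homogeneous unit is itself homogeneous, hence a scalar multiple of $c_{-\gm}$---shows $c_\gm c_{-\gm} \in k^\times$, so left multiplication by $c_\gm$ is a $k$-linear isomorphism $\cB^\mu \xrightarrow{\sim} \cB^{\mu+\gm}$ for every $\mu \in \Lm$. In particular $\dim_k \cB^\mu$ is constant on cosets of $\Gm(\cB)$ in $\Lm$; this observation drives everything that follows.

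The implication (b)$\Rightarrow$(a) is trivial. For (a)$\Rightarrow$(c), I would fix a finite homogeneous $C(\cB)$-generating set $b_1,\dots,b_n$ of $\cB$, with $b_i \in \cB^{\mu_i}$. Since each $C(\cB)^\gm$ is one-dimensional over $k$ for $\gm \in \Gm(\cB)$ and zero otherwise, the equality $\cB^\lm = \sum_i C(\cB)^{\lm - \mu_i} b_i$ immediately gives both $\dim_k \cB^\lm \le n$ and $\supp_\Lm(\cB) \subseteq \bigcup_i (\mu_i + \Gm(\cB))$. The hypothesis $\ell\Lm \subseteq \supp_\Lm(\cB)$ then forces $(\ell\Lm + \Gm(\cB))/\Gm(\cB)$ to be finite; since $\Lm$ is finitely generated abelian, $\Lm/\ell\Lm$ is finite as well, and $\Lm/\Gm(\cB)$, sitting as an extension of the finite group $\Lm/(\ell\Lm + \Gm(\cB))$ by the finite group $(\ell\Lm + \Gm(\cB))/\Gm(\cB)$, is finite.

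For (c)$\Rightarrow$(b), I would take representatives $\mu_1,\dots,\mu_m$ of the finitely many cosets of $\Gm(\cB)$ in $\Lm$ that meet $\supp_\Lm(\cB)$ and choose a (finite) $k$-basis $b_{i,1},\dots,b_{i,d_i}$ of each $\cB^{\mu_i}$. The collection $\{b_{i,j}\}$ is then a free $C(\cB)$-basis of $\cB$: spanning follows because $c_\gm b_{i,1},\dots,c_\gm b_{i,d_i}$ is a $k$-basis of $\cB^{\mu_i+\gm}$ by the opening observation, and the $\cB^{\mu_i+\gm}$ exhaust $\cB$ as $i$ and $\gm$ vary. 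Independence follows by expanding a putative relation in the basis $\{c_\gm\}$ of $C(\cB)$ and comparing homogeneous components: distinct $\mu_i$ lie in distinct $\Gm(\cB)$-cosets, so the degree $\mu_i + \gm$ component pins down both $i$ and $\gm$, reducing the problem to the linear independence of $\{c_\gm b_{i,j}\}_j$ in $\cB^{\mu_i+\gm}$.

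I expect the only subtle step to be the deduction in (a)$\Rightarrow$(c) that finiteness of the image of $\ell\Lm$ in $\Lm/\Gm(\cB)$ implies finiteness of $\Lm/\Gm(\cB)$ itself, for this is exactly where both hypotheses of the lemma---$\ell\Lm \subseteq \supp_\Lm(\cB)$ and finite generation of $\Lm$---interact nontrivially. The remaining parts of the argument are essentially formal consequences of the cocycle-unit structure of $C(\cB)$ supplied by Lemma \ref{lem:Cdecomp}.
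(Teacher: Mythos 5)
Your proof is correct. Be aware, though, that this lemma has no proof inside the paper: it is stated as a verbatim quotation of part of \cite[Proposition 4.4.5]{ABFP}, so there is no internal argument to compare against. Your argument is nonetheless the natural one and runs entirely on Lemma \ref{lem:Cdecomp}: since each $c_\gm\in C(\cB)^\gm$ is a homogeneous unit, multiplication by $c_\gm$ is a $k$-linear bijection $\cB^\mu\to\cB^{\mu+\gm}$, so $\dim_k\cB^\mu$ is constant on $\Gm(\cB)$-cosets, and this observation drives both nontrivial implications. One step worth making explicit in (a)$\Rightarrow$(c) is the passage from an arbitrary finite $C(\cB)$-generating set to a finite \emph{homogeneous} one; this is legitimate because $\cB$ is a graded $C(\cB)$-module, so the homogeneous components of any generating set still generate, but it deserves a sentence. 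Beyond that, the finiteness chain --- $\ell\Lm\subseteq\supp_\Lm(\cB)\subseteq\bigcup_i(\mu_i+\Gm(\cB))$ gives $(\ell\Lm+\Gm(\cB))/\Gm(\cB)$ finite, $\Lm/\ell\Lm$ finite by finite generation of $\Lm$, hence $\Lm/\Gm(\cB)$ finite --- and the (c)$\Rightarrow$(b) construction of a free $C(\cB)$-basis by translating $k$-bases of one representative component per $\Gm(\cB)$-coset by the $c_\gm$ are both complete and sound; the independence check via degrees $\mu_i+\gm$ works precisely because the $\mu_i$ lie in distinct cosets.
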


\subsection{Fgc centreless Lie tori}
\label{subsec:fgcLT}

\begin{proposition}
\label{prop:Lieprop}
Suppose that
$\cL$ is a centreless Lie $\Lm$-torus of type $\De$.
Then
\begin{itemize}
    \item[(i)] $\cL$ is a graded-central-simple $Q\times\Lm$-graded and a
    graded-central-simple $\Lm$-graded algebra.
    \item[(ii)] $\Gm_{Q\times\Lm}(\cL) = \set{0}\times \Gm_\Lm(\cL)$.
\end{itemize}
\end{proposition}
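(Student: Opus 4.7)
The plan is to establish (i) in two stages—first $Q\times\Lm$-graded-simplicity, then the $\Lm$-graded version by a short reduction—and then to compute $C(\cL)^{(0,0)}=k\cdot\id$; part (ii) together with the $\Lm$-analog of central-simplicity will fall out of one uniform observation. For $Q\times\Lm$-graded-simplicity, let $I$ be a nonzero $Q\times\Lm$-graded ideal and pick a nonzero homogeneous $x\in I\cap\dL{\al}{\lm}$. If $\al=0$, then since $Z(\cL)=0$, $x$ fails to centralize some element of $\cL$; as the centralizer of $x$ is a subalgebra, (LT3) would otherwise force $x\in Z(\cL)$, so in fact there exist $(\beta,\mu)\in\supp_{Q\times\Lm}(\cL)$ with $\beta\in\Dec$ and $[x,\dL{\beta}{\mu}]\ne 0$, and this nonzero bracket lies in $I\cap\dL{\beta}{\lm+\mu}$, reducing us to the case $\al\in\Dec$. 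Then $\dL{\al}{\lm}=ke_\al^\lm$, so $e_\al^\lm\in I$ and hence by Proposition \ref{prop:Lietorusbasic}(v) and $Z(\cL)=0$ we get $\tilde\al^\vee=[e_\al^\lm,f_\al^\lm]\in I$. Since $\tilde\al^\vee$ acts on $\dL{\beta}{\mu}$ by the scalar $\langle\beta,\al^\vee\rangle$ (Proposition \ref{prop:Lietorusbasic}(iv)), we conclude $\dL{\beta}{\mu}\subseteq I$ whenever $\langle\beta,\al^\vee\rangle\ne 0$. Propagating along a chain of pairwise non-orthogonal roots in the irreducible root system $\supp_Q(\cL)^\times$ (Lemma \ref{lem:suppQ}), we obtain $\dL{\gm}{\mu}\subseteq I$ for every $(\gm,\mu)\in\supp_{Q\times\Lm}(\cL)$ with $\gm\ne 0$; a final appeal to (LT3), which writes each $\cL_0^\mu$ as iterated brackets of such spaces, gives $I=\cL$.

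For the $\Lm$-graded case I would invoke Proposition \ref{prop:Lietorusbasic}(iv), which makes every $\fh$-stable subspace of $\cL$ automatically $Q$-graded; in particular any $\Lm$-graded ideal is $Q\times\Lm$-graded, so $\Lm$-graded-simplicity is immediate. The same observation simultaneously delivers (ii) and reduces $\Lm$-graded-central-simplicity to the $Q\times\Lm$-analog: any $c\in C(\cL)$ commutes with $\ad h$ for $h\in\fh$ and hence preserves each $\cL_\al$, so every $\Lm$-homogeneous centroidal element has $Q$-degree zero. This forces $C(\cL)^\lm = C(\cL)^{(0,\lm)}$ for all $\lm\in\Lm$, which is precisely the equality in (ii), and it reduces the remaining part of (i) to showing $C(\cL)^{(0,0)}=k\cdot\id$.

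For that last point, let $c\in C(\cL)^{(0,0)}$.  Then $c$ preserves each one-dimensional $\dL{\al}{\lm}$ with $\al\in\Dec$, acting by some scalar $s_{\al,\lm}$; the identity $c\tilde\al^\vee = c[e_\al^\lm,f_\al^\lm]=[ce_\al^\lm,f_\al^\lm]=s_{\al,\lm}\tilde\al^\vee$ shows that $s_{\al,\lm}$ depends only on $\al$. Comparing $c[\tilde\al^\vee,x] = [c\tilde\al^\vee,x] = s_\al\langle\beta,\al^\vee\rangle x$ with $c[\tilde\al^\vee,x] = \langle\beta,\al^\vee\rangle cx = s_\beta\langle\beta,\al^\vee\rangle x$ for $x\in\dL{\beta}{\mu}$, $\beta\in\Dec$, then forces $s_\al=s_\beta$ whenever $\langle\beta,\al^\vee\rangle\ne 0$, and connectedness of the Dynkin diagram of $\supp_Q(\cL)^\times$ makes $s := s_\al$ independent of $\al$; the centroid property together with (LT3) finally shows $c-s\cdot\id$ annihilates each $\cL_0^\mu$. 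I expect the main mild obstacle to be the two propagation arguments—the first spreading ideal membership across all nonzero root spaces, the second spreading the scalar $s_\al$—both resting on walking a chain of pairwise non-orthogonal roots in $\supp_Q(\cL)^\times$; the Weyl-reflection automorphisms $\theta_\al^\lm$ from Remark \ref{rem:Weylaut} provide a clean backup should a direct non-orthogonality step ever be awkward.
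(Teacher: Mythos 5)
Your proof is correct, and it takes a genuinely more self-contained route than the paper's. The paper's proof proceeds by citing Yoshii's result \cite[Lemma 4.4]{Y3} for $\Lm$-graded-simplicity, passes to $Q\times\Lm$-graded-simplicity in the \emph{easy} direction (a $Q\times\Lm$-graded ideal is in particular $\Lm$-graded), establishes (ii) by the same observation you make (that $C(\cL)$ commutes with $\ad\fh$ and hence preserves each $\cL_\al$), and then invokes Remark \ref{rem:gcs} (from \cite[Lemma 4.3.4]{ABFP}) to get graded-central-simplicity instantly from the existence of a one-dimensional homogeneous summand. You, by contrast, prove $Q\times\Lm$-graded-simplicity directly from the axioms and then derive $\Lm$-graded-simplicity in the \emph{nontrivial} direction — observing via \eqref{eq:Lroot} that any $\Lm$-graded ideal is $\fh$-stable and therefore automatically $Q\times\Lm$-graded — and you compute $\dCL{0}{0}=k\,\id$ by a propagation-of-scalars argument rather than appealing to the one-dimensional-summand lemma. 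Both approaches are sound; yours spells out from first principles what the paper relegates to citations, at the cost of being longer. The two propagation arguments you flag as potential soft spots do go through: irreducibility of the root system $\supp_Q(\cL)$ (Lemma \ref{lem:suppQ}) gives the needed chains of pairwise non-orthogonal roots, and (LT2)(i) ensures $\tilde\beta^\vee\in I$ can be recovered at each step of the first propagation since $(\beta,0)\in\supp_{Q\times\Lm}(\cL)$ for $\beta\in\Deic$. One small presentational remark: in your initial reduction to $\al\ne 0$, it is cleaner to say $I$ is an ideal, so $[x,\dL{\beta}{\mu}]\subseteq I\cap\dL{\beta}{\lm+\mu}$ directly, which is what you have — just make sure you note that a nonzero $\beta$ with $[x,\cL_\beta]\ne 0$ exists precisely because (LT3) plus $Z(\cL)=0$ forbids $x$ from centralizing every $\cL_\beta$, $\beta\in\Dec$, as you do.
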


\begin{proof}  Let $(\fg,\fh)$ be the root grading pair for $\cL$.
Yoshii has shown in \cite[Lemma 4.4]{Y3} that
$\cL$ is a graded-simple $\Lm$-graded algebra.
(The argument uses \eqref{eq:Lroot} and \eqref{eq:efvee}.)  Thus
$\cL$ is also a graded-simple $Q\times\Lm$-graded algebra.
Consequently the groups
$\Gm_{Q\times\Lm}(\cL)$ and  $\Gm_\Lm(\cL)$ are defined.
But since the action of $C(\cL)$ on $\cL$ commutes with the adjoint
action of $\fh$, it follows from \eqref{eq:Lroot} that
$C(\cL)\cL_\al \subseteq \cL_\al$ for $\al\in Q$.
Hence $\supp_{Q\times\Lm}(C(\cL))\subseteq \set{0}\times \Lm$,
so we have (ii).
Finally, since $\cL$ contains
a 1-dimensional homogeneous summand,
$\cL$ is a graded-central-simple
$Q\times \Lm$-algebra by Remark \ref{rem:gcs}.
Furthermore, using this fact and (ii), we have
$C(\cL)^0 = \dCL{0}{0} = k1$.  Thus
$\cL$ is a graded-central-simple $\Lm$-graded  algebra.
\end{proof}

For centreless Lie tori, Lemma \ref{lem:fgc}
simplifies as follows:

\begin{proposition}
\label{prop:fgc}
Suppose that $\Lm$ is finitely generated, and let $\cL$ be a centerless
Lie $\Lm$-torus
of type $\De$.  Then the following statements are equivalent:
\begin{itemize}
\item[(a)] $\cL$ is fgc.
\item[(b)] $\cL$ is a free $C(\cL)$-module of finite rank.
\item[(c)] $\Lm/\Gm_\Lm(\cL)$ is finite.
\end{itemize}
\end{proposition}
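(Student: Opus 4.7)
The plan is to derive the proposition from Lemma \ref{lem:fgc} applied to $\cL$ as a $\Lm$-graded algebra. First I would verify its hypotheses: $\Lm$ is finitely generated by assumption, $\cL$ is graded-central-simple as a $\Lm$-graded algebra by Proposition \ref{prop:Lieprop}(i), and the inclusion $2\Lm \subseteq \supp_\Lm(\cL)$ supplied by Lemma \ref{lem:suppLie}(v) provides the required $\ell$. Lemma \ref{lem:fgc} then gives (a) $\Leftrightarrow$ (b) at once, and the implication (a) $\Rightarrow$ (c), since ``$\Lm/\Gm_\Lm(\cL)$ finite'' is part of Lemma \ref{lem:fgc}'s condition (c).

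The rest of the argument is devoted to (c) $\Rightarrow$ (a). The naive approach, verifying Lemma \ref{lem:fgc}(c) by showing $\dim\cL^\lm < \infty$ for every $\lm$, is awkward at the component $\cL_0$ where the pieces $\dL{0}{\mu}$ are not \emph{a priori} one-dimensional. I would instead prove directly that $\cL$ is finitely generated as a $C(\cL)$-module. Since $C(\cL)$ preserves the $Q$-grading by Proposition \ref{prop:Lieprop}(ii), each $\cL_\al$ is a $C(\cL)$-submodule and $\cL = \bigoplus_{\al\in\supp_Q(\cL)} \cL_\al$ is a \emph{finite} direct sum by Lemma \ref{lem:suppQ}. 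It thus suffices to show each summand $\cL_\al$ is finitely generated.

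For $\al\ne 0$ I would invoke Lemma \ref{lem:Cdecomp} to obtain a basis $\set{c_\gm}_{\gm\in\Gm_\Lm(\cL)}$ of $C(\cL)$ by homogeneous units. Since each $c_\gm$ maps $\dL{\al}{\lm}$ isomorphically onto $\dL{\al}{\lm+\gm}$, the semilattice $\sem_\al$ is a union of $\Gm_\Lm(\cL)$-cosets, of which by (c) there are only finitely many. Choosing one representative $\lm_i$ per coset and using $\dim\dL{\al}{\lm_i}=1$ gives $\cL_\al = \sum_i C(\cL)\, e_\al^{\lm_i}$, a finitely generated module.

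The main obstacle is the case $\al = 0$, which I plan to reduce to the previous one via (LT3) and the Jacobi identity: any element of $\cL_0$ is a sum of iterated brackets of elements of $\bigcup_{\beta\in\supp_Q(\cL)\setminus\set{0}}\cL_\beta$, and rewriting such a bracket in left-normed form $[x_{\beta_1},[x_{\beta_2},\dots]]$ with weights summing to zero places the inner bracket in $\cL_{-\beta_1}$, so that the whole expression lies in $[\cL_{\beta_1},\cL_{-\beta_1}]$. This yields
\[
\cL_0 = \sum_{\beta\in\supp_Q(\cL)\setminus\set{0}} [\cL_\beta,\cL_{-\beta}].
\]
Because $C(\cL)$ is the centroid, $[cx,y]=c[x,y]$, so each $[\cL_\beta,\cL_{-\beta}]$ is spanned over $C(\cL)$ by the brackets of the finite generating sets for $\cL_\beta$ and $\cL_{-\beta}$ produced in the previous case; combined with the finiteness of $\supp_Q(\cL)\setminus\set{0}$, this forces $\cL_0$ to be finitely generated and completes the proof.
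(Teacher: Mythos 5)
Your proof is correct, and it takes a genuinely different route from the paper in the crucial implication (c)~$\Rightarrow$~(a). The paper does precisely what you call the ``naive approach'': it verifies condition (c) of Lemma~\ref{lem:fgc} by proving the explicit bound $\dim(\dL{0}{\lm})\le\tfrac12|\Dec|\,|\Lm/\Gm_\Lm(\cL)|$, using (LT3) to write $\dL{0}{\lm}=\sum_{\al\in\Dep}\sum_{\theta\in\Theta}[\dL{\al}{\theta},\cL_{-\al}^{\lm-\theta}]$ and factoring out the homogeneous units $c_\gm$ from Lemma~\ref{lem:Cdecomp}. Handling $\dL{0}{\mu}$ this way turns out to be no more awkward than your approach, and the paper's argument gives explicit dimension bounds for all root spaces of $\cL$ (potentially useful independently, as the paper notes this recovers a result of Neher bounding root spaces of any EALA). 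Your proof instead bypasses Lemma~\ref{lem:fgc}(c) entirely: you decompose $\cL$ with respect to the $Q$-grading (a finite direct sum of $C(\cL)$-submodules by Proposition~\ref{prop:Lieprop}(ii) and Lemma~\ref{lem:suppQ}), show each $\cL_\al$ with $\al\ne 0$ is finitely generated using the one-dimensionality of its pieces and the $\Gm_\Lm(\cL)$-coset structure of $\sem_\al$, and then reduce $\cL_0$ to the nonzero root spaces via $\cL_0=\sum_{\beta\ne 0}[\cL_\beta,\cL_{-\beta}]$ and the centroid property $[cx,y]=c[x,y]=[x,cy]$. Both arguments ultimately rest on the same two ingredients --- (LT3) expressing $\cL_0$ as brackets of nonzero root spaces, and the free $C(\cL)$-basis of homogeneous units from Lemma~\ref{lem:Cdecomp} --- so they are closely related in substance; the difference is that the paper filters by the $\Lm$-degree first while you filter by the $Q$-degree first, and you produce a finite generating set directly rather than invoking the equivalence stated in Lemma~\ref{lem:fgc}. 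One very minor remark: you cite Lemma~\ref{lem:suppLie}(v) for $2\Lm\subseteq\supp_\Lm(\cL)$; the paper cites part~(vi), which states this directly, though it follows from~(v) as well.
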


\begin{proof}
Now $\cL$ is a graded-central-simple algebra by Proposition
\ref{prop:Lieprop}, and
$2\Lm \subseteq \supp_\Lm(\cL)$ by Lemma \ref{lem:suppLie}(vi).
Thus, by Lemma \ref{lem:fgc},
it is enough to show  that
(c) implies $\dim(\cL^\lm) < \infty$ for $\lm\in \Lm$.
A stronger result than this (which bounds the root spaces
of any extended affine Lie algebra) is
announced by Neher in \cite[Proposition 3(b)]{N2}.
For the convenience of the reader we present
the simple argument that is needed here.

Suppose that $\Lm/\Gm_\Lm(\cL)$ is finite.  We show that
\begin{equation}
\label{eq:boundeddim}
\dim(\dL{\al}{\lm}) \le \tfrac 12\left|\Dec\right|s
\andd \dim(\cL^\lm) \le  {\tfrac 12} \left|\Dec\right|s +\left| \Dec\right|
\end{equation}
for
$\al\in  Q$ and $\lm\in \Lm$, where $s = \left|\Lm/\Gm_\Lm(\cL)\right|$.
Since $\dim(\dL{\al}{\lm}) \le 1$ for $\al \ne 0$, the second equation
in \eqref{eq:boundeddim} follows from the first and it is enough to show the first equation
when $\al = 0$.  To do this, let $\Theta$ be a set of representatives of the cosets of
$\Gm_\Lm(\cL)$ in $\Lm$.
Also let $\Dep$ be the set of positive roots in $\De$ relative to some choice
of base for $\De$.
Then for $\lm\in \Lm$, we have
{\allowdisplaybreaks
\begin{align*}
\dL{0}{\lm} &= \sum_{\al\in \Dep} \sum_{\mu\in \Lm} [\dL{\al}{\mu},\cL_{-\al}^{\lm-\mu}]
&&\text{(by (LT3))}\\
&= \sum_{\al\in \Dep}
\sum_{\theta\in \Theta} \sum_{\gm\in \Gm_{\Lm}(\cL)}
[\dL{\al}{\gm+\theta},\cL_{-\al}^{\lm-\gm-\theta}]\\
&= \sum_{\al\in \Dep}
\sum_{\theta\in \Theta} \sum_{\gm\in \Gm_{\Lm}(\cL)}
[\dCL{0}{\gm}\dL{\al}{\theta},\dCL{0}{-\gm}\cL_{-\al}^{\lm-\theta}]
&&\text{(by Lemma \ref{lem:Cdecomp})}\\
&= \sum_{\al\in \Dep}
\sum_{\theta\in \Theta} \sum_{\gm\in \Gm_{\Lm}(\cL)}
\dCL{0}{\gm}\dCL{0}{-\gm}[\dL{\al}{\theta},\cL_{-\al}^{\lm-\theta}]\\
&= \sum_{\al\in \Dep} \sum_{\theta\in \Theta} [\dL{\al}{\theta},\cL_{-\al}^{\lm-\theta}].
\end{align*}
}
Thus we have $\dim(\dL{0}{\lm}) \le
\left|\Dep\right|\left|\Theta\right| = \tfrac 12\left|\Dec\right|s$.
\end{proof}

\begin{remark}
\label{rem:almostallfgc}
Suppose that $k$ is  algebraically closed (of characteristic 0)
and $\Lm$ is finitely generated free abelian group
of rank~$n$. It is a remarkable fact, announced by Neher in
\cite[Theorem~7(b)]{N1}, that if $\Delta$ is not of type A$_{\ell}$ for $\ell \ge 1$, then
\textit{any} centerless Lie $\Lm$-torus of type $\De$ is fgc.
Furthermore,  one can see from the classification
of centerless Lie $\Lm$-tori of type A$_\ell$ \cite{BGK,BGKN,Y1} that there is only one
family of centerless Lie $\Lm$-tori of type A$_\ell$ that are not
fgc, namely the  family of Lie algebras
of the form $\operatorname{sl}_{\ell+1}(k_{\mathbf q})$,
where   $k_{\mathbf q}$ is the quantum torus associated with
an $n\times n$-quantum matrix $\mathbf q$ containing an entry
that is not a root of unity.  Thus in some sense ``almost all''
centerless Lie tori are  fgc.
\end{remark}

\section{Bi-isomorphism and isotopy  for Lie tori}
\label{sec:biandiso}

In  this section we introduce and investigate
two important notions of isomorphism for Lie tori:
bi-isograded-isomorphism, or bi-isomorphism for short, and isotopy.
(See also \cite{AF} for more information about this topic.)

\subsection{Bi-isomorphism}
\label{subsec:biLT}

\begin{definition}
\label{def:biLT}
If $\cL$ is a $Q\times \Lm$-graded algebra
and $\cL'$ is a $Q'\times \Lm'$-graded algebra, we say that
$\cL$ and $\cL'$ are \textit{bi-isograded-isomorphic}
if there is an algebra isomorphism from $\cL$ to $\cL'$ that is
isograded relative to both the root grading and the external grading;
this means that there is an algebra isomorphism $\ph : \cL \to \cL'$,
a group isomorphism $\ph_r: Q \to Q'$,
and a group isomorphism  $\ph_e : \Lm \to \Lm'$ such that
\[\ph(\dL{\al}{\lm}) = \dLp{\ph_r(\al)}{\ph_e(\lm)}\]
for $\al\in Q$ and $\lm\in \Lm$.  In that case
we call $\ph$ a \emph{bi-isograded-isomorphism}.
If $\cL$ and $\cL'$ are Lie tori, then since
$\langle\supp_Q(\cL)\rangle= Q$ and
$\langle\supp_\Lm(\cL)\rangle= \Lm$,
the maps $\ph_r$ and $\ph_e$ are uniquely determined and we call
these maps respectively the \emph{root-grading isomorphism}
and the \emph{external-grading isomorphism} corresponding to $\ph$.
We will usually abbreviate bi-isograded-isomorphic
and bi-isograded-isomorphism as \emph{bi-isomorphic} and
\emph{bi-isomorphism}  respectively.
\end{definition}

\begin{remark}
\label{rem:equivLT1}
Suppose  that   $\cL$ is a $\Lm$-torus of type $\De$
with root grading pair $(\fg,\fh)$,
and $\cL'$ is  $\Lm'$-torus of type~$\De'$ with root grading pair $(\fg,\fh)$.
Suppose that $\ph$ is a bi-isomorphism of $\cL$ onto $\cL'$.
Then it is clear that $\ph(\fg) =\fg'$ and
$\ph(\fh) = \fh'$, so, in this sense, $\cL$ and $\cL'$
have isomorphic root grading pairs.
Also, by the last statement in Lemma \ref{lem:suppQ},
we have
\[\ph_r(\Dei) = \Dei.\]
Thus if $\cL$ and $\cL'$ satisfy (LT5),
we have $\ph_r(\De) = \De'$, so $\De$ and $\De'$ are
isomorphic root   systems.
\end{remark}

We now see that for the purposes of  determining whether
or not two Lie tori are bi-isomorphic, we can
ignore the root gradings.

\begin{proposition}
\label{prop:torusequiv}
Suppose that $\cL$ is a Lie $\Lm$-torus of type $\De$ and $\cL'$ is
a Lie $\Lm'$-torus of type $\De'$.
Then $\cL$ and $\cL'$ are bi-isomorphic
if and only if there is an algebra isomorphism
$\ph : \cL \to \cL'$ and a group isomorphism $\ph_e : \Lm \to \Lm'$
such that
$\ph(\cL^\lm) = \cL'^{\,\ph_e(\lm)}$
for $\lm\in \Lm$.
\end{proposition}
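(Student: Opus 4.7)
The forward direction is immediate: if $\ph(\dL{\al}{\lm})=\dLp{\ph_r(\al)}{\ph_e(\lm)}$ for every $(\al,\lm)$, then summing over $\al\in Q$ gives $\ph(\cL^\lm)=\cL'^{\,\ph_e(\lm)}$. So the content is the converse. Given a $\Lm$-graded algebra isomorphism $\ph$ with associated $\ph_e:\Lm\to\Lm'$, my plan is to post-compose $\ph$ with a suitable automorphism of $\cL'$ so that the modified map sends the Cartan $\fh$ of the root grading pair of $\cL$ onto $\fh'$. The characterization \eqref{eq:Lroot} of the $Q$-grading via the adjoint action of the Cartan will then force the modified map to respect the $Q$-gradings with respect to a naturally induced isomorphism $\ph_r:Q\to Q'$.

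To construct the modifying automorphism, Proposition \ref{prop:Lietorusbasic}(vi) gives $\fg=[\cL^0,\cL^0]$, so the hypothesis $\ph(\cL^0)=\cL'^{\,0}$ immediately yields $\ph(\fg)=\fg'$, and hence $\ph(\fh)$ is a splitting Cartan subalgebra of the finite-dimensional split simple Lie algebra $\fg'$. By the classical conjugacy theorem for splitting Cartan subalgebras in characteristic zero (\cite[chap.~VIII]{B2}), there exists an automorphism $\psi$ of $\fg'$ with $\psi(\ph(\fh))=\fh'$ that is expressible as a finite product of exponentials $\exp(\ad x_\al)$ with $\al\in(\De')^\times$ and $x_\al\in\dLp{\al}{0}$. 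The \emph{main technical step} is lifting $\psi$ to an automorphism $\tilde\psi$ of $\cL'$: because $x_\al$ lies in $\cL'^{\,0}$ and shifts the $Q'$-degree by $\al$, and because $\supp_{Q'}(\cL')\subseteq\De'$ is finite, $\ad(x_\al)$ preserves the $\Lm'$-grading of $\cL'$ and is locally nilpotent on $\cL'$. Thus each $\exp(\ad x_\al)$ extends to a $\Lm'$-graded algebra automorphism of $\cL'$, and the product of these extensions furnishes the desired $\tilde\psi$.

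Set $\rho=\tilde\psi\circ\ph$. Then $\rho$ is an algebra isomorphism $\cL\to\cL'$ with $\rho(\cL^\lm)=\cL'^{\,\ph_e(\lm)}$ and $\rho(\fh)=\fh'$. By Proposition \ref{prop:Lietorusbasic}(iii), the assignment $\al\mapsto\tilde\al$ embeds $Q$ into $\fh^*$ and $Q'$ into $\fh'^*$, and $\rho|_{\fg}$ sends $\De(\fg,\fh)=\widetilde{\Dei}$ onto $\De(\fg',\fh')=\widetilde{\Dei'}$; since $Q=\spann_{\bbZ}(\Dei)$ and $Q'=\spann_{\bbZ}(\Dei')$ (using Lemma \ref{lem:suppQ} and the fact that $2\Desh\subseteq\spann_{\bbZ}(\Dei)$), the transpose of $\rho|_{\fh}$ induces a group isomorphism $\ph_r:Q\to Q'$ characterized by $\widetilde{\ph_r(\al)}\circ\rho|_{\fh}=\tilde\al$. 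Finally, for $x\in\dL{\al}{\lm}$ and $h'=\rho(h)\in\fh'$, one computes
\[
[h',\rho(x)]=\rho([h,x])=\tilde\al(h)\rho(x)=\widetilde{\ph_r(\al)}(h')\rho(x),
\]
so by \eqref{eq:Lroot} applied to $\cL'$, $\rho(\dL{\al}{\lm})\subseteq\dLp{\ph_r(\al)}{\ph_e(\lm)}$, with equality following by symmetry. Hence $\rho$ is the required bi-isomorphism.
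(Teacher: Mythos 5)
Your proof is correct and follows essentially the same route as the paper's: reduce to the case where the isomorphism matches the Cartan subalgebras of the root grading pairs, by composing with a product of exponentials of ad-nilpotent degree-zero root vectors (which preserve the external grading because they lie in $\cL'^{\,0}$ and act nilpotently because the $Q'$-support is contained in the finite set $\De'$), and then read off $\ph_r$ as the inverse transpose of the restriction to the Cartan using the characterization \eqref{eq:Lroot}. The only cosmetic difference is that you post-compose with an automorphism of $\cL'$ while the paper pre-composes with an automorphism of $\cL$; the substance is identical.
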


\begin{proof}  The implication
``$\Rightarrow$'' is immediate from the definition.

Conversely, suppose that we have an
isomorphism $\ph: \cL \to \cL'$ and a group isomorphism $\ph_e :
\Lm \to \Lm'$ such that
\[\varphi(\cL^\lm) = \cL'^{\,\ph_e(\lm)}\]
for
$\lm\in \Lm$.  Let $Q$ be the root lattice for $\cL$
and let $(\fg,\fh)$ be the root grading pair for $\cL$.
Let $Q'$ and $(\fg',\fh')$ be the corresponding
objects for $\cL'$.
By Proposition \ref{prop:Lietorusbasic}(iii) and (iv),
we may identify $\De$ (via the map $\al \mapsto \tilde\al$)
with a root system in $\fh^*$ such that
$\Dei$ is the set of roots (including 0) of $\fg$ relative to
$\fh$ and
\begin{equation}
\label{eq:Lroot1}
\cL_\al = \set{x\in \cL \suchthat [h,x] = \al(h) x \text{ for } h\in \fh}
\end{equation}
for $\al\in Q$.
Similarly
we may identify $\De'$
with a root system in ${\fh'}^*$ such that
$\Deip$ is the set of roots (including 0) of $\fg'$ relative to
$\fh'$ and
\begin{equation}
\label{eq:Lroot2}
\cL'_{\al'} = \set{x'\in \cL' \suchthat [h',x'] = \al'(h') x' \text{ for } h'\in \fh'}
\end{equation}
for $\al'\in Q'$.
We fix a base for the root system $\De$,
so we have a notion of positive and negative roots in $\De$.
Let $\fn_+$ (resp.~$\fn_-$) denote the
sum of the root spaces of $\fg$ relative to $\fh$
corresponding to positive (resp.~negative)
roots.  So $\fg$ has the triangular decomposition
$\fg = \fn_+ \oplus \fh  \oplus \fn_-$.

Observe now that
by  Proposition \ref{prop:Lietorusbasic}(vi) we have
\[\ph(\fg) = \fg'.\]
So both $\fh$ and $\ph^{-1}(\fh')$ are splitting Cartan subalgebras
for $\fg$.  Hence,  there exists
an automorphism $\psi_\fg$ of $\fg$
such that $\psi_\fg(\ph^{-1}(\fh')) = \fh$.  In
fact, by  \cite[Corollary on p.~28]{Sel}, $\psi_\fg$ can be chosen of the form
\[\psi_\fg = \exp(\ad_\fg(x_1)) \dots \exp(\ad_\fg(x_\ell)),\]
where  $x_i \in \fn_-\cup \fn_+$ for $1\le i\le \ell$. Now since
$x_i \in \fn_-\cup \fn_+$, it follows that $\ad_\cL(x_i)$ is
nilpotent for $1\le i\le \ell$. So we may define
\[\psi_\cL = \exp(\ad_\cL(x_1)) \dots \exp(\ad_\cL(x_\ell))\in \Aut_k(\cL).\]
Then $\psi_\cL$ extends $\psi_\fg$,
so
\[\psi_\cL(\ph^{-1}(\fh'))= \fh.\]
Also since $x_i\in \fg \subseteq \cL^0$ for each $i$, we have
$\psi_\cL(\cL^\lm) \subseteq \cL^\lm$ for $\lm\in \Lm$.
Therefore since $\psi_\cL\in \Aut_k(\cL)$ we have
$\psi_\cL(\cL^\lm) = \cL^\lm$
for $\lm\in \Lm$.  Thus replacing
$\ph$ by $\ph \circ (\psi_\cL)^{-1}$ we may assume
that
\begin{equation}
\label{eq:stabfh}
\ph(\fh) = \fh'.
\end{equation}

If $\al\in \fh^*$, we define $\cL_\al$ by the equality \eqref{eq:Lroot1}.
Similarly, if $\al'\in {\fh'}^*$ we define $\cL'_{\al'}$ by the equality \eqref{eq:Lroot2}.
Also, let $\hat\ph : \fh^*\to {\fh'}^*$ denote the inverse dual of the linear map
$\ph|_{\fh} : \fh \to \fh'$;  that is $\hat\ph$ is defined by
$\hat\ph(\al)(h') = \al(\ph^{-1}(h'))$
for $\al\in \fh^*$ and $h'\in \fh'$.  Then it follows from \eqref{eq:stabfh}
that
\[\ph(\cL_\al) = \cL'_{\hat\ph(\al)}\]
for $\al\in \fh^*$.  Consequently, we have
$\hat\ph(\supp_Q(\cL)) = \supp_{Q'}(\cL')$.
But by Lemma \ref{lem:suppQ}, we have $\spann_{\bbZ}(\supp_Q(\cL)) = Q$ and
$\spann_{\bbZ}(\supp_{Q'}(\cL')) = Q'$.
Therefore $\hat\ph(Q) = Q'$, and we may define $\ph_r = \hat\ph\mid_{Q} : Q\to Q'$.
Then for $\al\in Q$ and $\lm \in\Lm$, we have
\[\ph(\dL{\al}{\lm}) = \ph(\cL_\al\cap\cL^\lm) = \ph(\cL_\al)\cap\ph(\cL^\lm)
= \cL'_{\ph_r(\al)}\cap{\cL'}^{\,\ph_e(\lm)}
= \dLp{\ph_r(\al)}{\,\ph_e(\lm)}.\qedhere
\]
\end{proof}

\subsection{Isotopes and isotopy}
\label{subsec:isotopes}

Suppose in this  subsection that \emph{$\cL$ is a Lie $\Lm$-torus of type $\De$, and let
$Q$ be the root lattice of $\De$}.

\begin{definition}
\label{def:homotope}
Suppose that $\shom \in \Hom(Q,\Lm)$, where  $\Hom(Q,\Lm)$ is the group
of group homomorphisms from $Q$ into $\Lm$.  We define a new $Q\times \Lm$-graded
Lie algebra $\cLs$ as follows.  As a Lie algebra, $\cLs = \cL$.
The grading on $\cLs$ is given by
\begin{equation}
\label{eq:isograding}
(\cLs)^\lm_\al = \dL{\al}{\lm+\shom(\al)}
\end{equation}
for $\al\in Q$, $\lm\in\Lm$.
\end{definition}

\begin{remark}
\label{rem:homotope}
Let $\shom \in \Hom(Q,\Lm)$.

(i) Observe that $(\cLs)_\al = \cL_\al$ for $\al\in Q$, and hence $\supp_Q(\cLs) = \supp_Q(\cL)$.

(ii) Recall that for $\al\in Q$ we have defined
$\sem_\al = \set{\lm\in\Lm \suchthat \dL{\al}{\lm} \ne 0}$
(see \eqref{eq:semilattice}).
Then, if $\al\in Q$, we have $(\cLs)_\al^0 = \dL{\al}{\shom(\al)}$, so
\begin{equation}
\label{eq:isograding0}
(\cLs)_\al^0 \ne 0 \iff \shom(\al) \in \Lm_\al.
\end{equation}
\end{remark}

\begin{proposition}
\label{prop:isotope}
Let $\shom\in \Hom(Q,\Lm)$, and let $\Pi$ be a base for the root system~$\De$.
The following statements are equivalent:
\begin{itemize}
\item[(a)] $\cLs$ is a Lie torus.
\item[(b)] $\shom(\al) \in \sem_\al$ for all $\al\in \Deic$.
\item[(c)] $\shom(\al) \in \sem_\al$ for all $\al\in \Pi$.
\end{itemize}
Moreover, if these conditions are satisfied, then $\cLs$ satisfies
(LT5) if and only if $\cL$ satisfies (LT5).
\end{proposition}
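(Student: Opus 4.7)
The plan is to prove (a) $\Rightarrow$ (b) $\Rightarrow$ (c) $\Rightarrow$ (b) $\Rightarrow$ (a), then handle the ``moreover'' statement separately. For (a) $\Rightarrow$ (b): axiom (LT2)(i) applied to $\cLs$ forces $(\cLs)_\al^0 \ne 0$ for every $\al \in \Deic$, which by \eqref{eq:isograding0} is exactly $\shom(\al)\in \sem_\al$. Implication (b) $\Rightarrow$ (c) is immediate since $\Pi \subseteq \Deic$.

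The main obstacle is (c) $\Rightarrow$ (b): propagating $\shom(\al)\in \sem_\al$ from the simple roots to all of $\Deic$. The key ingredients are, first, the identity
\[
\shom(\beta) = \shom(w_\al(\beta)) + \langle\beta,\al^\vee\rangle\shom(\al),
\]
which follows from $w_\al(\beta) = \beta - \langle\beta,\al^\vee\rangle\al$ and the fact that $\shom$ is a homomorphism; and second, the inclusion
\[
\sem_{w_\al(\beta)} + \langle\beta,\al^\vee\rangle\sem_\al \subseteq \sem_\beta,
\]
which is obtained by substituting $w_\al(\beta)$ for $\beta$ in \eqref{eq:semilatticeprop} and using $\langle w_\al(\beta),\al^\vee\rangle = -\langle\beta,\al^\vee\rangle$. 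Combining the two, if $\shom(\al)\in \sem_\al$ for $\al\in \Pi$ and $\shom(w_\al(\beta)) \in \sem_{w_\al(\beta)}$, then $\shom(\beta)\in \sem_\beta$. Since $\Deic$ is a reduced irreducible root system with base $\Pi$ whose Weyl group is generated by the simple reflections $\set{w_\al}_{\al\in\Pi}$, every $\beta\in\Deic$ can be written as $\beta = w_{\al_{i_1}}\cdots w_{\al_{i_m}}(\al_{i_0})$ with $\al_{i_j}\in \Pi$; induction on $m$, with base case $\beta\in\Pi$ supplied by (c), then yields $\shom(\beta)\in\sem_\beta$ for all $\beta\in\Deic$.

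For (b) $\Rightarrow$ (a), I verify (LT1)--(LT4) for $\cLs$. Axioms (LT1) and (LT3) transfer directly from $\cL$ by Remark \ref{rem:homotope}(i), since $(\cLs)_\al = \cL_\al$. Axiom (LT2)(i) is precisely (b) by \eqref{eq:isograding0}. For (LT2)(ii), a nonzero piece $(\cLs)_\al^\lm$ with $\al\ne 0$ equals $\cL_\al^{\lm+\shom(\al)}$, so the $\sltwo$-triple already supplied by (LT2)(ii) for $\cL$ lies in the correct graded pieces of $\cLs$ and satisfies \eqref{eq:basic} verbatim, that relation depending only on the $Q$-grading. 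Axiom (LT4) uses $\shom(0)=0$: this gives $\sem_0\subseteq\supp_\Lm(\cLs)$, and $\langle\sem_0\rangle=\Lm$ by Lemma \ref{lem:suppLie}(v) together with (LT4) for $\cL$.

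Finally, the ``moreover'' statement is immediate from Remark \ref{rem:homotope}(i): since $\supp_Q(\cLs) = \supp_Q(\cL)$, axiom (LT5) holds for $\cLs$ if and only if it holds for $\cL$.
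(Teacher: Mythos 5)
Your proposal is correct and follows the same overall structure as the paper's proof, but it takes genuinely different (and cleaner) routes at two points. For the hard step (c)~$\Rightarrow$~(b), the paper works directly with automorphisms: it forms $g = \theta_{\al_1}^{\shom(\al_1)} \cdots \theta_{\al_p}^{\shom(\al_p)}$, observes that $g$ preserves $\tL^0$ because the $e_{\al_i}^{\shom(\al_i)}$ and $f_{\al_i}^{\shom(\al_i)}$ lie in $\tL^0$, and maps $\cL_{\al_{p+1}}$ to $\cL_\gamma$, concluding $\tL^0_\gamma \ne 0$. You instead extract the purely combinatorial consequence \eqref{eq:semilatticeprop} (itself a corollary of \eqref{eq:Weylaut}) and combine it with the linearity of $\shom$ to get $\shom(\beta) = \shom(w_\al(\beta)) + \langle\beta,\al^\vee\rangle\shom(\al) \in \sem_\beta$, then induct on Weyl-word length. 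Both arguments rest on \eqref{eq:Weylaut}, but yours avoids the explicit group element $g$ and the care needed to show $g(\tL^0)\subseteq\tL^0$; it is a pleasantly direct computation at the level of the sets $\sem_\al$. Similarly, for (LT4) the paper chooses $\beta\in\Desh$ and shows $\sem_\beta \subseteq \widetilde\sem_\beta + \widetilde\sem_\beta$ using $2\Lm\subseteq\sem_\beta$; your observation that $\shom(0)=0$ forces $\widetilde\sem_0 = \sem_0$, combined with $\langle\sem_0\rangle = \langle\supp_\Lm(\cL)\rangle = \Lm$ from Lemma \ref{lem:suppLie}(v) and (LT4), is shorter and dispenses with the detour through short roots. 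The remaining steps --- (a)~$\Rightarrow$~(b) via \eqref{eq:isograding0}, (b)~$\Rightarrow$~(c) by inclusion $\Pi\subseteq\Deic$, the transfer of (LT1)/(LT3) via Remark \ref{rem:homotope}(i), the construction of $\tilde e_\al^\lm, \tilde f_\al^\lm$ for (LT2)(ii), and the (LT5) statement --- match the paper essentially verbatim. One small point worth making explicit: the intermediate roots $w_{\al_{i_j}}\cdots w_{\al_{i_m}}(\al_{i_0})$ in your induction stay in $\Deic$ (the Weyl group permutes the reduced system $\Dei$), so \eqref{eq:semilatticeprop} applies at each step; this is implicit in your argument and is fine.
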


\begin{proof} The final statement is clear using
Remark \ref{rem:homotope}(i).  So we need to prove the equivalence of (a), (b) and (c).
To simplify notation, let  $\tL = \cLs$.

``(a)$\Leftrightarrow$(b)'' If (a) holds then $\tL$ satisfies (LT2)(i), and hence,
by \eqref{eq:isograding0}, (b) holds.  For the converse, suppose that (b) holds.
Since $\cL$ satisfies (LT1) and (LT3), so does $\tL$ (by Remark \ref{rem:homotope}(i)).
Also, it follows from (b) and \eqref{eq:isograding0} that $\tL$ satisfies (LT2)(i).
To show that $\tL$ satisfies (LT2)(ii), let $(\al,\lm)\in \supp_{Q\times\Lm}(\tL)$ and
$\al\in \Dec$.  Then $(\al,\lm+\shom(\al))\in \supp_{Q\times\Lm}(\cL)$, so we can choose
$e_{\al}^{\lm+\shom(\al)} \in \dL{\al}{\lm+\shom(\al)}$ and
$f_{\al}^{\lm+\shom(\al)} \in \dL{-\al}{-\lm-\shom(\al)}$ as in (LT2)(ii) (for $\cL$).
Set $\tilde e_{\al}^\lm = e_{\al}^{\lm+\shom(\al)}$ and
$\tilde f_{\al}^\lm = f_{\al}^{\lm+\shom(\al)}$.
Then $\tL_{\al}^{\lm} = k\tilde e_{\al}^\lm$, $\tL_{-\al}^{-\lm} = k\tilde f_{\al}^\lm$, and
\[
[[\tilde e_{\al}^\lm,\tilde f_{\al}^\lm],  x_\beta]
= [[e_{\al}^{\lm+\shom(\al)},f_{\al}^{\lm+\shom(\al)}],  x_\beta]
= \langle \beta, \al^\vee\rangle x_\beta
\]
for $x_\beta\in \tL_\beta = \cL_\beta$, $\beta\in Q$.  Thus, $\tL$ satisfies (LT2)(ii).
To show that $\tL$ satisfies (LT4), let
\[\widetilde{\sem}_\al = \set{\lm\in\Lm \suchthat \tL_\alpha^{\lm}\ne 0} = \sem_\al - \shom(\al)\]
for $\al \in Q$.  Choose $\beta\in \Desh$. Then, by Lemma \ref{lem:suppLie}~(v),
$2\Lm \subseteq \sem_\beta$.
So, if $\lm\in \sem_\beta$, we have
$\lm = (\lm-\shom(\beta))+(2\shom(\beta) - \shom(\beta))
\in (\sem_\beta - \shom(\beta)) +  (\sem_\beta - \shom(\beta))
\in \widetilde{\sem}_\beta + \widetilde{\sem}_\beta$.  So
$\sem_\beta \subseteq \tilde\sem_\beta +  \tilde\sem_\beta$.
But, by Lemma \ref{lem:suppLie}~(v), we have
$\langle \sem_\beta \rangle = \Lm$. So $\langle \tilde\sem_\beta \rangle = \Lm$
and hence $\supp_\Lm(\tL) = \tL$.  Therefore $\tL$ satisfies (LT4) and we have (a).

``(b)$\Leftrightarrow$(c)''  The implication ``(b)$\Rightarrow$(c)'' is trivial.
For the converse we assume that (c) holds.
Let $\gamma\in \Deic$.  Then
$\gamma = w_{\al_1}\dots w_{\al_p} \al_{p+1}$ for some $\al_1,\dots,\al_p,\al_{p+1} \in \Pi$.
Let $g = g_1\dots g_p$, where
\[g_i = \theta_{\al_i}^{\shom(\al_i)} =
\exp(\ad(e_{\al_i}^{\shom(\al_i)}))\exp(\ad(-f_{\al_i}^{\shom(\al_i)})) \exp(\ad(e_{\al_i}^{\shom(\al_i)}))\]
for $1\le i \le p$.   Then, by \eqref{eq:Weylaut}, $g(\cL_{\al_{p+1}}) = \cL_\gamma$,
so $g(\tL_{\al_{p+1}}) = \tL_\gamma$.
But $e_{\al_i}^{\shom(\al_i)}$ and $f_{\al_i}^{\shom(\al_i)}$
are in $\tL^0$ for $1\le i \le p$, so $g(\tL^0) \subseteq \tL^0$.
Therefore, $g(\tL^0_{\al_{p+1}}) \subseteq g(\tL^0_{\gm})$.  Hence, since $\tL^0_{\al_{p+1}} \ne 0$,
we have $\tL^0_{\gm}\ne 0$.  So $\shom(\gamma) \in \sem_\gamma$, and (b) holds.
\end{proof}

\begin{definition}
\label{def:isotope}
Suppose that $\shom\in \Hom(Q,\Lm)$.  If $\shom$ satisfies the conditions (a)--(c) in Proposition
\ref{prop:isotope}, we say that $\shom$ is \emph{admissible} for $\cL$.  In that case,
we call the Lie torus $\cLs$ the $\shom$-\emph{isotope} of $\cL$.
\end{definition}

\begin{remark}
\label{rem:motivation2}
There are coordinatization theorems for the centreless Lie tori of each type (as mentioned
in the introduction),
and the notion of isotope for a centreless Lie torus corresponds in several
cases to a corresponding classical notion of isotope for the coordinate algebra \cite{AF}.
To give one example, Yoshii has shown that every centreless Lie torus of type $A_1$
is isomorphic to the Tits-Kantor-Koecher Lie algebra $\TKK(\cJ)$ constructed
from a Jordan torus $\cJ$ \cite{Y1}.  One can easily show that
the isotopes of the Lie torus $\TKK(\cJ)$ are up to bi-isomorphism the
Lie tori $\TKK(\cJ^{(u)})$, where $u$ runs over all nonzero homogeneous
(hence invertible) elements of $\cJ$ and where
$\cJ^{(u)}$ is the $u$-isotope of $\cJ$ \cite[Prop.~8.5]{AF}.  (See for example
\cite[\S 7.2]{Mc} for information about isotopes of Jordan algebras.)
\end{remark}

\begin{remark}
\label{rem:specify}  Suppose that $\Pi = \set{\al_1,\dots,\al_r}$ is a base
for $\De$.  By Proposition~\ref{prop:isotope}, to specify an admissible
$\shom\in \Hom(Q,\Lm)$ for $\cL$, and hence an isotope $\cLs$ of $\cL$,
one can arbitrarily choose $\lm_i\in \Lm_{\al_i}$ for $1\le i \le r$, and then
define $\shom\in \Hom(Q,\Lm)$ with $\shom(\al_i) = \lm_i$ for $1\le i \le r$.
\end{remark}

\begin{lemma}
\label{lem:isotopeequiv1}
Suppose that $\shom \in \Hom(Q,\Lm)$ is admissible for $\cL$. Then:
\begin{itemize}
\item[(i)] $\cL^{(0)} = \cL$.
\item[(ii)] If $t\in \Hom(Q,\Lm)$, then $t$ is admissible for
$\cLs$ if and only if $\shom+t$ is admissible for $\cL$.  Moreover,
in that case, $(\cLs)^{(t)} = \cL^{(\shom + t)}$.
\item[(iii)] $-\shom$ is admissible for $\cLs$ and
$(\cLs)^{(-\shom)} = \cL$.
\end{itemize}
\end{lemma}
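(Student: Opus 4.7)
The plan is to reduce everything to direct computation with the grading formula \eqref{eq:isograding} together with the admissibility criterion from Proposition \ref{prop:isotope}(b). Part (i) will be immediate: for every $\al\in Q$ and $\lm\in\Lm$, \eqref{eq:isograding} with $\shom=0$ gives $(\cL^{(0)})_\al^\lm = \cL_\al^{\lm+0(\al)}=\dL{\al}{\lm}$, so $\cL^{(0)} = \cL$ as $Q\times\Lm$-graded Lie algebras.

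For part (ii), I would first compute the grading on both sides and show it is identical. Applying \eqref{eq:isograding} twice gives
\[
\bigl((\cLs)^{(t)}\bigr)_\al^\lm \;=\; (\cLs)_\al^{\lm+t(\al)} \;=\; \dL{\al}{\lm+t(\al)+\shom(\al)} \;=\; \dL{\al}{\lm+(\shom+t)(\al)} \;=\; \bigl(\cL^{(\shom+t)}\bigr)_\al^\lm,
\]
which identifies the two graded Lie algebras as soon as we know they are both Lie tori. For the admissibility statement, the key ingredient is the description of the sets $\widetilde{\sem}_\al$ for $\cLs$ that already appears in the proof of Proposition \ref{prop:isotope}, namely $\widetilde{\sem}_\al = \sem_\al - \shom(\al)$ for $\al\in Q$. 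Using Proposition \ref{prop:isotope}(b), $t$ is admissible for $\cLs$ if and only if $t(\al)\in\widetilde{\sem}_\al = \sem_\al-\shom(\al)$ for every $\al\in\Deic$, which rearranges to $(\shom+t)(\al)\in\sem_\al$ for every $\al\in\Deic$; by Proposition \ref{prop:isotope}(b) again, this is exactly the admissibility of $\shom+t$ for $\cL$.

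Part (iii) then follows by specializing (ii) to $t=-\shom$: since $\shom+t=0$ is trivially admissible for $\cL$ (every $\sem_\al$ with $\al\in\Deic$ contains $0$ by Lemma \ref{lem:suppLie}(i)), (ii) implies $-\shom$ is admissible for $\cLs$ and $(\cLs)^{(-\shom)} = \cL^{(0)}$, which equals $\cL$ by (i).

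I do not expect any genuine obstacle in this proof: once the formula $\widetilde{\sem}_\al = \sem_\al - \shom(\al)$ is lifted from the proof of Proposition \ref{prop:isotope} (where it was established using just \eqref{eq:isograding}), both the graded algebra identity and the admissibility equivalence in (ii) are a bookkeeping exercise, and (i) and (iii) are formal consequences. The only point requiring mild care is to be explicit that the equality $(\cLs)^{(t)} = \cL^{(\shom+t)}$ in (ii) is an equality of $Q\times\Lm$-graded Lie algebras (the underlying Lie algebra in both is $\cL$), so that the conclusion of (iii) is literally $\cL$ rather than a Lie torus merely bi-isomorphic to it.
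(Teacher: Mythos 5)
Your proof is correct and follows essentially the same route as the paper: part (i) by direct inspection of \eqref{eq:isograding}, part (ii) via the identity $\widetilde{\sem}_\al = \sem_\al - \shom(\al)$ together with the admissibility criterion of Proposition \ref{prop:isotope}(b), and part (iii) as a formal consequence of (i) and (ii). You simply spell out the grading computations the paper compresses to ``is clear,'' and correctly flag that the equalities are of $Q\times\Lm$-graded algebras on the same underlying Lie algebra.
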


\begin{proof}
(i) is clear.  For (ii),  we write
$\sem_\al^{(\shom)} := \set{\lm\in \Lm : (\cLs)_\al^\lm \ne 0} = \sem_\al - \shom(\al)$
for $\al\in Q$.  Then,
\begin{align*}
\text{$t$ is admissible for $\cLs$}
&\iff \text{$t(\al) \in \sem_\al^{(\shom)}$ for   $\al\in \Deic$}\\
&\iff \text{$t(\al) \in \sem_\al-\shom(\al)$ for  $\al\in \Deic$}\\
&\iff \text{$(\shom + t)(\al) \in \sem_\al$ for   $\al\in \Deic$}\\
&\iff \text{$\shom + t$ is admissible for $\cL$}
\end{align*}
The second statement in (ii) is clear.  Finally, (iii) follows from (i) and (ii).
\end{proof}

The following lemma is easily checked.

\begin{lemma}
\label{lem:isotopeequiv2}
Suppose that $\cL$ is a Lie $\Lm$-torus of type $\De$,
$\cL'$ is a Lie $\Lm'$-torus of type $\De'$, and
$\ph$ is an bi-isomorphism of $\cL$ onto $\cL'$
(see Definition \ref{def:biLT}). Suppose that
$\shom\in \Hom(Q,\Lm)$ and $\shom'\in \Hom(Q',\Lm')$ with
$\ph_e \shom = \shom' \ph_r$.
Then  $\shom$ is admissible for $\cL$ if and only if
$\shom'$ is admissible for $\cL'$.  Moreover, in that case,
$\ph$ is an bi-isomorphism of $\cLs$ onto~${\cL'}^{(\shom')}$
(with the same root-grading isomorphism and
the same external-grading isomorphism).
\end{lemma}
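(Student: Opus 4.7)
The plan is to verify both claims directly from the definitions; the hypothesis $\ph_e\shom = \shom'\ph_r$ is tailor-made to make the isotope constructions correspond under $\ph$, so no deep idea should be needed.

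The key preliminary observation I would record is that, because $\ph$ is a bi-isomorphism, $\ph(\dL{\al}{\lm}) = \dLp{\ph_r(\al)}{\ph_e(\lm)}$, and hence $\ph_e(\sem_\al) = \sem'_{\ph_r(\al)}$ for every $\al \in Q$. Combined with Remark~\ref{rem:equivLT1}, which says $\ph_r$ restricts to a bijection $\Dei \to \Deip$, this also carries $\Deic$ bijectively onto $(\Deip)^\times$.

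For the equivalence of admissibility I would invoke criterion (b) of Proposition~\ref{prop:isotope}: $\shom$ is admissible for $\cL$ iff $\shom(\al)\in\sem_\al$ for all $\al\in\Deic$. Applying $\ph_e$ to both sides of this inclusion and using $\ph_e\shom=\shom'\ph_r$ together with $\ph_e(\sem_\al)=\sem'_{\ph_r(\al)}$ turns it into $\shom'(\beta)\in\sem'_\beta$ for all $\beta\in(\Deip)^\times$, which is exactly admissibility of $\shom'$ for $\cL'$ by the same criterion. Since every step is an equivalence, the ``iff'' follows.

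For the remaining assertion, since $\ph$ is already an algebra isomorphism I only need to check that it respects the isotope gradings, with the same $\ph_r$ and $\ph_e$ as grading isomorphisms. A one-line computation---apply \eqref{eq:isograding} on the left, the bi-isomorphism property of $\ph$ in the middle, and $\ph_e\shom=\shom'\ph_r$ to rewrite the exponent---yields
\[
\ph\bigl((\cLs)_\al^\lm\bigr) = \dLp{\ph_r(\al)}{\ph_e(\lm)+\shom'(\ph_r(\al))} = \bigl({\cL'}^{(\shom')}\bigr)_{\ph_r(\al)}^{\ph_e(\lm)},
\]
which finishes the proof. There is no real obstacle; the only thing to watch is the bookkeeping between the root and external degrees.
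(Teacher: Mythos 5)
Your proof is correct and is almost certainly the argument the authors have in mind when they write ``easily checked'': everything follows by unwinding Definition~\ref{def:biLT}, Definition~\ref{def:homotope}, and the admissibility criterion of Proposition~\ref{prop:isotope}, with the hypothesis $\ph_e\shom = \shom'\ph_r$ doing exactly the bookkeeping needed. The two steps you isolate are the right ones — the translation $\ph_e(\sem_\al) = \sem'_{\ph_r(\al)}$ together with $\ph_r(\Deic) = (\Deip)^\times$ gives the admissibility equivalence, and the displayed one-line computation handles the graded-isomorphism claim.
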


\begin{definition}
\label{def:isotopy}
Suppose that $\cL$ is a Lie $\Lm$-torus of type $\De$ and
$\cL'$ is a Lie $\Lm'$-torus of type $\De'$.
We say that $\cL$ is \emph{isotopic} to $\cL'$, written
$\cL\sim \cL'$, if some isotope $\cLs$ of $\cL$ is bi-isomorphic
to $\cL'$.
\end{definition}

It follows easily from Lemmas \ref{lem:isotopeequiv1} and \ref{lem:isotopeequiv2} that
$\sim$ is an equivalence relation on the class of Lie tori.

\section{Multiloop realization}
\label{sec:multrealization}

In rest of the paper  we investigate multiloop realizations
of centreless Lie tori of nullity $n$.  If $n = 0$, any such Lie torus is a finite dimensional split simple Lie algebra (see Remark \ref{rem:nullity0}), and we do not have anything to add to the theory of these algebras.  Thus,
\emph{for the rest of the paper we assume that $n$ is an integer $\ge 1$}.

\emph{We also suppose  for the rest of the paper that $k$ is algebraically closed} (of characteristic~0).
Thus, $k^\times$  contains a primitive root $\ell^\text{th}$
of unity $\zeta_\ell$ for all positive integers $\ell$. We assume that
we have made a fixed \emph{compatible} choice of these roots of unity in  $k^\times$
in the sense that
\[\zeta_{m \ell}^m = \zeta_\ell\]
for all $\ell, m\ge 1$.

\subsection{Multiloop algebras}
\label{subsec:multiloop}

We first recall  the definitions and the results that we
will need from \cite{ABFP} regarding multiloop algebras  in general.
In  order to do this conveniently we first introduce some notation.

\begin{notation}
\label{not:action} \

(i) Suppose that $G$ is a group.
We let
\[\cfo_n(G) = \set{\bsg = (\sg_1,\dots,\sg_n)\in G^n  \suchthat  \order{\sg_i} <  \infty,\ \sg_i\sg_j = \sg_j\sg_i \text{ for all } i,j}\]
denote the set of all length $n$ sequences of commuting finite order elements
of elements of $G$.
If $P=(p_{ij})\in \Mat_n(\bbZ)$ and
$\bsg = (\sg_1,\ldots,\sg_n)\in \cfo_n(G)$, we set
\[
\bsg^P=(\prod\nolimits_{i=1}^{n}\sigma_{i}
^{p_{i1}},\ldots,\prod\nolimits_{i=1}^{n}\sigma_{i}^{p_{in}})\in\cfo_n(G).
\]
One sees that
\begin{equation}
\label{eq:Mnaction}
(\bsg^{P})^{P'}=\bsg^{PP'}
\end{equation}
for $P,P' \in \Mat_n(\bbZ)$. In particular,
this gives a right action of $\GL_n(\bbZ)$ on $\cfo_n(G)$.
If $\bsg\in\cfo_n(G)$, we let
\[\langle \bsg \rangle = \langle \sg_1,\dots,\sg_n\rangle\]
denote the subgroup of $G$ generated by $\sg_1,\dots,\sg_n$.
Then, if $P\in \Mat_n(\bbZ)$, it is clear that
$\langle \bsg^P \rangle \subseteq \langle \bsg \rangle$; so if
$P \in \GL_n(\bbZ)$, we have
\begin{equation}
\label{eq:bsggen}
\langle \bsg^P \rangle = \langle \bsg \rangle.
\end{equation}
Let
\[\boldone = (1,\dots,1).\]
in  $G^n$. Then if $\bsg\in \cfo_n(G)$ and
$\boldm \in \bbZ^n_+$, where $\bbZ_+$  denotes the set of positive integers,  we write
\[\bsg^\boldm = \boldone\]
to mean that $\sg_i^{m_i} = 1$ for all $i$.

(ii) If $\cA$ is an algebra, we use the notation in (i) for the group
$G=\Aut_k(\cA)$. For brevity we write
\[\cfo_n(\cA) = \cfo_n(\Aut_k(\cA)).\]
That is $\cfo_n(\cA)$ is the set of length $n$ sequences
of  commuting finite order automorphisms of $\cA$.
We have a right action of $\GL_n(\bbZ)$ on
$\cfo_n(\cA)$ as in (i).

(iii) If $\ph : \cA \to \cA'$ is an isomorphism of algebras and
$\bsg = (\sg_1,\ldots,\sg_n)\in \cfo_n(\cA)$, we set
\[\ph\bsg\ph^{-1} = (\ph\sg_1\ph^{-1},\ldots,\ph\sg_n\ph^{-1})\in \cfo_n(\cA').\]
\end{notation}

\begin{definition}
\label{def:multiloop}
Suppose $\cA$ is an (ungraded) algebra
over~$k$,  $\bsg = (\sg_1,\dots,\sg_n)\in \cfo_n(\cA)$, and
$\boldm = (m_1,\dots,m_n) \in \bbZ^n_+$ with $\bsg^\boldm = \boldone$.
Let  $\bLm = \bbZ/(m_1)\oplus \dots \oplus \bbZ/(m_n)$
and let $\lm = (\ell_1,\dots,\ell_n) \mapsto \blm = (\modell_1,\dots,\modell_n)$
be the canonical group homomorphism from $\Zn$ onto $\bLm$, where
$\modell_j = \ell_j + m_j\bbZ$ for each $j$.
We give the algebra $\cA$ a $\bLm$-grading,
called the \emph{$\bLm$-grading of $\cA$ determined by $\bsg$}, by setting
\begin{equation*}
 \cA^\blm = \set{u\in \cA \suchthat \sg_j u
= \zeta_{m_j}^{\ell_j} u \text{ for } 1\le j\le n}
\end{equation*}
for $\blm = (\modell_1,\dots,\modell_n) \in \bLm$.
Let
$k[z_1^{\pm 1},\dots,z_n^{\pm1}] = \sum_{\lm\in\Zn} k z^\lm$
be the algebra of Laurent polynomials
over~$k$, where $z^\lm = z_1^{\ell_1}\dots z_n^{\ell_n}$ for $\lm = (\ell_1,\dots,\ell_n)\in\Zn$.
Let
\[
\textstyle
\loopm(\cA,\bsg) = \sum_{\lm\in \Zn}
\cA^\blm \ot z^\lm
\]
in the  algebra $\cA \ot_k k[z_1^{\pm 1},\dots,z_n^{\pm 1}]$. Then
$\loopm(\cA,\bsg)$ is a subalgebra of $\cA \ot_k k[z_1^{\pm 1},\dots,z_n^{\pm
1}]$. We define a $\Zn$-grading on $\loopm(\cA,\bsg)$  by setting
\begin{equation}
\label{eq:multgrad}
\loopm(\cA,\bsg)^\lm =
\cA^\blm\ot z^\lm
\end{equation}
for all  $\lm\in \Zn$.
We call the $\Zn$-graded algebra
$\loopm(\cA,\bsg)$ the \textit{multiloop algebra}
of $\bsg$ (based on $\cA$ and relative
to  $\boldm$).
\end{definition}

By \cite[Lemma 3.2.4]{ABFP},  we have the following:

\begin{lemma}
\label{lem:multprop} Suppose that $\boldm$, $\cA$ and
$\bsg$ are  as in Definition \ref{def:multiloop}.
Then
\[\langle \supp_{\Zn}(\loopm(\cA,\bsg))\rangle =\Zn \iff
\order{\langle \sg_{1},\ldots,\sg_n\rangle} =m_{1}\cdots m_n.
\]
\end{lemma}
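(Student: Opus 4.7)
The plan is to reduce the statement to a fact about the $\bLm$-support of $\cA$, where $\bLm=\bbZ/(m_1)\oplus\cdots\oplus\bbZ/(m_n)$, via the canonical projection $\pi\colon\Zn\to\bLm$. By \eqref{eq:multgrad}, $\supp_{\Zn}(\loopm(\cA,\bsg)) = \pi^{-1}(S)$, where $S:=\supp_{\bLm}(\cA)$ is the support of the $\bLm$-grading of $\cA$ determined by $\bsg$. Assuming $\cA\ne 0$ (so that $S$ is nonempty), any fiber $\pi^{-1}(\blm)$ with $\blm\in S$ is a coset of $\ker(\pi)$, so the difference set $\pi^{-1}(\blm)-\pi^{-1}(\blm)=\ker(\pi)$ is contained in $\langle\pi^{-1}(S)\rangle$. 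Since a subgroup of $\Zn$ containing $\ker(\pi)$ is determined by its image in $\bLm$, this yields $\langle\pi^{-1}(S)\rangle=\pi^{-1}(\langle S\rangle)$, reducing the lemma to the equivalence $\langle S\rangle=\bLm\iff|\langle\bsg\rangle|=m_1\cdots m_n$.

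To prove this equivalence, I would write $H=\langle\bsg\rangle$ and let $\pi_H\colon\bLm\to H$ be the surjection $(\modell_1,\ldots,\modell_n)\mapsto\sg_1^{\ell_1}\cdots\sg_n^{\ell_n}$, which is well defined since $\sg_i^{m_i}=1$. The equality $|H|=m_1\cdots m_n$ holds precisely when $K:=\ker\pi_H=0$. Identifying $\widehat{\bLm}$ with $\bLm$ via the nondegenerate pairing $(\blm,\bar\mu)\mapsto\zeta_{m_1}^{\ell_1\mu_1}\cdots\zeta_{m_n}^{\ell_n\mu_n}$, each $\cA^\blm$ is the eigenspace of the induced $\bLm$-action (through $\pi_H$) at the character $\blm$, and such a character factors through $H$ if and only if it is trivial on $K$. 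Hence $S\subseteq K^\perp$, and therefore $\langle S\rangle\subseteq K^\perp$.

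The main step is the reverse containment $K^\perp\subseteq\langle S\rangle$, which I would prove by Pontryagin duality: if $\langle S\rangle\subsetneq K^\perp$, then some nonidentity $h\in H\cong\widehat{K^\perp}$ is annihilated by every character in $S$, so $h$ acts trivially on each $\cA^\blm$ with $\blm\in S$ and hence on all of $\cA$, contradicting the faithfulness of $H\hookrightarrow\Aut_k(\cA)$. Combining gives $\langle S\rangle=\bLm\iff K^\perp=\bLm\iff K=0\iff|H|=m_1\cdots m_n$, which together with the first paragraph completes the proof. The one delicate point is the Pontryagin duality step, which requires keeping the two identifications $\widehat{\bLm}\cong\bLm$ and $\widehat H\cong\bLm/K$ lined up consistently; this uses the hypothesis that $k$ is algebraically closed of characteristic zero so that the eigenspace decompositions exist and the roots of unity pair nondegenerately. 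Everything else is a routine chase of kernels and images.
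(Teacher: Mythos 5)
Your proof is correct. Note, however, that the paper does not actually prove Lemma \ref{lem:multprop}: it is quoted from \cite[Lemma 3.2.4]{ABFP}, so there is no in-paper argument to compare against. Your argument supplies a self-contained proof, and the key steps all check out: the identification $\supp_{\Zn}(\loopm(\cA,\bsg)) = \pi^{-1}(\supp_\bLm(\cA))$ via \eqref{eq:multgrad}; the reduction $\langle\pi^{-1}(S)\rangle = \pi^{-1}(\langle S\rangle)$ (which uses, as you flag, that $S\ne\emptyset$, so the tacit hypothesis $\cA\ne 0$ is genuinely needed here -- the stated equivalence fails for $\cA=0$); the containment $S\subseteq K^{\perp}$ because any $\bar\kappa\in K$ acts as the identity on each $\cA^{\blm}$; and the reverse containment $K^{\perp}\subseteq\langle S\rangle$ by the duality argument using that $\langle\bsg\rangle\hookrightarrow\Aut_k(\cA)$ is faithful. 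One small stylistic remark: the cleaner canonical identification at the end is $\widehat{H}\cong K^{\perp}$ (characters of $\bLm$ trivial on $K$) rather than $\widehat{H}\cong\bLm/K$; the latter holds as an abstract isomorphism of finite abelian groups but requires a second use of the pairing to make canonical, and working directly with $K^{\perp}$ and $S^{\perp}\subseteq K$ avoids the bookkeeping you describe as delicate.
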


The following result, which is part of \cite[Corollary 8.3.5]{ABFP},
tells us which graded algebras have multiloop realizations
based on finite dimensional simple algebras.

\begin{theorem}
\label{thm:realmult}
Let $\cB$ be a $\Lm$-graded algebra, where
$\Lm$ is a free abelian group of finite rank  $n\ge 1$.
Then $\cB$ is isograded-isomorphic
to $\loopm(\cA,\bsg)$ for some finite dimensional simple (ungraded) algebra $\cA$,
some  $\bsg \in \cfo_n(\cA)$  and
some $\boldm \in \bbZ^n_+$ with $\bsg^\boldm = \boldone$
if and only  if $\cB$ is graded-central-simple, $\cB$
is fgc and $\Gm(\cB)$ has finite index in $\Lm$.
\end{theorem}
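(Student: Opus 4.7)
The strategy is to establish the two directions separately: the forward implication by direct verification from the definition of $\loopm(\cA,\bsg)$, and the reverse implication by a Galois descent argument that reconstructs the data $(\cA, \bsg, \boldm)$ from $\cB$.

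For the forward direction, suppose $\cB = \loopm(\cA,\bsg)$ with $\cA$ a finite-dimensional simple $k$-algebra. The algebra $\cB$ sits inside $\cA \ot_k k[z_1^{\pm 1},\dots,z_n^{\pm 1}]$, and because $\bsg^{\boldm}=\boldone$, each element $1 \ot z_i^{m_i}$ lies in $\cB$ and acts by multiplication on $\cB$ as a centroid element of degree $m_i e_i$, where $\set{e_i}$ is the standard basis of $\Zn$. The subring of $C(\cB)$ generated by these elements and their inverses is a Laurent polynomial ring in $n$ variables over which $\cB$ is free of rank $\dim_k \cA$ (grouping graded components over each coset of $m_1\bbZ\oplus\cdots\oplus m_n\bbZ$), so $\cB$ is fgc and $\Gm(\cB) \supseteq m_1\bbZ\oplus\cdots\oplus m_n\bbZ$ has finite index in $\Zn$. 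Graded-simplicity follows from the simplicity of $\cA$ via the identity $\cA a\cA = \cA$ for any nonzero $a \in \cA$, combined with translations by $1\ot z^\mu$; and $C(\cB)^0 = k$ follows by restricting a degree-zero centroid element to the identity $1 \ot 1$ to produce a central element of $\cA$, which lies in $k$ since $\cA$ is central-simple.

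For the reverse direction, assume $\cB$ is graded-central-simple and fgc with $\Gm := \Gm(\cB)$ of finite index in $\Lm$. By Lemma~\ref{lem:Cdecomp}, $C(\cB) \cong k[\Gm]$ as $\Gm$-graded algebras; and by Smith normal form applied to the inclusion $\Gm \hookrightarrow \Lm$, I choose a $\bbZ$-basis $\gm_1,\ldots,\gm_n$ of $\Lm$ and positive integers $m_1,\ldots,m_n$ with $\set{m_i\gm_i}$ a $\bbZ$-basis of $\Gm$. Identifying $\Lm$ with $\Zn$ via $\gm_i \leftrightarrow e_i$, we have $\Gm = m_1\bbZ\oplus\cdots\oplus m_n\bbZ$ and $\Xi := \Lm/\Gm \cong \bbZ/(m_1)\oplus\cdots\oplus\bbZ/(m_n)$. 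The plan is Galois descent along $k[\Gm] \hookrightarrow k[\Lm]$, whose Galois group is $\hat\Xi := \Hom(\Xi, k^\times) \cong \Xi$ (using that $k$ is algebraically closed of characteristic zero). Form $\tilde\cB := \cB \ot_{C(\cB)} k[\Lm]$, inheriting a $\Lm$-grading from both tensor factors; by fgc, $\tilde\cB$ is free of finite rank over $k[\Lm]$.

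The heart of the argument, and the main technical obstacle, is to establish an isograded-isomorphism $\tilde\cB \cong \cA \ot_k k[\Lm]$ of $\Lm$-graded $k[\Lm]$-algebras for some finite-dimensional simple $k$-algebra $\cA$ carrying a $\Xi$-grading, where the right-hand side receives its $\Lm$-grading by combining the $\Xi$-grading of $\cA$ (lifted through $\Lm \twoheadrightarrow \Xi$) with the standard $\Lm$-grading of $k[\Lm]$. Constructing this trivialization is where one exploits that $k[\Lm]$ is a Laurent polynomial ring over the algebraically closed field $k$: the relevant cocycle obstructions vanish, and a graded version of the Skolem--Noether theorem together with the structure theorem for graded-central-simple algebras over such rings produces $\cA$ intrinsically. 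Once the trivialization is in hand, the Galois action of $\hat\Xi$ on $\tilde\cB$ (acting on the $k[\Lm]$ factor by $\chi\cdot z^\lm = \chi(\bar\lm) z^\lm$ with $\cB$ as fixed ring) transports to an $\hat\Xi$-action on $\cA \ot k[\Lm]$ by graded $k[\Lm]$-algebra automorphisms, which must come from a group homomorphism $\hat\Xi \to \Aut_k(\cA)$. Evaluating at characters $\chi_1,\ldots,\chi_n \in \hat\Xi$ dual to the $e_i$ yields commuting automorphisms $\sg_i \in \Aut_k(\cA)$ with $\sg_i^{m_i}=1$, so $\bsg = (\sg_1,\ldots,\sg_n) \in \cfo_n(\cA)$ satisfies $\bsg^{\boldm}=\boldone$; unwinding the definitions, the $\hat\Xi$-eigenspace decomposition of $\cA$ matches the $\Xi$-grading and $\cB = \tilde\cB^{\hat\Xi}$ is identified with $\loopm(\cA,\bsg)$ as $\Lm$-graded algebras.
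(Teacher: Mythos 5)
First, note that the paper does not prove Theorem \ref{thm:realmult}; it cites it as ``part of [Corollary 8.3.5]\cite{ABFP}.'' So there is no in-paper proof to compare against, and you are effectively being asked to reprove a result from a different paper.

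Your forward direction is sound in outline (though the reference to $1\otimes 1$ should be phrased in terms of the degree-shift endomorphism $x\otimes a\mapsto x\otimes az_i^{m_i}$ rather than an internal unit, since $\cA$ need not be unital). The reverse direction, however, has a genuine gap precisely where you flag it: the trivialization $\tilde\cB\cong\cA\otimes_k k[\Lm]$. Your appeal to ``graded Skolem--Noether'' is out of place here --- Skolem--Noether is an associative-algebra theorem, whereas $\cA$ is an arbitrary finite-dimensional simple algebra (e.g.\ a Lie algebra), and the general nonabelian $H^1$ over the extension $k[\Gm]\hookrightarrow k[\Lm]$ certainly does \emph{not} vanish (the loop algebras themselves furnish nontrivial classes). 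The obstruction that must actually be killed is an $H^2(\Lm,k^\times)$-type cocycle coming from the failure of the units $c_\lm\in C(\tilde\cB)^\lm$ to satisfy $c_\lm c_\mu=c_{\lm+\mu}$. It \emph{does} vanish in this situation, but for a concrete reason you haven't supplied: the group algebra $k[\Lm]$ sits inside $C(\tilde\cB)$ as genuine multiplication operators, giving a canonical multiplicative section of the degree map. Without that observation the step is an assertion, not a proof. The proof in \cite{ABFP} sidesteps descent entirely: using Lemma~\ref{lem:Cdecomp} one takes $\cA := \cB\otimes_{C(\cB)}k = \cB/I\cB$ where $I$ is the augmentation ideal of $C(\cB)\cong k[\Gm]$, shows $\cA$ is finite-dimensional (by Lemma~\ref{lem:fgc}), simple, and $\bar\Lm$-graded, and then exhibits the isomorphism $\cB\cong L_\pi(\cA)$ directly by sending $b\in\cB^\lm$ to $\bar b\otimes z^\lm$, with injectivity and surjectivity coming from the invertible centroid elements $c_\gm$. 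The conversion of the $\bar\Lm$-grading on $\cA$ to a tuple $\bsg$ of commuting automorphisms via dual characters is then carried out essentially as you describe. Your route could be made to work, but you need to replace the hand-waving at Skolem--Noether and cocycle vanishing by the explicit section argument, and you should verify that $C(\tilde\cB)=k[\Lm]$ (rather than something larger) after the base change.
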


We also have necessary and sufficient conditions for
isograded-isomorphism of multiloop algebras $\loopm(\cA,\bsg)$ that  satisfy
$\langle \supp_{\Zn}(\loopm(\cA,\bsg)) = \Zn$.

\begin{theorem}
\label{thm:isomult} Suppose
$\cA$ is an (ungraded) central-simple algebra
over~$k$,  $\bsg =  (\sg_1,\dots,\sg_n) \in \cfo_n(\cA)$,
$\boldm = (m_1,\dots,m_n) \in \bbZ^n_+$ with $\bsg^\boldm = \boldone$,
and $\left\vert\langle\sg_{1},\ldots,\sg_n\rangle\right\vert
=m_{1}\cdots m_n$. Suppose also that $\boldm'$,
$\cA'$ and $\bsg'$ satisfy the same assumptions.
Then $\loopm(\cA,\bsg)$ is isograded-isomorphic to $\loopmp(\cA',\bsg')$
if and only if there exists a matrix $P =(p_{ij}) \in\GL_n(\bbZ)$ and an algebra isomorphism
$\ph : \cA\to \cA'$ such that
\begin{equation*}
      \sg'  = \ph \bsg^P \ph^{-1}.
\end{equation*}
\end{theorem}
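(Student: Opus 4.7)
The plan is to prove the biconditional by handling the easier $(\Leftarrow)$ direction directly and then, for $(\Rightarrow)$, extracting $(P,\ph)$ from the structure of the centroid and ambient algebra, following the general approach of \cite{ABFP}.

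For $(\Leftarrow)$, I would build the desired isograded-isomorphism as a composition of two pieces corresponding to $P$ and $\ph$. For the first piece, I would show that for any $P\in\GL_n(\bbZ)$ there is an isograded-isomorphism $\loopm(\cA,\bsg)\to\loopmp(\cA,\bsg^P)$ (for the $\boldm'$ forced by the orders of the entries of $\bsg^P$) whose grading isomorphism is the one given by the matrix $P$; the underlying map comes from an explicit substitution on the ambient Laurent polynomial ring, and the verification is a direct computation comparing how $\bsg$ and $\bsg^P$ act on the homogeneous pieces of $\cA$. For the second piece, the hypothesis $\bsg'=\ph\bsg^P\ph^{-1}$ says precisely that $\ph$ carries the $\bsg^P$-eigenspace decomposition of $\cA$ onto the $\bsg'$-eigenspace decomposition of $\cA'$, so $\ph\otimes\id$ restricts to an isograded-isomorphism $\loopmp(\cA,\bsg^P)\to\loopmp(\cA',\bsg')$ with the identity grading isomorphism. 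Composing the two pieces produces the required bi-isomorphism.

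For $(\Rightarrow)$, let $\psi:\loopm(\cA,\bsg)\to\loopmp(\cA',\bsg')$ be an isograded-isomorphism. Since the hypothesis on orders guarantees $\langle\supp_\Zn(\loopm(\cA,\bsg))\rangle=\Zn$ by Lemma~\ref{lem:multprop}, the grading isomorphism $\psi_{\mathrm{gr}}:\Zn\to\Zn$ is uniquely determined by $\psi$, and I would take $P\in\GL_n(\bbZ)$ to be its matrix. To produce $\ph:\cA\to\cA'$, I would extend $\psi$ to an isomorphism $\tilde\psi$ of the ambient algebras $\cA\otimes_k k[z_1^{\pm 1},\dots,z_n^{\pm 1}]$ and $\cA'\otimes_k k[{z'_1}^{\pm 1},\dots,{z'_n}^{\pm 1}]$. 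The extension exists because $\cA$ is central-simple, the centroid $C(\loopm(\cA,\bsg))$ embeds as a Laurent polynomial subalgebra of $k[z_1^{\pm 1},\dots,z_n^{\pm 1}]$ (see \cite{ABFP}), and the ambient algebra is recovered from $\loopm(\cA,\bsg)$ by base change along this inclusion of centroids; $\psi$ then extends canonically once the induced isomorphism of centroids is extended to an isomorphism of the full Laurent rings in a way compatible with $P$. The restriction of $\tilde\psi$ to the degree-zero piece $\cA\otimes 1$ of the natural $\Zn$-grading on the ambient algebra then yields $\ph$, and the required relation $\bsg'=\ph\bsg^P\ph^{-1}$ follows by tracking how $\tilde\psi$ transports the $\bsg$-eigenspace decomposition to the $\bsg'$-eigenspace decomposition, accounting for the twisting by $P$.

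The main obstacle is the extension step in the $(\Rightarrow)$ direction; this rests on the general theory of central closures of graded-central-simple algebras from \cite{ABFP}, and once it is granted, everything else reduces to bookkeeping with eigenspaces under the $\GL_n(\bbZ)$-action on tuples of commuting automorphisms.
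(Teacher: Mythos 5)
Your proposal is correct in outline, but it takes a genuinely different route from the paper. The paper's proof is a single citation: after observing (via Lemma~\ref{lem:multprop}) that the hypothesis $\left\vert\langle\sg_1,\dots,\sg_n\rangle\right\vert = m_1\cdots m_n$ forces $\langle\supp_{\Zn}(\loopm(\cA,\bsg))\rangle = \Zn$, and likewise for the primed data, the statement is read off directly as the second assertion of Theorem~8.3.2(ii) in \cite{ABFP}. You, by contrast, sketch a self-contained re-derivation of that ABFP result: for $(\Leftarrow)$ you factor the desired isograded-isomorphism as a Laurent-ring substitution realizing the $\GL_n(\bbZ)$-action on $\Zn$ followed by $\ph\otimes\id$, and for $(\Rightarrow)$ you read $P$ off the grading isomorphism (which is unique precisely because the support spans $\Zn$), extend $\psi$ to the ambient untwisted algebras via the centroid/central-closure machinery, and obtain $\ph$ by restricting to the degree-zero part. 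This is essentially the internal structure of the ABFP proof. What the paper's approach buys is economy and precision — it offloads the delicate parts (that $C(\loopm(\cA,\bsg))$ is a Laurent polynomial ring, that the ambient algebra is recovered by base change, that the centroid isomorphism extends compatibly with $P$) to an already-proved theorem. What your approach buys is transparency: one sees directly where $P$ and $\ph$ come from and why the eigenspace bookkeeping produces $\bsg' = \ph\bsg^P\ph^{-1}$. If you wanted to turn your sketch into a complete proof, the main labor would be in the extension step of $(\Rightarrow)$, which is exactly the content of \S 8 of \cite{ABFP}; as written, that step is asserted rather than proved, so at the level of detail presented you have not actually shortened the route.
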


\begin{proof}  Note that by Lemma \ref{lem:multprop}
we have $\langle \supp_{\Zn}(\loopm(\cA,\bsg))\rangle =\Zn$
as well as the corresponding primed statement.  The theorem
then is the second statement of \cite[Theorem 8.3.2(ii)]{ABFP}.
\end{proof}

\begin{remark}
\label{rem:GP} The action  $(\sg,P) \mapsto \sg^P$ of $\GL(n,\bbZ)$ on
$\cfo_n(\cA)$ also plays an important role in the study of the isomorphism problem
for multiloop algebras regarded as (ungraded) algebras  \cite[Lemma 5.3]{GP}.
\end{remark}

\subsection{The construction of multiloop Lie tori}
\label{subsec:construction}

We next  want to describe a multiloop construction of a
centreless Lie torus $\LT(\fs,\sg,\fh)$ that we will call a multiloop Lie $\Zn$-torus.
For technical reasons we begin with a more general construction of a $Q\times\Zn$-graded Lie algebra
that we denote by $\LT_\boldm(\fs,\sg,\fh)$.    (LT in this notation  is once again
an abbreviation
for Lie torus.)

If $\fs$ is an algebra and $\bsg = (\sg_1,\dots,\sg_n) \in \cfo_n(\fs)$, we let
\[\fs^\bsg = \set{x\in \fs \suchthat \sg_i(x) = x \text{ for } 1\le i \le n}\]
denote the fixed point subalgebra for $\langle\bsg\rangle$ in $\fs$.

\begin{construction}
\label{con:LTconstruction}
Let $\fs$ be a finite dimensional simple Lie algebra
over $k$; let   $\bsg= (\sg_1,\dots,\sg_n) \in \cfo_n(\fs)$;
let $\fh$ be an ad-diagonalizable subalgebra
of $\fs$ that is contained in $\fs^\bsg$;
and let $\boldm = (m_1,\dots,m_n) \in \bbZ^n_+$ with $\bsg^\boldm = \boldone$.
We  now construct a $Q\times \Zn$-graded algebra $\LT_\boldm(\fs,\bsg,\fh)$
from this data, where
\begin{equation}
\label{eq:DeQ}
\De = \De(\fs,\fh) \andd Q = \spann_\bbZ(\De).
\end{equation}
(However, with our assumptions so far, $\De$ is not necessarily a root system.)

Set  $\bLm =\bbZ/(m_1)\oplus \dots \oplus \bbZ/(m_n)$.
Then $\fs$ is $Q$-graded using the root space decomposition
of $\fs$ relative to the adjoint action of $\fh$ (see
Definition \ref{def:rootspace}), and $\fs$ is $\bLm$-graded using the
$\bLm$-grading determined by $\bsg$ (see Definition \ref{def:multiloop}).
These gradings are compatible and so they determine a $Q\times \bLm$-grading
$\fs = \oplus_{(\al,\blm)\in Q\times\bLm} \fs_\al^\blm$ on $\fs$.
Define the algebra
\[\LT_\boldm(\fs,\bsg,\fh) = \textstyle \sum_{\lm \in \Zn}
\fs^{\blm}\ot z^\lm
\]
in $\fs\ot k[z_1^{\pm 1},\dots,z_n^{\pm 1}]$, and give this algebra a $Q\times \Zn$-grading
by setting
\begin{equation}
\label{eq:LTgrading}
\LT_\boldm(\fs,\bsg,\fh)_\al^\lm = \ds{\al}{\blm}\ot z^\lm
\end{equation}
for $\al\in Q$ and $\lm \in \Zn$.
Thus, the $Q$-grading on $\LT_\boldm(\fs,\bsg,\fh)$ is the root space
decomposition of
$\LT_\boldm(\fs,\bsg,\fh)$ relative to the ad-diagonalizable subalgebra $\fh\ot 1$.
As a
$\Zn$-graded algebra $\LT_\boldm(\fs,\bsg,\fh)$ is the multiloop
algebra $\loopm(\fs,\bsg)$ defined in Definition \ref{def:multiloop}.
\end{construction}

\begin{remark}
\label{rem:LTconstruction}
Suppose that $\fs$,  $\bsg$, $\fh$ and $\boldm$
are as in Construction \ref{con:LTconstruction}.

(i) For technical reasons, we only assumed in the construction that
$\sg_i$ has period $m_i$.  However, we will see below that a necessary condition
for $\LT_\boldm(\fs,\bsg,\fh)$ to be a Lie torus is that
$\order{\sg_i} = m_i$ for all $i$,  at which point we will
simplify notation by deleting the subscript  $\boldm$.

(ii) One choice of $\fh$ satisfying the requirements in the construction is a Cartan subalgebra of
$\fs^\bsg$ \cite[Thms. III.10 and III.17]{J}.  Conversely, we will see below that
a necessary condition for $\LT_\boldm(\fs,\bsg,\fh)$ to be a Lie torus is that
$\fh$  is a Cartan subalgebra of $\fs^\bsg$.  Again, for technical reasons we do not make
that assumption
at the outset.

(iii) $\LT_\boldm(\fs,\bsg,\fh)$ is fgc and centreless.  (In fact it is fgc and $\Zn$-graded-central-simple
by Theorem \ref{thm:realmult}.)
\end{remark}

In order to describe conditions  for $\LT_\boldm(\fs,\bsg,\fh)$ to be a
Lie torus, it will be convenient to introduce the following terminology for modules.

\begin{definition}
\label{def:condM}
If $\fg$ is a finite dimensional simple Lie algebra, $\fh$ is a Cartan
subalgebra of $\fg$  and
$V$ is a finite dimensional module for $\fg$, we say that $V$
\emph{satisfies condition \emph{(M)}} if
\begin{itemize}
\item[(M)]  $V$ is irreducible of dimension $>1$ and
the weights of $V$ relative to $\fh$ are contained in $\De(\fg,\fh)_\mathrm{en}$,
where $\fh$ is a Cartan subalgebra of $\fg$.
\end{itemize}
(See \eqref{not:rootsystem} for the  notation $\De(\fg,\fh)_\mathrm{en}$.)
Note  that the validity of condition (M) does not depend on the
choice of $\fh$.
\end{definition}

It is easy to list the modules satisfying this condition:

\begin{lemma}
\label{lem:3mod}
Suppose that $\fg$ is a finite dimensional Lie simple algebra with Cartan
subalgebra $\fh$.  We fix a base for the root system $\De_\fg = \De(\fg,\fh)$.
For a dominant integral weight $\mu$ for $\fg$ (relative to $\fh$ and the chosen base),
let $V(\mu)$ be the irreducible module with highest weight $\mu$.  Let
$\theta$ and $\theta_\mathrm{sh}$ denote the highest root and the highest short root
of $\fg$ respectively.

If $V$ is a finite dimensional $\fg$-module, then $V$ satisfies condition (M) if and only if
$V$ is isomorphic to one of the following:
\begin{itemize}
\item[(i)] The adjoint module $V(\theta)$,
\item[(ii)] The little adjoint module $V(\theta_\mathrm{sh})$ if $\De_\fg$ is not simply laced,
\item[(iii)] The symmetric module $V(2\theta_\mathrm{sh})$ if $\De_\fg$ is of type B$_\ell$ with $\ell \ge 1$.
\end{itemize}
Moreover, if $V$ satisfies condition (M), then
the multiplicity of each nonzero weight in $V$ is 1.
Also, if $V$ and $V'$ satisfy condition (M), then $V$ and $V'$ have a nonzero weight in common.
\end{lemma}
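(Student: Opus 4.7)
The plan is to reduce to a highest-weight classification and then verify a short list of candidates. Since $V$ is finite dimensional and irreducible with $\dim V > 1$, it has a unique nonzero dominant-integral highest weight $\mu$, and $\mu \in \Deen$ by hypothesis. Because $\Deen$ lies in the root lattice (hence in the weight lattice), integrality is automatic; I need only list the dominant nonzero elements of $\Deen$. The unique dominant root of $\De$ of maximal length is $\theta$; if $\De$ is non-simply laced there is moreover a unique dominant short root $\theta_\mathrm{sh}$ (in the simply-laced case $\theta_\mathrm{sh} = \theta$); and in type B the extra summand $2\Desh$ contributes the further dominant element $2\theta_\mathrm{sh}$. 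Thus $\mu \in \{\theta,\, \theta_\mathrm{sh},\, 2\theta_\mathrm{sh}\}$, with availability exactly as in (i)--(iii).

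Next I verify that each such $V(\mu)$ really has all of its weights in $\Deen$. For the adjoint $V(\theta)$ the weight set is $\De$. For the little adjoint $V(\theta_\mathrm{sh})$ in the non-simply-laced case, a standard computation gives weight set $\Desh \cup \{0\} \subseteq \De$. For $V(2\theta_\mathrm{sh})$ in type B I split into two subcases: for $\De$ of type $A_1$ this is the five-dimensional $\sltwo$-module with weight set $\{0, \pm\alpha, \pm 2\alpha\}$; for $\mathfrak{so}_{2\ell+1}$ with $\ell \ge 2$, I realize $V(2e_1)$ as the traceless part of $\operatorname{Sym}^2$ of the standard $(2\ell+1)$-dimensional module, whose weight set is $\{0\} \cup \{\pm e_i\} \cup \{\pm 2e_i\} \cup \{\pm e_i \pm e_j : i \ne j\}$, all lying in $\De \cup 2\Desh = \Deen$.

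The remaining assertions are bookkeeping. Multiplicity one for nonzero weights holds orbit-by-orbit: the adjoint of a simple Lie algebra has one-dimensional root spaces; the nonzero weights of $V(\theta_\mathrm{sh})$ form a single $W$-orbit through the multiplicity-one highest weight, and multiplicities are $W$-invariant; and in type B the same reasoning applies to each of the $W$-orbits of nonzero weights of $V(2\theta_\mathrm{sh})$, with a representative of multiplicity one supplied by the symmetric-square realization (or by the $\sltwo$ weight string in the $A_1$ case). For the common-weight statement, every module on the list contains an element of $\Desh$ among its nonzero weights---$V(\theta)$ because $\Desh \subseteq \Dec$, $V(\theta_\mathrm{sh})$ by construction, and $V(2\theta_\mathrm{sh})$ because $\theta_\mathrm{sh}$ itself (namely $\alpha$ in type $A_1$, or $e_1$ in type $B_\ell$) appears in the weight sets computed above---so any two modules satisfying (M) share at least one short root. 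The only genuine computation is the weight set of $V(2\theta_\mathrm{sh})$ in type B; the rest is standard.
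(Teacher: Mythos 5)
Your proof is correct and follows essentially the same route as the paper's: the paper observes that the Weyl-group orbits on $(\De_\fg)_\mathrm{en}$ are represented by $0$, $\theta$, $\theta_\mathrm{sh}$, $2\theta_\mathrm{sh}$ (equivalently, that these are the dominant elements, which is how you phrase it), then cites a reference for the computation of weights and multiplicities of the three candidate modules. You simply carry out that verification explicitly --- spelling out the weight sets $\De$, $\Desh\cup\{0\}$, and (for type B) the traceless symmetric square of the standard module --- rather than citing \cite[Chapter 2]{ABG}.
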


\begin{proof}
The first statement follows from the fact
that the orbits under the Weyl group of $\De_\fg$ acting
on $(\De_\fg)_\mathrm{en}$ are represented
by $0$, $\theta$,  $\theta_\mathrm{sh}$ (if $\De_\fg$ is not simply laced),
and $2\theta_\mathrm{sh}$ (if $\De_\fg$ is of type B$_\ell$ with $\ell \ge 1$).
The last two statements follow from  the first statement
and a calculation of the weights and multiplicities
of the modules in (i)--(iii) (see for example \cite[Chapter 2]{ABG}).
\end{proof}

In Construction \ref{con:LTconstruction}, if $\fg = \fs^\bsg$ and
$\blm\in \bLm$ ,
then $\fs^\blm$ is a $\fg$-module under the adjoint  action.

\begin{proposition}
\label{prop:N}
Let $\fs$, $\bsg$,  $\fh$ and $\boldm$ be as
Construction \ref{con:LTconstruction}, and define $\De$ and $Q$  by \eqref{eq:DeQ}.
Put $\fg = \fs^\bsg$ and $\De_\fg = \De(\fg,\fh)$.
Then, the following statements are equivalent:
\begin{itemize}
\item[(a)] $\De$ is a  finite irreducible root system in $\fh^*$ and $\LT_\boldm(\fs,\bsg,\fh)$
is a Lie $\Zn$-torus of type $\De$.
\item[(b)] $\fh$ is a Cartan subalgebra of $\fg$, $m_i = \order{\sg_i}$ for each $i$,
and $\bsg$ satisfies the following conditions:
\begin{itemize}
\item[(A1)] $\fg$ is a simple Lie algebra.
\item[(A2)] If $0\ne \blm\in \supp_{\bLm}(\fs)$,  then,
$\fs^\blm \simeq U^\blm \oplus V^\blm$ as a $\fg$-module,
where $\fg$ acts trivially on $U^\blm$ and either $V^\blm = 0$ or
$V^\blm$ satisfies condition (M).
\item[(A3)] $\order{\langle \sg_1,\dots,\sg_n \rangle} =
\prod_{i=1}^n \order{\sg_i}$.
\end{itemize}
\end{itemize}
Moreover, if (a) and (b) hold, then $\LT_\boldm(\fs,\bsg,\fh)$ satisfies
(LT5), $(\fg\ot 1,\fh\ot 1)$ is the root grading pair for $\LT_\boldm(\fs,\bsg,\fh)$,
and
\begin{equation}
\label{eq:propN2}
\Dei = \De_\fg.
\end{equation}
In fact, in that case,
\begin{equation}
\label{eq:propN3}
\De = \De_\fg \orr \De =  (\De_\fg)_\mathrm{en},
\end{equation}
and $\De \ne \De_\fg$ if and only if $\fg$ has type B$_\ell$ for some $\ell \ge 1$
and  $V^\blm$
is isomorphic to the symmetric  module  for some $0\ne \blm\in\supp_\bLm(\fs)$.
\end{proposition}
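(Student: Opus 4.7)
The plan is to prove the equivalence (a)$\Leftrightarrow$(b) in two directions, and then read off the additional assertions about (LT5), the root grading pair, and \eqref{eq:propN2}--\eqref{eq:propN3} from the analysis used in the two directions.

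For (b)$\Rightarrow$(a), assume the stated conditions on $\bsg$, $\boldm$, and $\fh$.  First I would identify $\De = \De(\fs,\fh)$ as a root system by decomposing $\fs = \bigoplus_\blm \fs^\blm$ as a $\fg$-module: the piece $\fs^0 = \fg$ contributes $\De_\fg$, while for $\blm\ne\modz$ condition (A2) together with Definition \ref{def:condM} forces the $\fh$-weights of $\fs^\blm$ into $(\De_\fg)_\mathrm{en}$.  Hence $\De_\fg\subseteq\De\subseteq(\De_\fg)_\mathrm{en}$, so $\De$ is a finite irreducible root system in $\fh^*$ with $\Dei = \De_\fg$ and $\De\in\{\De_\fg,(\De_\fg)_\mathrm{en}\}$.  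The Lie torus axioms for $\LT_\boldm(\fs,\bsg,\fh)$ would then be checked in turn: (LT1) is immediate; (LT4) follows from (A3) via Lemma \ref{lem:multprop}; (LT2)(i) holds because $\fs_\al^0 = \fg_\al \ne 0$ for $\al\in\De_\fg^\times = \Deic$; the one-dimensionality condition in (LT2)(ii) follows from the multiplicity-one assertion of Lemma \ref{lem:3mod}; and (LT3) is automatic from the graded-simplicity in Remark \ref{rem:LTconstruction}(iii) combined with Remark \ref{rem:LT3}.  For the $\sltwo$-triple relation \eqref{eq:basic}, given nonzero $e\in\fs_\al^\blm$, $f\in\fs_{-\al}^{-\blm}$, one has $[e,f]\in\fg\cap\fs_0 = \fg^\fh = \fh$, and the identity $([e,f],h) = \al(h)(e,f)$ for $h\in\fh$ (using the Killing form of $\fs$, which is nondegenerate on any toral subalgebra) identifies $[e,f]$ as a nonzero scalar multiple of $\al^\vee$, which can then be normalized.

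For the converse (a)$\Rightarrow$(b), the root grading pair of $\LT := \LT_\boldm(\fs,\bsg,\fh)$, which is centreless by Remark \ref{rem:LTconstruction}(iii), does the heavy lifting.  By Proposition \ref{prop:Lietorusbasic}(vi), $\LT^0 = \fg\ot 1 = \fg_\mathrm{pair}\ot 1$ and $\LT_0^0 = \fg^\fh\ot 1 = \fh_\mathrm{pair}\ot 1$; hence $\fg = \fs^\bsg$ is finite dimensional split simple (giving (A1)) and $\fh_\mathrm{pair} = \fg^\fh$ is a Cartan of $\fg$ containing $\fh$.  Since $\De^\times$ spans $\fh^*$ (as a root system there) and $\Dei^\times$ spans $\fh_\mathrm{pair}^*$, a rank count forces $\dim\fh = \dim\fh_\mathrm{pair}$, so $\fh = \fh_\mathrm{pair}$.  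The relations $m_i = \order{\sg_i}$ and (A3) now follow from Lemma \ref{lem:multprop} applied to (LT4), since $\order{\langle\bsg\rangle} \le \prod\order{\sg_i} \le \prod m_i = \order{\langle\bsg\rangle}$ forces equality throughout.  For (A2), decompose each $\fs^\blm$ into $\fg$-irreducibles; one-dimensionality of $\LT_\al^\lm$ for $\al\ne 0$ translates to multiplicity one for each nonzero $\fh$-weight of $\fs^\blm$, and the general root-system fact that a finite irreducible root system containing the reduced irreducible $\De_\fg$ must lie inside $(\De_\fg)_\mathrm{en}$ places every nontrivial irreducible summand into the hypotheses of Definition \ref{def:condM}.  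The final assertion of Lemma \ref{lem:3mod} then forces at most one such nontrivial summand per $\blm$.

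Once both directions are established, the remaining statements are read off directly: (LT5) holds because $\supp_Q(\LT) = \De(\fs,\fh) = \De$ by construction; the root grading pair is $(\fg\ot 1,\fh\ot 1)$; \eqref{eq:propN2} and \eqref{eq:propN3} were recorded in the course of the argument; and the case $\De\ne\De_\fg$ arises precisely when some $V^\blm$ realizes a weight $2\beta$ with $\beta\in\Desh$, which by Lemma \ref{lem:3mod} requires $\fg$ to have type $\mathrm{B}_\ell$ and $V^\blm$ to be the symmetric module.  The main technical obstacle I anticipate is the $\sltwo$-triple normalization in (b)$\Rightarrow$(a): one must show that \emph{any} choice of nonzero $e\in\fs_\al^\blm$ and $f\in\fs_{-\al}^{-\blm}$ produces $[e,f]\in k\al^\vee\setminus\{0\}$, which reduces to showing that the Killing form of $\fs$ pairs the homogeneous pieces $\fs_\al^\blm$ and $\fs_{-\al}^{-\blm}$ nondegenerately.
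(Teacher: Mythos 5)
Your proposal follows essentially the same strategy as the paper's proof: in the direction (b)$\Rightarrow$(a) you decompose $\fs$ into $\fg$-modules, invoke Lemma~\ref{lem:3mod} for multiplicity-one and weight constraints, Lemma~\ref{lem:multprop} for (LT4), graded-simplicity plus Remark~\ref{rem:LT3} for (LT3), and a Killing-form argument for the $\sltwo$-triples in (LT2)(ii); in (a)$\Rightarrow$(b) you use Proposition~\ref{prop:Lietorusbasic}(vi) to identify the root grading pair, a rank count to force $\fh = \fg^\fh$, and Lemma~\ref{lem:multprop} to pin down (A3) and $m_i = \order{\sg_i}$. The paper adds one organizational step you skip (it first shows $\fs$ itself is a Lie $\bLm$-torus and then transfers the axioms to $\loopm(\fs,\bsg)$), but that difference is cosmetic.

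One step in your (a)$\Rightarrow$(b) argument is stated incorrectly. You appeal to ``the general root-system fact that a finite irreducible root system containing the reduced irreducible $\De_\fg$ must lie inside $(\De_\fg)_\mathrm{en}$''. This is false: for instance $\mathrm{A}_2 \subset \mathrm{G}_2$ or $\mathrm{D}_\ell\subset \mathrm{B}_\ell$ in the same ambient space. The true statement, and what the argument needs, is that $\Dei = \De_\fg$, which then forces $\De = \De_\fg$ or $\De = (\De_\fg)_\mathrm{en}$ by the classification of irreducible root systems. The paper obtains $\Dei = \De_\fg$ directly from (LT2)(i): for $\al\in\Deic$ one has $0\ne\cL_\al^0 = \fg_\al\ot 1$, so $\Deic\subseteq\De_\fg^\times$, and reducedness of $\De_\fg$ together with $\De_\fg\subseteq\De$ gives the reverse inclusion. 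Your rank-count identification of $\fh$ with $\fh_\mathrm{pair}$ does put you in a position to recover this (via Proposition~\ref{prop:Lietorusbasic}(iii)--(iv), the abstract isomorphism $\al\mapsto\tilde\al$ becomes the identity on $\fh^*$ once $\fh=\fh_\mathrm{pair}$, so $\Dei$ literally equals $\De(\fg,\fh)$), but you should spell that out, or use the paper's direct argument from (LT2)(i), rather than rely on the false containment fact as stated.
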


\begin{proof} Let $\cL = \LT_\boldm(\fs,\bsg,\fh)$.

``(b)$\Rightarrow$(a)'' Assume that (b) holds. Now $\De$ is the union
of the sets of weights of the $\fg$-modules $\fs^\blm$, $\blm\in
\supp_\bLm(\fs)$. So it follows from (A1), (A2) and Lemma
\ref{lem:3mod}  that we have the last statement of the lemma. It
then follows from \eqref{eq:propN3} that \eqref{eq:propN2}  holds.

We claim next that  $\fs$ is a Lie $\bLm$-torus of type $\De$ satisfying
(LT5). So we must
check (LT1)--(LT5) for $\fs$.
First since $\De = \De(\fs,\fh)$ and since
$\Dei = \De_\fg$, (LT1) and (LT2)(i) are clear.
Thus, since $\fs$ is simple, (LT3) also holds for $\fs$ by Remark
\ref{rem:LT3}.
Next, by assumption (A3) and Lemma \ref{lem:multprop}, we have
$\langle \supp_\Zn(\loopm(\fs,\bsg))\rangle = \Zn$,
so, by \eqref{eq:multgrad},  $\langle \supp_\bLm(\fs)\rangle = \bLm$.
Thus, (LT4) holds for $\fs$.
Also, again since
$\De = \De(\fs,\fh)$, (LT5) holds for $\fs$.
So we are left only with (LT2)(ii) for $\fs$.
For this, note first that by (A2) and Lemma \ref{lem:3mod},
if $\al\in \Dec$ and $0\ne \blm\in \supp_{\bLm}(\fs)$,
the  multiplicity of
the $\al$-weight space in the module $\fs^\blm$ is  $\le 1$.
But this is also true if $\blm = \bar 0$.   So
\begin{equation}
\label{eq:Kdim1}
\dim(\ds{\al}{\blm}) \le 1
\end{equation}
for $\al\in \Dec$, $\blm\in \bLm$. Next let $\form_{\fs}$ be the
Killing form on $\fs$.  Then $\form_{\fs}$ is a nondegenerate
invariant symmetric bilinear form on $\fs$. Furthermore, the
automorphisms $\sg_1,\dots,\sg_n$ preserve this form and
$\ad_\fs(h)$ is a skew transformation relative to this form for
$h\in \fh$.  Hence it follows that
\begin{equation}
\label{eq:paired}
(\ds{\al}{\blm},\ds{\beta}{\bar \mu})_\fs = 0
\quad\text{unless $\al+\beta = 0$  and  $\blm + \bar\mu = 0$}
\end{equation}
for $\al,\beta\in Q$ and $\blm,\bar\mu\in \bLm$. Now let
$(\al,\blm)\in \supp_{Q\times\bLm}(\fs)$ with $\al\in \Dec$. Then,
by \eqref{eq:paired} and the nondegeneracy of $\form_{\fs}$, we have
$(-\al,-\blm)\in \supp_{Q\times\bLm}(\fs)$.  So we can choose $0\ne
e_{\al}^{\blm} \in \ds{\al}{\blm}$ and $0\ne f_{\al}^{\blm} \in
\ds{-\al}{-\blm}$, in which case we have $\ds{\al}{\blm} =
ke_{\al}^{\blm}$ and $\ds{\al}{\blm} = kf_{\al}^{\blm}$ by
\eqref{eq:Kdim1}. Next we set $h_\al^{\blm} =
[e_{\al}^{\blm},f_{\al}^{\blm}] \in \ds{0}{\bar 0} = \fh$. Then, if
$h\in \fh$, we have
\[(h,h_\al^{\blm})_\fs = (h,[e_{\al}^{\blm},f_{\al}^{\blm}])_\fs =
([h,e_{\al}^{\blm}],f_{\al}^{\blm})_\fs
 = \al(h)(e_{\al}^{\blm},f_{\al}^{\blm})_\fs\]
Thus, $h_\al^{\blm}$ is a nonzero multiple of $\al^\vee$, so, adjusting
$f_{\al}^{\blm}$ by a nonzero scalar, we can assume that
$h_\al^{\blm} = \al^\vee$.  Therefore we have
(LT2)(ii) for $\fs$.  Thus, as claimed,
$\fs$ is a Lie $\bLm$-torus of type $\De$ satisfying
(LT5).

To see that
$\cL$ is a $\Zn$-Lie torus of type $\De$ that also satisfies (LT5),
we must check the axioms (LT1)--(LT5) for $\cL$.
Indeed,  by \eqref{eq:LTgrading}, if $(\al,\lm)\in Q\times \Zn$, we have
\[(\al,\lm)\in \supp_{Q\times\Zn}(\cL) \iff (\al,\blm)\in \supp_{Q\times\bLm}(\fs). \]
Using this fact, the axioms (LT1), (LT2), (LT4) and (LT5) for $\cL$
follow easily from the corresponding axioms for $\fs$.  Finally,
since $\cL$ is a graded-simple $\Zn$-graded algebra by
Theorem \ref{thm:realmult}, $\cL$ satisfies
(LT3) by Remark \ref{rem:LT3}. Thus, as claimed,
$\cL$ is a Lie $\Zn$-torus  of type $\De$ satisfying
(LT5).

So we have established (a)  as well as \eqref{eq:propN2} and \eqref{eq:propN3}.
Also, by Proposition \ref{prop:Lietorusbasic}(vi), the root grading
pair  for $\cL$ is $(\cL^0,\cL_0^0) = (\fs^\modz\ot 1,\fs_0^\modz \ot 1) = (\fg \ot 1,\fh\ot 1)$.

``(a)$\Rightarrow$(b)'' Suppose (a) holds.  By Proposition \ref{prop:Lietorusbasic}(vi),
the grading pair for $\cL$ is $(\cL^0,\cL_0^0) = (\fg\ot 1,\fs_0^\modz\ot 1)$.
So, by Proposition \ref{prop:Lietorusbasic} (ii) and (iii), $\fg$ is simple, $\fs_0^\modz$ is a Cartan
subalgebra of $\fg$, and $\dim(\fs_0^\modz) = \rank(Q)$.  But, since
$\De$ is a root system in $\fh^*$, we have
$\dim(\fh) = \rank(Q)$. Hence, since $\fs_0^\modz\supseteq\fh$, we have $\fs_0^\modz =\fh$.
Thus, $\fg$ is simple and $\fh$ is a Cartan subalgebra of $\fg$.

By (LT4), $\Zn = \langle \supp_\Zn(\cL)\rangle$, so, by Lemma \ref{lem:multprop},
$\order{\langle \sg_{1},\ldots,\sg_n\rangle} =m_{1}\cdots m_n$.
Thus we have $m_i =\order{\sg_i}$, and (A1) and (A3) hold.

Next, if $\al\in \Deic$, then,  by (LT2)(i), we have
$0\ne \cL_\al^0 = \fs_\al^\modz\ot 1 = \fg_\al\ot 1$, so $\al\in \De_\fg$.  Therefore,
$\Dei \subseteq \De_\fg$, so  $\Dei \subseteq \De_\fg\subseteq \De$.
Since $\De_\fg$ is reduced, we have $\Dei = \De_\fg$.  So
$\De = \De_\fg$  or $\De = (\De_\fg)_\mathrm{en}$.

Now to prove (A2) let $0\ne \blm \in \supp_{\bLm}(\fs)$.  Since
$\De \subseteq (\De_\fg)_\mathrm{en}$, the weights
of the $\fg$-module $\fs^\blm$ are contained
in $(\De_\fg)_\mathrm{en}$.
It therefore remains to show that this $\fg$-module has at most one
irreducible component of dimension $\ge 2$.
Suppose the contrary.
Then, by Lemma \ref{lem:3mod}, the $\al$-weight space
in $\fs^\blm$ has dimension $>1$ for some $\al\in \Dec$.
So $\dim \ds{\al}{\blm} \ge 2$ and hence
(by \eqref{eq:LTgrading}) $\dim \dL{\al}{\lm} \ge 2$,
contradicting   (LT2)(ii).
\end{proof}

\begin{definition}
\label{def:multLT}
Suppose that $\fs$ is a finite dimensional simple Lie algebra
over $k$, $\bsg = (\sg_1,\dots,\sg_n)$ is an sequence in $\cfo_n(\fs)$
satisfying (A1)--(A3), and $\fh$ is a Cartan subalgebra
of $\fg = \fs^\bsg$.  Let $\boldm = (m_1,\dots,m_n)$,
where $m_i = \order{\sg_i}$, $\De = \De(\fs,\fh)$ and $Q = \spann_\bbZ(\De)$.
Since $\boldm$ is determined by $\bsg$, we simplify notation and write
\[\LT(\fs,\bsg,\fh) := \LT_\boldm(\fs,\bsg,\fh).\]
Then, by Proposition \ref{prop:N} and Remark
\ref{rem:LTconstruction}(iii),
$\LT(\fs,\bsg,\fh)$ is an fgc centreless $\Zn$-Lie torus of type $\De$.
We call $\LT(\fs,\bsg,\fh)$ the \emph{multiloop Lie $\Zn$-torus}
(or simply the multiloop Lie torus)  determined by  $\fs$, $\bsg$ and $\fh$.
\end{definition}

\begin{remark}
\label{rem:Cartaninvariance} Suppose that we have the assumptions
and notation of Definition \ref{def:multLT}. Suppose that $\fh'$ is
another Cartan subalgebra of $\fg$. Note that $\LT(\fs,\bsg,\fh)$
and $\LT(\fs,\bsg,\fh')$ are equal as $\Zn$-graded algebras.  Hence,
by Proposition \ref{prop:torusequiv}, $\LT(\fs,\bsg,\fh)$ and
$\LT(\fs,\bsg,\fh')$ are bi-isomorphic as Lie tori.  Thus, up to
bi-iso\-mor\-phism, the Lie torus $\LT(\fs,\bsg,\fh)$ does not depend on
the choice of the Cartan subalgebra $\fh$ of $\fg$.
\end{remark}

\begin{example}
\label{ex:trivLT}
Suppose that $\fs$ is a finite dimensional simple Lie  algebra,
$\boldone = (1,\dots,1)$, and $\fh$ is a Cartan subalgebra
of  $\fs$.  Then, $\LT(\fs,\boldone,\fh) = \fs \ot k[z_1^{\pm1 },\dots,z_n^{\pm 1}]$
is a multiloop Lie torus called the  \emph{untwisted multiloop Lie $\Zn$-torus}
determined by $\fs$ and $\fh$.
\end{example}

\subsection{The realization theorem}

We can now prove  our first main result.

\begin{theorem}\emph{[Realization Theorem for Lie tori]}
\label{thm:realization}
Suppose  that  $k$ is an algebraically closed field of characteristic 0, and $n\ge 1$.
Then a centreless Lie $\Lm$-torus $\cL$ of nullity $n$ is bi-isomorphic to
a multiloop Lie $\Zn$-torus (as defined in Definition~\ref{def:multLT}) if and only if
$\cL$ is fgc.
\end{theorem}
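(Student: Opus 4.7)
The forward implication is quick: $\LT(\fs,\bsg,\fh)$ is fgc by Remark \ref{rem:LTconstruction}(iii), and the fgc property is preserved under any algebra isomorphism (hence under bi-isomorphism) since the centroid is canonically attached to the algebra structure. The substance lies in the converse, where the plan is to build $(\fs,\bsg,\fh)$ directly and then invoke Proposition \ref{prop:N}.

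So suppose $\cL$ is fgc. By Proposition \ref{prop:Lieprop}, $\cL$ is $\Lm$-graded-central-simple; by Proposition \ref{prop:fgc}, $\Lm/\Gm(\cL)$ is finite, so $\Gm(\cL)$ has finite index in $\Lm$. After fixing a group isomorphism $\Lm \cong \bbZ^n$, Theorem \ref{thm:realmult} produces a finite dimensional simple algebra $\fs$, a sequence $\bsg \in \cfo_n(\fs)$, and $\boldm \in \bbZ_+^n$ with $\bsg^\boldm = \boldone$, together with a $\bbZ^n$-isograded algebra isomorphism $\ph \colon \cL \to \loopm(\fs,\bsg)$; as $\cL$ is Lie, so is $\fs$. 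Because $\cL$ is centreless, Proposition \ref{prop:Lietorusbasic}(vi) gives $\cL^0 = \fg_\cL$ for the root grading pair $(\fg_\cL,\fh_\cL)$, and $\ph$ carries this onto the degree-zero piece $\fs^\bsg \ot 1$. This already yields (A1): $\fs^\bsg$ is a finite dimensional split simple Lie algebra by Proposition \ref{prop:Lietorusbasic}(ii). I take $\fh \subseteq \fs^\bsg$ to be the Cartan subalgebra determined by $\ph(\fh_\cL) = \fh \ot 1$; and (A3), together with $m_i = \order{\sg_i}$, is immediate from (LT4) and Lemma \ref{lem:multprop}.

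The crux is (A2). The idea is to transport the $Q$-grading of $\cL$ across $\ph$ to $\loopm(\fs,\bsg)$: by Proposition \ref{prop:Lietorusbasic}(iv) the $Q$-grading on $\cL$ is the $\fh_\cL$-weight decomposition, so the transported grading on $\loopm(\fs,\bsg)$ is the $\fh \ot 1$-weight decomposition. Its restriction to each summand $\fs^\blm \ot z^\lm$ is precisely the $\fh$-weight decomposition of the $\fs^\bsg$-module $\fs^\blm$. Using Proposition \ref{prop:Lietorusbasic}(iii) together with Lemma \ref{lem:suppQ} and the last statement of Proposition \ref{prop:N} (or rather the observation underpinning it), the nonzero weights that occur lie in $\De(\fs^\bsg,\fh)_{\mathrm{en}}$, while axiom (LT2)(ii) forces multiplicity at most one at each nonzero $\fh$-weight of $\fs^\blm$. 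Combined with Lemma \ref{lem:3mod}---in particular its final clause that any two modules satisfying condition (M) share a nonzero weight---this forces the non-trivial part of $\fs^\blm$ to reduce to a single irreducible summand $V^\blm$ satisfying (M), with the remainder forming a trivial submodule $U^\blm$. That is exactly (A2).

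With (A1)--(A3) and $\fh$ a Cartan subalgebra of $\fs^\bsg$ in hand, Proposition \ref{prop:N} certifies that $\LT(\fs,\bsg,\fh)$ is a Lie $\bbZ^n$-torus, and then Proposition \ref{prop:torusequiv} upgrades $\ph$ to a bi-isomorphism $\cL \to \LT(\fs,\bsg,\fh)$. The main obstacle I anticipate is the bookkeeping at the (A2) step: carefully tracking the identifications $\fh_\cL \leftrightarrow \fh$, $Q \subseteq \fh_\cL^* \leftrightarrow \fh^*$, and the root systems $\supp_Q(\cL) \leftrightarrow \De(\fs,\fh)$, so that Lemma \ref{lem:3mod} can be applied to the correct $\fs^\bsg$-modules with the correct weight constraints.
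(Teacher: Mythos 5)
Your proof is correct and follows essentially the same route as the paper's: reduce to a multiloop algebra via Theorem \ref{thm:realmult}, transport the root grading, and conclude via Proposition \ref{prop:N}. The only difference is cosmetic: you re-derive (A1)--(A3) by hand (tracing the same weight-multiplicity argument that appears inside the proof of Proposition \ref{prop:N} (a)$\Rightarrow$(b)) and then invoke (b)$\Rightarrow$(a) plus Proposition \ref{prop:torusequiv}, whereas the paper, after identifying $\cL$ with $\LT_\boldm(\fs,\bsg,\fh)$ as a $Q\times\bbZ^n$-graded algebra, simply applies Proposition \ref{prop:N} (a)$\Rightarrow$(b) once and is done.
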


\begin{proof}  The necessity of  the fgc condition is noted in Remark \ref{rem:LTconstruction}
and so we need only prove the sufficiency.  Let $\Lm$ be a free abelian group of rank $n$ and let
$\cL$ be a fgc centreless Lie $\Lm$-torus of type $\De$.
We let $Q = \spann_\bbZ(\De)$, so $\cL$
is a $Q\times\Lm$-graded algebra.
Replacing $\De$ by $\Dei$
if necessary we can assume (see Remark \ref{rem:LT5}) that
$\cL$ satisfies (LT5).  That is
\begin{equation}
\label{eq:suppQL}
\De = \supp_Q(\cL).
\end{equation}

Now, by Proposition \ref{prop:fgc}, $\Lm/\Gm_\Lm(\cL)$ is finite.
Also by Proposition \ref{prop:Lieprop}(i), $\cL$ is a graded-central-simple
$\Lm$-graded algebra.
Therefore by Theorem \ref{thm:realmult},
the $\Lm$-graded
algebra $\cL$ is isograded-isomorphic to the $\Zn$-graded algebra
$\loopm(\fs,\bsg)$ for some finite dimensional simple Lie algebra $\fs$,
some  $\bsg \in \cfo_n(\fs)$   and some
$\boldm\in \bbZ_+^n$ with $\bsg^\boldm = \boldone$.

So we have an algebra isomorphism $\ph: \cL \to \loopm(\fs,\bsg)$ and a group isomorphism
$\nu : \Lm \to \Zn$ such that $\ph(\cL^\lm) = \loopm(\fs,\bsg)^{\nu(\lm)}$ for $\lm\in \Lm$.
We use $\ph$ to transfer the $Q$-graded structure of $\cL$ to $\loopm(\fs,\bsg)$.
Then $\loopm(\fs,\bsg)$ becomes a Lie $\Lm$-torus of type $\De$.
We now use $\nu$ to identify $\Lm$ with $\Zn$ and we use $\ph$ to identify
\[\cL = \loopm(\fs,\bsg)\]
as a $Q\times \Lm$-graded algebra.

Next let $(\fg,\fh)$ be the root grading pair for $\cL$.
Then, by Proposition \ref{prop:Lietorusbasic}(vi),
$\fg = \cL^0 = \fs^\modz \ot 1$, so we can identify
$\fg$ with the subalgebra $\fs_\modz$ of $\fs$
(by identifying $x\ot 1$  with $x$).

Now we use the linear isomorphism $\widetilde{\phantom w}$ in Proposition \ref{prop:Lietorusbasic}(iii)
to identify $\De$ with a finite irreducible root system in $\fh^*$ such that
$\Dei = \De(\fg,\fh)$ and
\begin{equation*}
\cL_\al = \set{x\in \cL
\suchthat [h,x] = \al(h)x \text{ for } h\in \fh}
\end{equation*}
for $\al\in Q$.  Thus, by \eqref{eq:suppQL}, $\De = \De(\cL,\fh) = \De(\fs,\fh)$.
Our result now follows from Proposition \ref{prop:N}
((a)$\Rightarrow$(b)).
\end{proof}

\subsection{The case $n=1$}
\label{subsec:n=1}

In this subsection,  we show that when $n= 1$ conditions (A1) and (A2) are equivalent
to the condition that $\sg$ is a diagram automorphism.  ((A3) is automatic when $n=1$.)
So, when $n=1$, Theorem \ref{thm:realization} coincides with the classical realization theorem
for  affine Kac-Moody Lie algebras (see Remark \ref{rem:Kac} below).

We first recall the notion of a diagram automorphism
of a finite dimensional simple Lie algebra $\fs$.
If $\ft$ is a Cartan subalgebra  of $\fs$, $\set{\al_i}_{i=1}^r$ is a base for $\De(\fs,\ft)$,
and $0\ne e_i\in \fs_{\al_i}$ for $1\le i \le r$, then the pair $(\set{\al_i}$, $\set{e_i})$
is called an \emph{\'epinglage}
of $(\fs,\ft)$ \cite[chap.~VIII, \S~4, n$^\circ$~1]{B2}.  A \emph{diagram automorphism} of $\fs$ is an automorphism $\sg$ of
$\fs$ that stabilizes $\ft$ and $\set{e_i}$ for some choice
of Cartan subalgebra $\ft$ and \'epinglage $(\set{\al_i}, \set{e_i})$, in which case
$\sg$ is called a diagram automorphism of $\fs$ relative to $\ft$
and $(\set{\al_i}, \set{e_i})$.
Moreover, any diagram automorphism of $\fs$ is conjugate in $\Aut_k(\fs)$ to one relative to a fixed
Cartan subalgebra and \'epinglage  \cite[chap.~VIII, \S~5, prop.~5]{B2}.

\begin{proposition}
\label{prop:n=1}   A finite order  automorphism $\sg$ of a finite dimensional
simple Lie algebra $\fs$ satisfies (A1) and (A2) if and only if
$\sg$ is  a diagram automorphism.
\end{proposition}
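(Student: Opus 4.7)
The plan is to treat the two directions separately: the forward direction will use Kac's classical structure theorem for diagram automorphisms, while the converse will combine Proposition~\ref{prop:N}, the classical loop realization of affine Kac--Moody Lie algebras, and Theorem~\ref{thm:isomult}.

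For the direction ``$\sg$ is a diagram automorphism $\Rightarrow$ (A1) and (A2)'', the case $\sg=\id$ is trivial (then $\supp_{\bLm}(\fs) = \set{\modz}$, so (A1) is immediate and (A2) is vacuous), so I would assume $\sg$ is nontrivial, of order $m\in\set{2,3}$.  I would then invoke the classical structure theorem \cite[Chap.~8]{K}: the fixed subalgebra $\fg = \fs^\sg$ is simple [giving (A1)], and each eigenspace $\fs^\modi$ with $\modi\ne \modz$ is an irreducible $\fg$-module.  A case-by-case inspection of the four possibilities --- $\fs$ of type $\mathrm{A}_\ell$, $\mathrm{D}_\ell$, or $\mathrm{E}_6$ with $m = 2$, and $\fs$ of type $\mathrm{D}_4$ with $m = 3$ --- then shows that the nontrivial eigenspaces are in every case isomorphic to the adjoint, little adjoint, or symmetric module of $\fg$; these are exactly the modules satisfying condition (M) by Lemma~\ref{lem:3mod}. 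This yields (A2) with $U^\modi = 0$ and $V^\modi = \fs^\modi$.

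For the converse, suppose $\sg$ satisfies (A1) and (A2); note that (A3) is automatic in the case $n=1$.  Choose a Cartan subalgebra $\fh$ of $\fg = \fs^\sg$ and set $m = \order{\sg}$.  By Proposition~\ref{prop:N}, $\cL := \LT(\fs,\sg,\fh) = \Mlp_{(m)}(\fs,\sg)$ is a centreless Lie $\bbZ$-torus.  By the classical realization theorem for affine Kac--Moody Lie algebras \cite[Chap.~7 and 8]{K}, every centreless Lie $\bbZ$-torus is isograded-isomorphic as a $\bbZ$-graded Lie algebra to a loop algebra $\Mlp_{(m')}(\fs',\tau)$, where $\fs'$ is a finite dimensional simple Lie algebra, $\tau$ is a diagram automorphism of $\fs'$, and $m' = \order{\tau}$.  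Since $\fs$ and $\fs'$ are central-simple over the algebraically closed field $k$, Theorem~\ref{thm:isomult} applies and yields $P \in \GL_1(\bbZ) = \set{\pm 1}$ together with an algebra isomorphism $\ph : \fs \to \fs'$ satisfying $\tau = \ph \sg^P \ph^{-1}$.  Thus $\sg^P$ is conjugate in $\Aut_k$-sense to a diagram automorphism, so $\sg^P$ is itself a diagram automorphism of $\fs$.  Finally, since $\sg$ stabilizes a Cartan subalgebra and \'epinglage if and only if $\sg^{-1}$ does (they induce inverse permutations of the \'epinglage), $\sg$ itself is a diagram automorphism.

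The main obstacle lies in the forward direction: although the irreducibility of the eigenspaces is classical, the identification of each nontrivial $\fs^\modi$ as the adjoint, little adjoint, or symmetric module requires a concrete check in each of the four cases.  This is however standard from the tables of twisted affine Lie algebras and of finite order automorphisms in \cite[Chap.~8]{K}, so no genuinely new difficulty arises.
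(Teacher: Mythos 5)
Your forward direction is essentially the paper's: the paper simply cites Kac's Propositions~7.9 and 7.10, which amount to the case-by-case analysis you outline, so no new content there.

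Your converse direction is genuinely different, and the comparison is worth recording. The paper works \emph{locally}: it invokes the Kac--Wang factorization $\sg = \tau\hsg$ of a finite-order automorphism into a commuting toral part $\tau$ and diagram automorphism $\hsg$, reduces to $k=\bbC$, writes $\tau = \exp(2\pi i h)$, and then does a barycentric-coordinate computation on the affine Dynkin diagram to show that $\sg$ is conjugate to $\hsg$ precisely when $\fs^\sg$ is simple and the weight modules are controlled. Your proposal is \emph{global}: pass to the Lie torus $\cL = \LT(\fs,\sg,\fh)$, realize $\cL$ as a loop algebra of a diagram automorphism by the classical theory, and then use Theorem~\ref{thm:isomult} to force $\sg^{\pm1}$ to be conjugate to that diagram automorphism. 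The payoff of your route is that it avoids the delicate case analysis with standard labels $n_i'$ and conditions \eqref{eq:nulla}--\eqref{eq:nullb}; the cost is that it leans on heavier external inputs.

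There is, however, a gap in precision in the step you flag as coming ``by the classical realization theorem.'' The statement you need is that every centreless Lie $\bbZ$-torus is \emph{isograded-isomorphic} (as a $\bbZ$-graded algebra, in the sense required by Theorem~\ref{thm:isomult}) to a loop algebra of a diagram automorphism. Kac's Chapters 7--8 alone do not give this: they show that loop algebras of diagram automorphisms realize (centreless cores of) affine Kac--Moody algebras, but they say nothing about arbitrary centreless Lie $\bbZ$-tori, a concept that postdates \cite{K}. To bridge this you also need (i) the characterization of centreless Lie tori as centreless cores of EALAs \cite{Y2,N1}, (ii) the identification of nullity-1 EALAs with affine Kac--Moody algebras \cite{ABGP}, and (iii) the verification that the resulting composite isomorphism carries the $\bbZ$-grading of $\cL$ (by powers of the loop variable) to the $\bbZ$-grading of $\Mlp_{(m')}(\fs',\tau)$, up to the sign allowed by $\GL_1(\bbZ)$. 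Item (iii) is plausible --- the $\bbZ$-grading on both sides is the grading by eigenvalues of the degree derivation $d$, which is preserved --- but it is exactly the kind of thing one must actually check before invoking Theorem~\ref{thm:isomult}, since an ungraded Lie algebra isomorphism is not enough. With these citations and the gradedness check supplied, your argument goes through; without them, the step ``by the classical realization theorem'' is claiming more than the cited source gives.
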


\begin{proof}  First we fix some notation. Let
$\set{e_\al,h_i} = \set{e_\al: \al\in \De(\fs,\ft)^\times}\cup \set{h_i : 1\le i \le r}$
be a Chevalley basis for $\fs$. Set $\ft = \sum k h_i$, a Cartan subalgebra of $\fs$. Let
$\set{\al_i}_{i=1}^r$ be a base for $\De(\fs,\ft)$, in which case
$(\set{\al_i}$, $\set{e_{\al_i}})$ is an \'epinglage of $(\fs,\ft)$.

Now, by \cite[Proposition~4.29]{KW},
replacing $\sg$ by a conjugate in $\Aut_k(\fs)$, we may assume that
\[\sg =  \tau \hsg\]
where  $\tau,\hsg\in \Aut_k(\fs)$, $\tau \hsg = \hsg \tau$, $\hsg$ is a diagram automorphism of $\fs$ relative to
$\ft$ and $(\set{\al_i}$, $\set{e_{\al_i}})$, $\tau$ fixes
$\ft$ pointwise, and $\tau$ is diagonal relative to the Chevalley basis
$\set{e_\al,h_i}$ of $\fs$.
We will show that under these circumstances
\begin{equation}
\label{eq:n=1claim}
\text{$\sg$ is conjugate to $\hsg$ in $\Aut_k(\fs)$} \iff \text{$\sg$ satisfies (A1) and (A2)}.
\end{equation}
This suffices to prove the proposition.

We begin by reducing the proof of \eqref{eq:n=1claim} to the case when $k = \bbC$.
(We do this since we wish to use Kac's classification of finite order automorphisms
of $\fs$, which has been carried out only in that case (see \cite[Chap.~8]{K} or
\cite[\S 3.6--3.11]{GOV}).)   For this, let $\bar \bbQ$ be an  algebraic
closure of the rational field, which we regard as a subfield of both $k$ and $\bbC$.
Let $\fs(\bbbQ)$ be the $\bbbQ$-span of the Chevalley basis $\set{e_\al,h_i}$ in $\fs$,
and let $\fs(k) = \fs(\bbbQ)\ot_\bbbQ k$,
$\fs(\bbC) = \fs(\bbbQ)\ot_\bbbQ \bbC$.
We identify $\fs(k)$ with $\fs$,
regard $\fs(\bar \bbQ)$ as a $\bar \bbQ$-subalgebra of both $\fs$ and
$\fs(\bbC)$, and regard
$\Aut_{\bar\bbQ}(\fs(\bar \bbQ))$ as a subgroup of both
$\Aut_{k}(\fs)$ and $\Aut_{\bbC}(\fs(\bbC))$.  Now $\tau$ has finite order
and acts diagonally with respect to the Chevalley basis  $\set{e_\al,h_i}$, so
$\tau$ is in $\Aut_{\bar\bbQ}(\fs(\bar \bbQ))$.  On the other hand,
$\hsg$ is certainly in $\Aut_{\bar\bbQ}(\fs(\bar \bbQ))$, and therefore so is $\sg$.
But $\sg$ and $\hsg$ are conjugate in $\Aut_k(\fs)$ if and only if they are conjugate
$\Aut_{\bar\bbQ}(\fs(\bar \bbQ))$ (since conjugacy is checked by solving a system of polynomial equations
over $\bar \bbQ$),
which in turn holds if and only if they are conjugate in $\Aut_{\bbC}(\fs(\bbC))$.
Also, it is clear that $\sg$ satisfies (A1) and (A2) in $\Aut_k(\fs)$ if and only if
$\sg$ satisfies (A1) and (A2) in  $\Aut_{\bar\bbQ}(\fs(\bar \bbQ))$,  which in turn holds
if and only if $\sg$ satisfies (A1) and (A2) in $\Aut_{\bbC}(\fs(\bbC))$. So we may assume that $k=\bbC$.

It is well known
that any diagram  automorphism
satisfies (A1) and (A2) \cite[Prop.~7.9 and 7.10]{K}.
In fact,  for $0\ne \blm\in\bLm$, the
$\fg$-module $\fs^\blm$ itself satisfies condition (M). So we have
``$\Rightarrow$'' in \eqref{eq:n=1claim}.

For the converse suppose that $\sg$ satisfies (A1) and (A2).
Now,  $\tau$ fixes $\ft$ pointwise and hence $\tau$ stabilizes the root
spaces in $\fs$ relative to $\ft$. It follows that $\tau$ lies in the connected component
of the identity in the algebraic group $\Aut_k(\fs)$.  Thus,
replacing $\sg$ by a conjugate in $\Aut_k(\fs)$, we may assume that
\[\tau = \exp(2\pi ih),\]
where $h\in \mathfrak{h}^{\hat{\sigma}}$  has barycentric coordinates
satisfying certain positivity conditions \cite[Thm.~3.11 and 3.16]{GOV}.
(We will not need the definition of these coordinates nor the precise
conditions, but only the consequences that we mention below.)
Let
$\fg = \fs^\sg$ and $\hfg = \fs^\hsg$.
By (A1), $\fg$ and $\hfg$ are simple.
Also, since $\tau$ fixes $\ft$ pointwise, we see that $\ft^\sg= \ft^\hsg$, and we set
\[\fh := \ft^\sg = \ft^\hsg.\]
Then, since $\fh = \ft^\hsg$, the centralizer  $C_\fs(\fh)$ of $\fh$ in $\fs$
is a Cartan subalgebra
of $\fs$  \cite[Theorem 9]{P}.
But $\ft \subseteq C_\fs(\fh)$, so
\[\ft = C_\fs(\fh).\]
Thus, any Cartan subalgebra of $\fg$ containing $\fh$ is contained in $\ft$
and hence lies in  $\fg\cap\ft = \fh$.  So $\fh$ is a Cartan subalgebra of $\fg$, and similarly
$\fh$ is a Cartan subalgebra of $\hfg$.

We now let $\De_\fg = \De(\fg,\fh)$, $\De_\hfg = \De(\hfg,\fh)$, and
$\De = \De(\fs,\fh)$.
Then, by  Proposition~\ref{prop:N}, $\De$ is an irreducible root system in
$\fh^*$, $\De = \De_\fg$ or  $\De = (\De_\fg)_\mathrm{en}$, and
 $\De = \De_\hfg$ or  $\De = (\De_\hfg)_\mathrm{en}$.
Hence $\De_\fg = \De_\hfg$,
and consequently $\fg \simeq \hfg$.

Let
$m =\order{\sg}$ and $\hm = \order{\hsg}$.
Then $\hsg|_\fh = \sg|_\fh$ and $\hm$
is the order of $\hsg|_\fh$.  Thus,
$\hm$ divides $m$.
In fact, we claim that
\[\hm = m.\]
To see this it is  enough to show
that $\tau^\hm = 1$, or equivalently that $\tau^\hm|_{\fs_\al} = 1$ for $\al\in \De$.
Hence,  since  $\De \subseteq (\De_\fg)_\mathrm{en}$, it suffices to
to show that $\tau^\hm|_{\fs_\al} = 1$ for $\al\in (\De_\fg)_\mathrm{en}$.
So, since $\tau = \exp(2\pi ih)$,
it is enough to show that $\tau^\hm|_{\fg_\al} = 1$ for $\al\in \De_\fg$.
But if $\al\in \De_\fg$ and $x\in \fg_\al$,
$\tau^\hm (x) = \sg^{\hm}(x) = x$, proving the claim.

If $\hm = m =1$, we have $\sg = 1$.  So we can assume that
$\hm = m = 2$ or $3$.

Now let $X_r$ be the type of the simple Lie algebra $\fs$,
let $\tilde \Pi^{(\hsg)} = \set{\beta_0,\beta_1,\dots,\beta_N}$ be
the set of nodes of an affine Dynkin diagram of type $X_n^{(\hm)}$,
and let $n_0',\dots,n_N'$ be the standard labels on the diagram
$X_n^{(\hm)}$ as in  \cite[Table 3, p.~228]{GOV}.  (These are the same
as the labels in \cite[Tables Aff 2 and 3]{K} except for
type $A_{2\ell}^{(2)}$ where the labels are reversed.)
Then,  we can identify the sub-diagram
$\set{\beta_1,\dots,\beta_N}$ with the Dynkin diagram of $\De_\fg$
(relative to some base) \cite[\S 3.9]{GOV}.
With this notation, the barycentric coordinates $x_0,\dots,x_N$ of $h$ satisfy
$x_i = \frac {p_i}{m}$, where $p_0,\dots,p_N$ are nonnegative integers satisfying
$\sum_{i=0}^N n'_i p_i = \frac m \hm$ \cite[\S 3.11]{GOV}.

Since $m = \hm$, it follows that exactly one $p_i$, say
$p_a$, is nonzero.  Further, $p_a = 1$ and
\begin{equation}
\label{eq:nulla}
n'_a = 1.
\end{equation}
Also, since $\fs^\sg \simeq \fs^\hsg$,
\begin{equation}
\label{eq:nullb}
\text{the diagrams $\tilde \Pi^{(\hsg)} \setminus \set{\beta_a}$ and $\tilde \Pi^{(\hsg)} \setminus \set{\beta_0}$
are isomorphic}
\end{equation}
[ibid].  An examination of the Dynkin diagrams in
\cite[Table 3, p.~228]{GOV}  demonstrates that conditions \eqref{eq:nulla} and
\eqref{eq:nullb} imply that $\beta_a$ is conjugate
to $\beta_0$ under $\Aut(\tilde \Pi^{(\hsg)})$.  This together with $p_a = 1$ implies
that $\sg$ is conjugate to  $\hsg$ \cite[\S 3.11]{GOV}.
\end{proof}

\begin{remark}
\label{rem:Kac}
If  $\cE$ is an affine Kac-Moody Lie algebra, it is known that
the quotient $\cE^{(1)}/Z(\cE^{(1)})$ has the structure of a centreless Lie $\bbZ$-torus,
where $\cE^{(1)}$ is the derived algebra of $\cE$.
Moreover, this Lie torus is always fgc.
Hence, by Theorem \ref{thm:realization} and
Proposition \ref{prop:n=1},
$\cE^{(1)}/Z(\cE^{(1)})$ is bi-isomorphic to
a (multi)loop Lie torus $\LT(\fs,\sg,\fh)$, where
$\fs$ is a finite dimensional simple Lie algebra,
$\sg$ is a diagram automorphism of $\fs$
and $\fh$ is a Cartan subalgebra of $\fs^\sg$.
This is the classical realization theorem  \cite[Theorems 7.4 and 8.3]{K} for
affine Kac-Moody Lie algebras.
However, we have not given a new proof of the classical theorem since the fact
that $\cE^{(1)}/Z(\cE^{(1)})$ is fgc does not seem clear without using  that same theorem.
\end{remark}

\section{Bi-isomorphism and isotopy of multiloop Lie tori}
\label{sec:loopbiandiso}

In this  section, we give  necessary and  sufficient conditions for two multiloop Lie tori
$\cL$ and $\cL'$ be bi-isomorphic.  We then use that result to give
necessary and  sufficient conditions for $\cL$ and $\cL'$ to be isotopic.

\subsection{Bi-isomorphism of multiloop Lie tori}
\label{subsec:loopbi}

\begin{theorem}
\label{thm:loopbi}
Suppose that
$k$ is an algebraically closed field of characteristic 0 and $n$ is
a positive integer.  Let $\cL = \LT(\fs,\bsg,\fh)$ and $\cL'=\LT(\fs',\bsg',\fh')$
be multiloop Lie $\Zn$-tori.
Then  $\cL$ and $\cL'$ are bi-isomorphic if and only if there exists a matrix
$P=(p_{ij}) \in\GL_n(\bbZ)$ and an algebra isomorphism $\ph : \fs\to \fs'$ such that
\[\bsg' = \ph \bsg^P \ph^{-1}.\]
Moreover,  in that case, $\ph$ can be chosen with the additional property
that $\ph(\fh) = \fh'$.
\end{theorem}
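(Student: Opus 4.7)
The plan is to derive both directions from Theorem \ref{thm:isomult}, whose hypotheses are satisfied for the data $(\fs,\bsg)$ and $(\fs',\bsg')$: the algebras $\fs$ and $\fs'$ are finite dimensional simple over the algebraically closed field $k$, hence central-simple; and with $\boldm=(\order{\sg_1},\ldots,\order{\sg_n})$, axiom (A3) together with Lemma \ref{lem:multprop} supplies the order condition $\order{\langle\sg_1,\ldots,\sg_n\rangle}=m_1\cdots m_n$, and likewise on the primed side. For the forward direction, any bi-isomorphism $\cL\to\cL'$ is in particular an algebra isomorphism that is isograded relative to the external $\Zn$-gradings; since $\cL=\loopm(\fs,\bsg)$ and $\cL'=\loopmp(\fs',\bsg')$ as $\Zn$-graded algebras, this is exactly an isograded-isomorphism of multiloop algebras, and Theorem \ref{thm:isomult} produces the desired $P\in\GL_n(\bbZ)$ and $\ph:\fs\to\fs'$ with $\bsg'=\ph\bsg^P\ph^{-1}$.

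For the reverse direction, given such $P$ and $\ph$, Theorem \ref{thm:isomult} supplies an isograded-isomorphism $F:\cL\to\cL'$ of $\Zn$-graded algebras; that is, $F$ is a Lie algebra isomorphism together with a group isomorphism $F_e:\Zn\to\Zn$ satisfying $F(\cL^\lm)={\cL'}^{F_e(\lm)}$. Since $\cL$ and $\cL'$ are Lie tori, Proposition \ref{prop:torusequiv} promotes $F$ to a bi-isomorphism automatically, completing the equivalence.

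The refinement $\ph(\fh)=\fh'$ is the subtlest point. Because $P\in\GL_n(\bbZ)$, equation \eqref{eq:bsggen} gives $\langle\bsg^P\rangle=\langle\bsg\rangle$, hence $\fs^{\bsg^P}=\fs^\bsg=\fg$; so $\bsg'=\ph\bsg^P\ph^{-1}$ forces $\ph(\fg)=\fg'$, and $\ph(\fh)$ is a Cartan subalgebra of the simple (by (A1)) Lie algebra $\fg'$. Mimicking the trick used in the proof of Proposition \ref{prop:torusequiv}, I would apply \cite[Corollary on p.~28]{Sel} to choose elements $x_1,\ldots,x_r$ in the sum of positive and negative root spaces of $\fg'$ relative to $\fh'$ such that $\chi:=\exp(\ad_{\fg'}(x_1))\cdots\exp(\ad_{\fg'}(x_r))$ is an automorphism of $\fg'$ sending $\ph(\fh)$ to $\fh'$. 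Each $x_i$ is nilpotent in $\fg'$, and in the finite dimensional adjoint representation of the simple algebra $\fg'$ on $\fs'$ a nilpotent element acts by a nilpotent operator, so $\ad_{\fs'}(x_i)$ is nilpotent and $\tilde\chi:=\exp(\ad_{\fs'}(x_1))\cdots\exp(\ad_{\fs'}(x_r))$ is a well-defined automorphism of $\fs'$ extending $\chi$. Crucially, each $x_i$ lies in $\fg'=\fs'^{\bsg'}$, so $\sg'_j(x_i)=x_i$ for all $i,j$, and consequently each $\sg'_j$ commutes with $\tilde\chi$; that is, $\tilde\chi\,\bsg'\,\tilde\chi^{-1}=\bsg'$. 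Replacing $\ph$ by $\tilde\chi\circ\ph$ now yields an isomorphism $\fs\to\fs'$ with $\ph(\fh)=\fh'$ that still satisfies $\bsg'=\ph\bsg^P\ph^{-1}$. The main obstacle throughout is this last step: the extension of a Cartan-conjugating automorphism of $\fg'$ to an automorphism of $\fs'$ that commutes with $\bsg'$, which is handled cleanly by choosing the conjugating elements $x_i$ inside $\fs'^{\bsg'}$.
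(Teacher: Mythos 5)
Your proof is correct and follows essentially the same route as the paper's: both directions of the equivalence come from Theorem \ref{thm:isomult} together with Proposition \ref{prop:torusequiv}, and the refinement $\ph(\fh)=\fh'$ is obtained by conjugating Cartan subalgebras of $\fg'$ by a product of exponentials $\exp(\ad(x_i))$ with $x_i$ nilpotent in $\fg'$, then extending to $\fs'$ and noting that since $x_i\in\fs'^{\bsg'}$ the extension commutes with each $\sg'_j$. You spell out a couple of points the paper leaves implicit, namely the verification of the hypotheses of Theorem \ref{thm:isomult} and the deduction $\ph(\fg)=\fg'$ from \eqref{eq:bsggen}, but the underlying argument is the same.
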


\begin{proof}
The first statement follows from Proposition \ref{prop:torusequiv} and
Theorem \ref{thm:isomult}.  For the second statement, suppose that we have $P$ and $\ph$
as in the first statement.  Let $\fg = \fs^\bsg$
and $\fg' = {\fs'}^{\bsg'}$.  Then
$\ph(\fg) = \fg'$.  Thus, $\ph(\fh)$ and $\fh'$ are Cartan subalgebras
of $\fg'$.  But then there exists an automorphism $\psi_{\fg'}\in\Aut_k(\fg')$
of the form $\psi_{\fg'} = \exp(\ad_{\fg'}(x_1)) \dots \exp(\ad_{\fg'}(x_\ell))$,
with $x_1,\dots,x_\ell$ ad-nilpotent elements of $\fg'$,
such that $\psi_{\fg'}(\phi(\fh)) = \fh'$ (see the proof of Proposition
\ref{prop:torusequiv}).
Now $\psi_{\fg'}$ extends to an automorphism of $\fs'$ that
commutes with the automorphisms $\sg'_1,\dots,\sg_n'$.  Replacing
$\ph$ by $\psi\circ\ph$ we may assume  that~$\ph(\fh) = \fh'$.
\end{proof}

\begin{remark}
\label{rem:caution}   Suppose that $\LT(\fs,\bsg,\fh)$ is a multiloop Lie $\Zn$-torus
(so $\bsg\in \cfo_n(\fs)$ satisfies (A1), (A2) and (A3)), and let $P\in \GL_n(\bbZ)$.  Then, it is easy to see
that $\bsg^P$ also satisfies (A1) and (A2).  However, the reader is cautioned that
it is not true in general that $\bsg^P$ satisfies (A3).  (An example is easily found
when $n=2$, $\order{\sg_1} = 2$ and  $\order{\sg_2} = 1$.)  In fact,
any orbit of $\GL_n(\bbZ)$ acting on $\cfo_n(\fs)$ contains
an element $\bsg$ that satisfies (A3) (see Proposition ~\ref{prop:orbitrep} below).
This latter fact will be important in our discussion of classification in section \ref{sec:approach}
 (see Theorem \ref{thm:class}(iii) below).
\end{remark}

\begin{remark}
Suppose  that $n=1$.
Since any diagram automorphism of $\fs$ is conjugate to its inverse
in $\Aut_k(\fs)$, it follows from Theorem \ref{thm:loopbi} and Proposition
\ref{prop:n=1} that two (multi)loop Lie $\bbZ$-tori $\LT(\fs,\sg,\fh)$ and $\LT(\fs',\sg',\fh')$
are bi-isomorphic if and only if there exists
an algebra isomorphism $\ph: \fs \to \fs'$ such that
$\sg' = \ph \sg \ph^{-1}$.
\end{remark}

\begin{corollary}
\label{cor:loopbi1}
Suppose that $\cL$ and $\cL'$ are  as in Theorem \ref{thm:loopbi}.
If $\cL$ and $\cL'$ are bi-isomorphic, then there exists an
algebra isomorphism $\ph : \fs\to \fs'$ such that
$\langle \bsg'\rangle = \ph \langle \bsg \rangle \ph^{-1}$.
\end{corollary}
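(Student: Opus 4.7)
The plan is to derive the statement directly from Theorem \ref{thm:loopbi} combined with the group-theoretic fact already recorded in \eqref{eq:bsggen}, namely that $\langle \bsg^P \rangle = \langle \bsg \rangle$ for every $P \in \GL_n(\bbZ)$.

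First, I would apply Theorem \ref{thm:loopbi} to the bi-isomorphism between $\cL$ and $\cL'$ to produce a matrix $P \in \GL_n(\bbZ)$ and an algebra isomorphism $\ph : \fs \to \fs'$ satisfying $\bsg' = \ph\,\bsg^P\,\ph^{-1}$. Writing $\bsg^P = (\tau_1,\dots,\tau_n)$ with $\tau_i = \prod_{j=1}^n \sg_j^{p_{ji}}$, each entry of $\bsg'$ has the form $\ph\,\tau_i\,\ph^{-1}$, and conjugation by $\ph$ is a group isomorphism $\Aut_k(\fs) \to \Aut_k(\fs')$. Consequently
\[
\langle \bsg' \rangle \;=\; \langle \ph\,\tau_1\,\ph^{-1},\dots,\ph\,\tau_n\,\ph^{-1}\rangle
\;=\; \ph\,\langle \tau_1,\dots,\tau_n\rangle\,\ph^{-1}
\;=\; \ph\,\langle \bsg^P \rangle\,\ph^{-1}.
\]

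The key step is then to invoke \eqref{eq:bsggen}, which says that for $P \in \GL_n(\bbZ)$ we have $\langle \bsg^P \rangle = \langle \bsg \rangle$. Substituting this equality into the display above yields
\[
\langle \bsg' \rangle \;=\; \ph\,\langle \bsg \rangle\,\ph^{-1},
\]
which is precisely the desired conclusion. There is no serious obstacle here: everything reduces to invertibility of $P$, which ensures that passing from $\bsg$ to $\bsg^P$ does not shrink (or enlarge) the generated subgroup of $\Aut_k(\fs)$.
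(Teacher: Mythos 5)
Your proof is correct and is exactly the argument the paper has in mind: the paper's one-line proof simply cites Theorem \ref{thm:loopbi} together with \eqref{eq:bsggen}, and you have unpacked those two steps. No issues.
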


\begin{proof}  This follows from Theorem \ref{thm:loopbi} and
\eqref{eq:bsggen}.
\end{proof}

\begin{corollary}
\label{cor:loopbi2}  Two
untwisted multiloop Lie  $\Zn$-tori
$\LT(\fs,\boldone,\fh)$ and $\LT(\fs',\boldone,\fh')$
are bi-isomorphic if and only if $\fs$ and $\fs'$ are  isomorphic,
\end{corollary}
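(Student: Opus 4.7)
The plan is to derive this corollary directly from Theorem \ref{thm:loopbi}, specializing $\bsg$ and $\bsg'$ both to $\boldone$. The key observation I will use is that $\boldone^P = \boldone$ for every $P \in \GL_n(\bbZ)$, since each coordinate of $\boldone^P$ is a product of identity automorphisms of $\fs$ and hence equals the identity.

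For the ``if'' direction, I will assume that $\fs$ and $\fs'$ are isomorphic via an algebra isomorphism $\ph : \fs \to \fs'$, and take $P$ to be the identity matrix $I_n \in \GL_n(\bbZ)$. Then
\[
\ph\, \boldone^{I_n}\, \ph^{-1} = \ph\, \boldone\, \ph^{-1} = \boldone = \bsg',
\]
since conjugating the tuple of identity automorphisms by $\ph$ gives back the same tuple. Thus the hypothesis of Theorem \ref{thm:loopbi} is verified, yielding the bi-isomorphism $\LT(\fs,\boldone,\fh)$ and $\LT(\fs',\boldone,\fh')$.

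For the ``only if'' direction, I will start from a bi-isomorphism of $\LT(\fs,\boldone,\fh)$ and $\LT(\fs',\boldone,\fh')$ and apply Theorem \ref{thm:loopbi} in the reverse direction: it furnishes (as part of its conclusion) an algebra isomorphism $\ph : \fs \to \fs'$, which is all that is needed to conclude $\fs \cong \fs'$.

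Since both directions reduce to straightforward applications of Theorem \ref{thm:loopbi} with $\bsg = \bsg' = \boldone$, no substantive obstacle arises; the corollary is a clean specialization of the bi-isomorphism theorem to the untwisted case.
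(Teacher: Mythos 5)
Your proof is correct and is the intended argument: the paper states the corollary without proof, taking it to be an immediate specialization of Theorem~\ref{thm:loopbi}, and your observations that $\boldone^P = \boldone$ for all $P\in\GL_n(\bbZ)$ and that conjugation by $\ph$ fixes $\boldone$ are exactly what make both directions collapse to the existence of an algebra isomorphism $\ph:\fs\to\fs'$.
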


\begin{corollary}
\label{cor:untwisted}
Suppose that $\cL = LT(\fs,\bsg,\fh)$ is a multiloop Lie $\Zn$-torus.
Then the following are equivalent:
\begin{itemize}
\item[(a)] $\bsg = \boldone$.
\item[(b)] $\cL$ equals the untwisted multiloop Lie $\Zn$-torus $\cL = LT(\fs,\boldone,\fh)$.
\item[(c)] $\cL$ is bi-isomorphic to an untwisted multiloop Lie  $\Zn$-torus $LT(\fs',\boldone,\fh')$.
\item[(d)] $\sg_1,\dots,\sg_n$ are contained in a common torus of the algebraic group $\Aut_k(\fs)$
(using the terminology of  \cite[\S~16.2]{H}  for example).
\item[(e)] $\fh$ is a Cartan subalgebra of $\fs$.
\end{itemize}
\end{corollary}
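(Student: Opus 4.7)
The plan is to prove all the equivalences by the cycle (a)$\Leftrightarrow$(b)$\Rightarrow$(c)$\Rightarrow$(a) together with the further links (a)$\Leftrightarrow$(d) and (a)$\Leftrightarrow$(e). The equivalence (a)$\Leftrightarrow$(b) is immediate from Construction \ref{con:LTconstruction}: when $\bsg = \boldone$ we have $m_i = 1$ for all $i$, $\bLm = 0$, and the construction collapses to $\fs \otimes k[z_1^{\pm 1},\ldots,z_n^{\pm 1}]$; conversely, equality of the two graded algebras forces $\fs^\blm = \fs$ for every $\blm$ in the image of $\bbZ^n \to \bLm$, which happens only when every $\sg_i$ acts trivially. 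The implication (b)$\Rightarrow$(c) is trivial. For (c)$\Rightarrow$(a), Corollary \ref{cor:loopbi1} applied to a bi-isomorphism $\cL \to \LT(\fs',\boldone,\fh')$ yields an isomorphism $\ph \colon \fs \to \fs'$ with $\ph\langle \bsg \rangle \ph^{-1} = \langle \boldone \rangle = \set{1}$, forcing $\bsg = \boldone$. Both (a)$\Rightarrow$(d) (the trivial subgroup lies in any torus) and (a)$\Rightarrow$(e) ($\fg = \fs$ when $\bsg = \boldone$, so $\fh$ is automatically a Cartan of $\fs$) are clear.

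The heart of the argument is (e)$\Rightarrow$(a). Assume $\fh$ is a Cartan subalgebra of $\fs$. Since $\fh$ is a Cartan subalgebra of $\fg = \fs^{\bsg}$ by construction, in particular $\fh \subseteq \fg$, so every $\sg_i$ fixes $\fh$ pointwise. Consequently each $\sg_i$ stabilizes every root space $\fs_\al$ for $\al \in \De(\fs,\fh)$, and since $\dim \fs_\al = 1$ for $\al \ne 0$, each $\sg_i$ acts on $\fs_\al$ by a scalar $c_{i,\al} \in k^\times$. The fixed-point space $\fg_\al = \fs_\al \cap \fg$ equals $\fs_\al$ if $c_{i,\al} = 1$ for every $i$ and is $0$ otherwise; hence $\al \in \De(\fg,\fh)^\times$ precisely when $c_{i,\al} = 1$ for all $i$. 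Now Proposition \ref{prop:N} gives $\De(\fs,\fh)_{\mathrm{ind}} = \De(\fg,\fh)$, and since $\fs$ is finite-dimensional split simple, $\De(\fs,\fh)$ is reduced, so $\De(\fs,\fh)_{\mathrm{ind}} = \De(\fs,\fh)$. Combining these yields $\De(\fg,\fh) = \De(\fs,\fh)$, forcing $c_{i,\al} = 1$ for all $i$ and all $\al \in \De(\fs,\fh)^\times$. Thus each $\sg_i$ acts as the identity on every root space and on $\fh$, so $\sg_i = \id_\fs$ and $\bsg = \boldone$.

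It remains to prove (d)$\Rightarrow$(a). A common torus of $\Aut_k(\fs)$ containing $\sg_1,\ldots,\sg_n$ lies inside a maximal torus $T$ of the identity component $\Aut_k(\fs)^\circ$, and standard structure theory provides a Cartan subalgebra $\ft$ of $\fs$ fixed pointwise by $T$. Then $\ft \subseteq \fg$; moreover $\ft$ consists of ad-semisimple elements of $\fs$, hence of $\fg$, and $C_\fg(\ft) = C_\fs(\ft) \cap \fg = \ft \cap \fg = \ft$ since $\ft$ is self-centralizing in $\fs$, so $\ft$ is a self-centralizing toral subalgebra of the simple Lie algebra $\fg$ and therefore a Cartan subalgebra of $\fg$. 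By Remark \ref{rem:Cartaninvariance}, $\LT(\fs,\bsg,\ft)$ is bi-isomorphic to $\cL$, and by construction it satisfies (e) with $\ft$ in place of $\fh$; applying the already-established (e)$\Rightarrow$(a) to $\LT(\fs,\bsg,\ft)$ yields $\bsg = \boldone$. I expect the main obstacle to be (e)$\Rightarrow$(a), where the passage from the Cartan coincidence to pointwise triviality of each $\sg_i$ rests on the identification $\De(\fs,\fh)_{\mathrm{ind}} = \De(\fg,\fh)$ supplied by Proposition \ref{prop:N} together with the reducedness of the root system $\De(\fs,\fh)$.
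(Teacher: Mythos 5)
Your proof is correct and follows essentially the same route as the paper: the crux is the same reducedness argument for (e)$\Rightarrow$(a) via $\Dei=\De_\fg$ from Proposition~\ref{prop:N}, and the same construction of a Cartan subalgebra $\ft\subseteq\fg$ from $\Lie(T)$ to handle (d). The only structural difference is cosmetic: the paper goes (d)$\Rightarrow$(e) directly (noting that $\rank\fg=\rank\fs$ forces $\fh$ to be a Cartan of $\fs$), whereas you detour (d)$\Rightarrow$(a) by invoking Remark~\ref{rem:Cartaninvariance} and then re-applying (e)$\Rightarrow$(a) to $\LT(\fs,\bsg,\ft)$; you also prove (b)$\Rightarrow$(a), which the paper's chain (a)$\Rightarrow$(b)$\Rightarrow$(c)$\Rightarrow$(a) renders unnecessary.
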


\begin{proof} We use the notation $\fg$, $\De_\fg$, $\De$ and $Q$
from Proposition \ref{prop:N}.  Now
``(a)$\Rightarrow$(b)'' and ``(b)$\Rightarrow$(c)'' are trivial.  Also,
``(c)$\Rightarrow$(a)'' follows from Theorem \ref{thm:loopbi}, hence
``(c)$\Rightarrow$(d)''.  To prove ``(d)$\Rightarrow$(e)'',
suppose that $\sg_1,\dots,\sg_n$ are contained in a torus $T$ of $\Aut_k(\fs)$.
Let $\Lie(T)$ be the Lie algebra of $T$ in $\Lie(\Aut_k(\fs)) = \ad_\fs(\fs)$.
Then  $\Lie(T) = \ad_\fs(\ft)$, where $\ft$ is a Cartan subalgebra of $\fs$.
But $\ft$ is fixed pointwise by $T$ and hence by each $\sg_i$.
Thus, $\ft \subseteq \fg = \fs^{\bar 0}$, so $\ft$ is a Cartan subalgebra of $\fg$.
Hence the simple Lie algebras $\fg$ and $\fs$ have the same rank, so we have (e).
Finally, to prove ``(e)$\Rightarrow$(a)'', suppose that $\fh$ is a Cartan subalgebra of $\fs$.
Then $\De$ is reduced and therefore, by \eqref{eq:propN3}, $\De = \De_\fg$.
Next, if $\al\in \Dec = \De_\fg^\times$, we have $0\ne \fg_\al \subseteq \fs_\al$, so,
$\fg_\al = \fs_\al$ since $\dim(\fs_\al)=1$.  Thus, $\fs \subseteq \fg$, and hence $\fs = \fg$,
which implies (a).
\end{proof}

\begin{remark}  Suppose that $\fs$ is a finite dimensional simple Lie algebra,
$\bsg = (\sg_1,\dots,\sg_n)\in \cfo_n(\fs)$   and
$\boldm  \in \bbZ^n_+$ with $\bsg^\boldm = \boldone$.
If the automorphisms  $\sg_1,\dots,\sg_n$ are contained
in a common torus of $\Aut_k(\fs)$, it is not difficult to show using techniques of Galois cohomology
that the multiloop algebra $M_\boldm(\fs,\bsg)$ is isomorphic as a Lie algebra to
the untwisted multiloop algebra $\fs\ot k[z_1^{\pm 1},\dots,z_n^{\pm 1}]$.  We have not used
that general fact in the graded result  Corollary \ref{cor:untwisted}.
\end{remark}

\subsection{Isotopy of multiloop Lie tori}
\label{subsec:loopisotopy}

Throughout this  subsection suppose that $\fs$ is a finite dimensional simple Lie algebra over
$k$, $\bsg = (\sg_1,\dots,\sg_n)$ is a sequence  in  $\cfo_n(\fs)$ satisfying (A1)--(A3), and
$\fh$ is a Cartan subalgebra of $\fg = \fs^\bsg$.
Let $\boldm = (m_1,\dots,m_n)$, where $m_i = \order{\sg_i}$,
$\De = \De(\fs,\fh)$ and $Q = \spann_\bbZ(\De)$; and let
$\fs = \oplus_{\al\in Q}\fs_\al$ be the root space
decomposition of $\fs$ relative to the adjoint action of $\fh$.
Finally,  let
\[\cL = \LT(\fs,\bsg,\fh)\]
be the multiloop Lie $\Zn$-torus determined by $\fs$, $\bsg$ and $\fh$.

We will be considering isotopes of $\cL$.  (See subsection
\ref{subsec:isotopes} for the terminology and notation used here.)
To do this we use
elements $\shom\in \Hom(Q,\Zn)$, which we write as
$\shom = (\shom_1,\dots,\shom_n)$, where $\shom_i\in \Hom(Q,\bbZ)$ for $1\le i \le n$.

\begin{proposition}
\label{prop:multisotope}
Suppose that $\shom= (\shom_1,\dots,\shom_n)\in \Hom(Q,\Zn)$
is admissible for~$\cL$.  For $1\le i \le n$,
define $\tau_i$ and $\tsg_i$ in $\Aut_k(\fs)$ by
\[\tau_i(x_\al) = \zeta_{m_i}^{-\shom_i(\al)} x_\al\]
for $\al\in Q$, $x_\al\in \fs_\al$,
and
\[\tsg_i = \tau_i\sg_i = \sg_i\tau_i\in \Aut_k(\fs)\]
for $1\le i \le n$.
Then $\tbsg = (\tsg_1,\dots,\tsg_n)\in \cfo_n(\fs)$, $\tbsg$  satisfies
(A1)--(A3), $\fh$ is a Cartan subalgebra
of $\fs^\tbsg$, $\order{\tsg_i} = m_i$ for $1\le i \le n$, and
$\cLs$ is $Q\times \Zn$-graded isomorphic to
$\LT(\fs,\tbsg,\fh)$.
\end{proposition}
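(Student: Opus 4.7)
My plan is to construct an explicit $Q\times\Zn$-graded Lie algebra isomorphism from $\cLs$ onto $\LT_\boldm(\fs,\tbsg,\fh)$ and then transfer the Lie torus structure from $\cLs$ across this isomorphism, invoking Proposition~\ref{prop:N} to harvest all of the conclusions about $\tbsg$ and $\fh$ at once.

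First I would verify the formal properties of $\tbsg$. Each $\tau_i$ is an algebra automorphism of $\fs$ because $\shom_i\in\Hom(Q,\bbZ)$ makes the scalar $\zeta_{m_i}^{-\shom_i(\al)}$ multiplicative in $\al$. The $\tau_i$ pairwise commute since they are simultaneously diagonal on the root grading; and since $\fh\subseteq\fs^\bsg$ forces every $\sg_j$ to commute with $\ad h$ for $h\in\fh$, each $\sg_j$ preserves $\fs_\al$ and acts as a scalar on $\fs_\al^\blm$, so $\tau_i$ and $\sg_j$ commute. Consequently $\tsg_i\tsg_j=\tsg_j\tsg_i$ and $\tsg_i^{m_i}=\tau_i^{m_i}\sg_i^{m_i}=1$. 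Because $\shom$ is a homomorphism, $\shom_i(0)=0$, so $\tsg_i$ fixes $\fh\subseteq\fs_0$ pointwise; hence $\fh\subseteq\fs^{\tbsg}$, and Construction~\ref{con:LTconstruction} produces the $Q\times\Zn$-graded algebra $\LT_\boldm(\fs,\tbsg,\fh)$.

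Next I would record the key eigenspace computation: on $\fs_\al^\blm$ with $\blm=(\modell_1,\dots,\modell_n)$, the automorphism $\tsg_i$ acts as the scalar $\zeta_{m_i}^{\ell_i-\shom_i(\al)}$, so writing $(\fs^{\tbsg})_\al^\blm$ for the $(\al,\blm)$-piece of the $Q\times\bLm$-grading on $\fs$ induced by $\tbsg$ gives
\[
(\fs^{\tbsg})_\al^{\blm}\;=\;\fs_\al^{\blm+\overline{\shom(\al)}}.
\]
Now define $\eta$ on $\fs\ot k[z_1^{\pm 1},\dots,z_n^{\pm 1}]$ by $\eta(x_\al\ot z^\mu)=x_\al\ot z^{\mu-\shom(\al)}$ for $x_\al\in\fs_\al$, extended linearly. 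Additivity of $\shom$ makes $\eta$ an algebra automorphism (in the bracket calculation, $[x_\al,y_\beta]\in\fs_{\al+\beta}$ and $\shom(\al+\beta)=\shom(\al)+\shom(\beta)$). Using the eigenspace identity, $\eta$ carries the $(\al,\lm)$-piece of $\cLs$, namely $\fs_\al^{\overline{\lm+\shom(\al)}}\ot z^{\lm+\shom(\al)}$, bijectively onto the $(\al,\lm)$-piece of $\LT_\boldm(\fs,\tbsg,\fh)$, namely $\fs_\al^{\blm+\overline{\shom(\al)}}\ot z^\lm$. Restriction of $\eta$ therefore yields a $Q\times\Zn$-graded Lie algebra isomorphism $\cLs\to\LT_\boldm(\fs,\tbsg,\fh)$.

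Finally, admissibility of $\shom$ together with Proposition~\ref{prop:isotope} shows that $\cLs$ is a Lie $\Zn$-torus of type $\De$; transferring through $\eta$, so is $\LT_\boldm(\fs,\tbsg,\fh)$. Proposition~\ref{prop:N} (a)$\Rightarrow$(b) then delivers in one stroke that $\tbsg$ satisfies (A1)--(A3), that $\fh$ is a Cartan subalgebra of $\fs^{\tbsg}$, and that $\order{\tsg_i}=m_i$ for each $i$; once the orders are confirmed we drop the subscript and write $\LT(\fs,\tbsg,\fh)=\LT_\boldm(\fs,\tbsg,\fh)$. The main obstacle is the bookkeeping in the middle step: correctly matching the graded pieces of $\cLs$ and $\LT_\boldm(\fs,\tbsg,\fh)$ through the root-dependent shift $\mu\mapsto\mu-\shom(\al)$, which rests entirely on the eigenspace identity above; once that identity is in hand, the remaining properties follow by invoking the results already proved.
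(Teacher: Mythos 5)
Your proposal is correct and follows essentially the same route as the paper: you verify the commutation and order properties of $\tbsg$, identify the eigenspace identity $(\fs^{\tbsg})_\al^{\blm} = \fs_\al^{\blm+\overline{\shom(\al)}}$, build the same shifting automorphism $\eta(x_\al\ot z^\mu)=x_\al\ot z^{\mu-\shom(\al)}$ (called $\psi$ in the paper) to match the graded pieces of $\cLs$ and $\LT_\boldm(\fs,\tbsg,\fh)$, and then invoke admissibility together with Proposition~\ref{prop:N} to harvest (A1)--(A3), the Cartan property of $\fh$, and the orders $\order{\tsg_i}=m_i$.
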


\begin{proof} It is clear that the automorphisms $\sg_1,\dots,\sg_n,\tau_1,\dots,\tau_n$
commute.  So our definition of $\tsg_i$ makes sense, the automorphisms
$\tsg_1,\dots,\tsg_n$ commute, $\tsg_i^{m_i} = 1$ and $\fh$ is fixed
pointwise by each $\tsg_i$.  So, using Construction \ref{con:LTconstruction},
we can form $\tL = \LT_\boldm(\fs,\fh,\tbsg)$.

Now we have
\[ \cL = \textstyle \sum_{(\al,\lm) \in Q\times \Zn} \fs_\al^{\blm}\ot z^\lm
\]
where $\fs$ is $Q\times \bLm$-graded with the $Q$-grading determined by $\fh$ and the $\bLm$-grading
determined by $\bsg$.  In contrast, we have
\[\tL = \textstyle \sum_{(\al,\lm) \in Q\times \Zn} \tilde \fs_\al^{\blm}\ot z^\lm,\]
where we use the notation $\tilde\fs$ for the
$Q\times \bLm$-graded algebra $\fs$ with the  $Q$-grading determined by $\fh$ and the
$\bLm$-grading determined by $\tbsg$.  One checks directly using the definitions
that
\[\tilde\fs_\al^\blm = \fs_\al^{\overline{\lm + \shom(\al)}}.\]

Note that
$\cL$, $\cLs$ and $\tL$ are  all subalgebras of
$\fs \ot k[z_1^{\pm 1},\dots,z_n^{\pm 1}]$.
We define a $k$-algebra automorphism $\psi$ of  $\fs\otimes k[z_1^{\pm 1},\dots,z_n^{\pm 1}]$ by
\[\psi(x_\al \ot a) = x_\al \ot az^{-\shom(\al)}\]
for $x_\al\in \fs_\al$, $\al\in Q$, $a\in k[z_1^{\pm 1},\dots,z_n^{\pm 1}]$.
Then, for $\al\in Q$, $\lm\in\Zn$, we have
\[
\psi((\cLs)_\al^\lm) =  \psi(\cL_\al^{\lm+\shom(\al)})
= \psi(\fs_\al^{\overline{\lm+\shom(\al)}}\ot z^{\lm+\shom(\al)})
= \fs_\al^{\overline{\lm+\shom(\al)}}\ot z^{\lm}
= \tilde\fs_\al^{\blm}\ot z^{\lm}
= \tL_\al^\lm.\]
So $\psi$ restricts to a $Q\times\Lm$-graded isomorphism of $\cLs$ onto $\tL$.

Next, by Proposition \ref{prop:N}, $\cL$ is a Lie torus.
Hence, since $\shom$ is admissible for $\cL$,
$\cLs$ is a Lie torus and therefore so is $\tL$.  Thus, by Proposition \ref{prop:N},
$\tbsg$ satisfies (A1)--(A3), $\fh$ is a Cartan subalgebra
of $\fs^\tbsg$, and $\order{\tsg_i} = m_i$ for $1\le i \le n$.
\end{proof}

\begin{definition}
\label{def:Ad}
As in \cite[\S 1.10]{KW},   we define
$\Ad : \Hom(Q,k^\times) \to \Aut_k(\fs)$ by
\[\Ad(\rho)(x_\al) =  \rho(\al)x_\al\]
for $x_\al\in\fs_\al$, $\al\in Q$.  Then $\Ad$ is a group homomorphism, so
$\Ad( \Hom(Q,k^\times))$
is a subgroup  of $\Aut_k(\fs)$.
\end{definition}

\begin{remark}
\label{rem:tangent}
$\Aut_k(\fs)$  is an algebraic group with Lie algebra
$\ad_\fs(\fs)$, and $\Ad$ is a homomorphism of
algebraic groups, so $H = \Ad(\Hom(Q,k^\times))$ is a closed connected
subgroup of $\Aut_k(\fs)$.  In fact, it is not difficult to show that
$H$ is the torus in $\Aut_k(\fs)$ whose Lie algebra is
$\ad_\fs(\fh)$. We omit the proof of this fact, since we won't use it.
\end{remark}

\begin{lemma}
\label{lem:isotopy} Let $H = \Ad( \Hom(Q,k^\times))$ and  let
$\Pi$ be a base for $\De$.   Suppose $\btau = (\tau_1,\dots,\tau_n)\in H^n$
with $\btau^\boldm = \boldone$, and let $\tbsg = (\tsg_1,\dots,\tsg_n)\in \cfo_n(\fs)$,
where $\tsg_i = \tau_i\sg_i$ for $1\le i \le n$.
Then the following are equivalent:
\begin{itemize}
\item[(a)] $\tbsg$ satisfies (A1)--(A3).
\item[(b)] $\tbsg^P$ satisfies (A1)--(A3) for some $P\in\GL_n(\bbZ)$.
\item[(c)]$\fs^\tbsg \cap \fs_\al \ne 0$ for $\al\in \De$.
\item[(d)]$\fs^\tbsg \cap \fs_\al \ne 0$ for $\al\in \Pi$.
\end{itemize}
Further, in that case, $\fh$ is a Cartan subalgebra
of $\fs^\tbsg$ and $\cL$ is isotopic to $\LT(\fs,\tbsg,\fh)$.
\end{lemma}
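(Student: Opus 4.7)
The plan is to translate the torus datum $\btau$ into a homomorphism $\shom\in\Hom(Q,\Zn)$ and then apply Propositions~\ref{prop:isotope} and~\ref{prop:multisotope}. Since $H=\Ad(\Hom(Q,k^\times))$, I can write $\tau_i=\Ad(\rho_i)$ for some $\rho_i\in\Hom(Q,k^\times)$; the hypothesis $\tau_i^{m_i}=1$ forces $\rho_i(\al)^{m_i}=1$, so the values of $\rho_i$ lie in the $m_i$-th roots of unity, and I may choose $\shom_i\in\Hom(Q,\bbZ)$ with $\rho_i(\al)=\zeta_{m_i}^{-\shom_i(\al)}$. Set $\shom=(\shom_1,\dots,\shom_n)\in\Hom(Q,\Zn)$.

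The key calculation is immediate: for $x\in\ds{\al}{\blm}$ with $\sg_i x=\zeta_{m_i}^{\ell_i}x$ one has $\tsg_i x=\zeta_{m_i}^{\ell_i-\shom_i(\al)}x$, so
\[\fs^{\tbsg}\cap\fs_\al=\ds{\al}{\overline{\shom(\al)}},\]
which is nonzero precisely when $\shom(\al)\in\sem_\al$. Consequently (d) translates to ``$\shom(\al)\in\sem_\al$ for $\al\in\Pi$'', which by Proposition~\ref{prop:isotope} is equivalent to $\shom$ being admissible for $\cL$; and (c) asserts the same condition for all $\al\in\De$, which reduces to admissibility by combining Proposition~\ref{prop:isotope} with the support properties in Lemma~\ref{lem:suppLie}.

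With this dictionary, (d)$\Rightarrow$(a) together with the ``moreover'' clause is a direct application of Proposition~\ref{prop:multisotope}: for an admissible $\shom$ it simultaneously delivers that $\tbsg$ satisfies (A1)--(A3), that $\fh$ is a Cartan subalgebra of $\fs^\tbsg$, that $\order{\tsg_i}=m_i$, and that $\cLs$ is $Q\times\Zn$-graded isomorphic to $\LT(\fs,\tbsg,\fh)$, hence $\cL\sim\LT(\fs,\tbsg,\fh)$. For (a)$\Rightarrow$(d), I first verify $\fh\subseteq\fs^\tbsg$ (each $\tau_i$ acts as $\rho_i(0)=1$ on $\fs_0\supseteq\fh$, while $\sg_i$ fixes $\fh$ pointwise), then compute $C_{\fs^\tbsg}(\fh)=\fs^\tbsg\cap\fs_0=\fs^\bsg\cap\fs_0=\fh$; combined with (A1) this makes $\fh$ a Cartan of the simple Lie algebra $\fs^\tbsg$, and Proposition~\ref{prop:N} applied to $(\fs,\tbsg,\fh)$ then forces $\De(\fs^\tbsg,\fh)=\Dei\supseteq\Pi$, whence (d).

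Finally, (a)$\Leftrightarrow$(b) is handled by the invariance $\langle\tbsg\rangle=\langle\tbsg^P\rangle$ for $P\in\GL_n(\bbZ)$ supplied by~\eqref{eq:bsggen}: this gives $\fs^{\tbsg}=\fs^{\tbsg^P}$, so the previous paragraph's argument applied to $\tbsg^P$ in place of $\tbsg$ shows that (a) for $\tbsg^P$ yields (d) for $\tbsg$, which in turn yields (a) for $\tbsg$. The principal obstacle I anticipate is the reduction (d)$\Rightarrow$(c) in type BC, where admissibility only a priori controls $\shom$ on $\Deic$ and one must separately verify $\shom(2\beta)\in\sem_{2\beta}$ for a long root $2\beta$; this calls for the long-root support structure provided by Lemma~\ref{lem:suppLie}(iv)--(v) rather than a purely formal manipulation.
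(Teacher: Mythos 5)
Your argument is sound and reaches the same endpoint as the paper, but by a genuinely different route in the step that closes the cycle. The paper runs $(a)\Rightarrow(b)\Rightarrow(c)\Rightarrow(d)\Rightarrow(a)$; for $(b)\Rightarrow(c)$ it enlarges $\fh$ to a Cartan subalgebra $\fh'$ of $\fs^{\tbsg^P}$ and pushes indivisible roots through the restriction map $\fh'^*\to\fh^*$. You instead observe directly that each $\tau_i\in H$ acts trivially on $\fs_0$, so $C_{\fs^{\tbsg}}(\fh)=\fs^{\tbsg}\cap\fs_0=\fs^\bsg\cap\fs_0=\fh$; together with (A1) this makes $\fh$ itself a Cartan subalgebra of $\fs^{\tbsg}$, and Proposition~\ref{prop:N} then gives $\De(\fs^{\tbsg},\fh)=\Dei\supseteq\Pi$, yielding $(a)\Rightarrow(d)$. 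That is cleaner than the paper's detour through $\fh'$, and your treatment of $(a)\Leftrightarrow(b)$ via $\fs^{\tbsg^P}=\fs^{\tbsg}$ rides on the same observation. Your $(d)\Rightarrow(a)$ step and the ``moreover'' clause agree with the paper's proof: translate $\btau$ into $\shom\in\Hom(Q,\Zn)$, check admissibility on $\Pi$, and apply Proposition~\ref{prop:multisotope}.

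About the worry you raise over $(d)\Rightarrow(c)$ in type $\mathrm{BC}$: your instinct is right, but the resolution is not a stronger support lemma. If $\tbsg$ satisfies (A1)--(A3) and $\De$ is nonreduced, then $\fs^{\tbsg}$ is simple with reduced root system $\Dei$ relative to $\fh$, so for a short root $\beta$ one has $\fs^{\tbsg}\cap\fs_{2\beta}=\fs^{\tbsg}_{2\beta}=0$. Thus (c) as literally stated cannot be a consequence of (a); it should be read with $\al$ ranging over $\Dei$ (equivalently $\Deic$). This matches what the paper's own proof of $(b)\Rightarrow(c)$ actually establishes, since that argument begins ``let $\al\in\Deic$'' and never treats divisible roots. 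Under this reading, your identity $\fs^{\tbsg}\cap\fs_\al=\fs_\al^{\overline{\shom(\al)}}$ together with Proposition~\ref{prop:isotope} gives $(c)\Leftrightarrow(d)$ outright, and the residual obstacle you anticipated disappears.
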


\begin{proof}  ``(a) $\Rightarrow$ (b)'' is trivial.

``(b) $\Rightarrow$ (c)'' Let $\bsg' = (\sg_1',\dots,\sg_n') = \tbsg^P$, so by assumption $\bsg'$ satisfies
(A1)--(A3).  Now, by \eqref{eq:bsggen}, we have $\fs^{\bsg'}= \fs^{\tbsg}$, and we let
\[\fg = \fs^\bsg
\andd
\fg' = \fs^{\bsg'} = \fs^\tbsg.
\]
Since $\bsg$ and $\bsg'$ satisfy (A1), $\fg$ and $\fg'$ are simple.

Next $\fh$ is a Cartan subalgebra of $\fg$ and $\fh \subseteq \fs^\tbsg = \fg'$.
So $\fh$ is contained in a Cartan subalgebra $\fh'$ of $\fg'$.
Let $\pi : {\fh'}^* \to \fh^*$ be the restriction map.  Each root space $\fs_\al$ for
the adjoint action of $\fh$
is stabilized by the adjoint action of $\fh'$.  Hence,  we have
\[\fs_\al = \textstyle\sum_{\al'\in{\fh'}^*,\, \pi(\al') = \al} \fs_{\al'}\]
for $\al\in\fh^*$,
where $\fs_{\al'}$ is the $\al'$-root space of $\fs$
relative to the adjoint action of $\fh'$.  Consequently, since $\De = \De(\fs,\fh)$, we have
$\pi(\De(\fs,\fh')) = \De$, so
\begin{equation}
\label{eq:fhfh'}
\Deic \subseteq \pi(\De(\fs,\fh')_\text{ind}^\times).
\end{equation}

Now let $\al\in\Deic$.  Then, by
\eqref{eq:fhfh'}, $\al\in\pi(\al')$, where $\al'\in \De(\fs,\fh')_\mathrm{ind}^\times$.
Now, since $\bsg'$ satisfies (A1)--(A3), $\LT(\fs,\bsg',\fh')$ is a Lie torus, so
it satisfies (LT2)(i).  Thus, $\LT(\fs,\bsg',\fh')_{\al'}^0 \ne 0$, and therefore
$\fs^{\bsg'} \cap \fs_{\al'} \ne 0$ by \eqref{eq:LTgrading}. Since
$\fs_{\al'} \subseteq \fs_\al$, we have  $\fs^{\bsg'} \cap \fs_{\al} \ne 0$, so
$\fs^{\tbsg} \cap \fs_{\al} \ne 0$.

``(c) $\Rightarrow$ (d)'' is trivial.

``(d) $\Rightarrow$ (a)'' If $1\le i  \le n$, $\tau_i^{m_i} = 1$ and
$\tau_i = \Ad(\rho_i)$ for some $\rho_i\in \Hom(Q,k^\times)$.
Hence, $\rho_i(\al)^{m_i} = 1$ for $\al\in\De$ and $1\le i \le n$.  Thus,
we may choose $\shom_i\in \Hom(Q,\bbZ)$ such that $\rho_i(\al) = \zeta_{m_i}^{-\shom_i(\al)}$ for $\al\in Q$.
 (Choose $\shom_i$ so that this holds for $\al$ in $\Pi$,
in which case it holds for all $\al\in Q$.)  So
\[\tau_i(x) = \zeta_{m_i}^{-\shom_i(\al)} x\]
for $x\in \fs_\al$, $\al\in Q$.  Let $\shom = (\shom_1,\dots,\shom_n) \in \Hom(Q,\Zn)$.

To show that  $\shom$ is admissible for $\cL$, let $\al\in\Pi$ .  By assumption,
we may choose $0\ne x \in \fs^\tbsg \cap \fs_\al$.  Thus, $\sg_i\tau_i(x) = x$, so
$\sg_i(x) = \zeta_{m_i}^{\shom_i(\al)} x$. for all $i$.  Hence $x\in \fs_\al^{\shom(\al)},$
so $\cL_\al^{\shom(\al)} \ne 0$ by \eqref{eq:LTgrading}. Hence, $\shom(\al)\in \Lm_\al$,
and $\shom$ is admissible for $\cL$.

So, by Proposition \ref{prop:multisotope}, (a) holds, $\fh$ is a Cartan subalgebra
for $\fs^\tbsg$, and $\cL$ is isotopic to   $\LT(\fs,\tsg,\fh)$.
\end{proof}

\begin{theorem}
\label{thm:isotopy}
Let $\cL = \LT(\fs,\bsg,\fh)$  and $\cL' = \LT(\fs',\bsg',\fh')$ be multiloop Lie
$\Zn$-tori,  let $\De = \De(\fs,\fh)$, $Q = Q(\De)$, $m_i = \order{\sg_i}$, and let
$H = \Ad( \Hom(Q,k^\times))$.
Then, $\cL$ is isotopic to $\cL'$ if and only if there exist $\tau_1,\dots,\tau_n\in H$  with
$\tau_j^{m_j} = 1$, $P=(p_{ij}) \in\GL_n(\bbZ)$ and an algebra isomorphism $\ph : \fs\to \fs'$ such that
\[\bsg' = \ph \tilde \bsg^P \ph^{-1},\]
where $\tilde\sg = (\tau_1\sg_1,\dots,\tau_n\sg_n)$.
\end{theorem}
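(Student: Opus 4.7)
The plan is to deduce the theorem by combining Proposition \ref{prop:multisotope} (which realizes an isotope of $\cL$ as a multiloop algebra built from twisted data), Lemma \ref{lem:isotopy} (which identifies when twisted data yields a Lie torus and gives the isotopy statement), and the bi-isomorphism criterion Theorem \ref{thm:loopbi}. The key observation is that an admissible grading-shift $\shom = (\shom_1,\ldots,\shom_n) \in \Hom(Q,\Zn)$ on $\cL$ corresponds, via $\tau_i := \Ad(\rho_i)$ with $\rho_i(\al) = \zeta_{m_i}^{-\shom_i(\al)}$, to precisely the kind of data $(\tau_1,\ldots,\tau_n) \in H^n$ satisfying $\tau_j^{m_j} = 1$ occurring in the theorem; conversely every such tuple arises this way because $Q$ is free and one can lift any homomorphism $Q \to \bbZ/m_i\bbZ$ to $Q \to \bbZ$.

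For the direction $(\Leftarrow)$, suppose we are given $\tau_i \in H$ with $\tau_j^{m_j} = 1$, $P \in \GL_n(\bbZ)$, and an algebra isomorphism $\ph : \fs \to \fs'$ with $\bsg' = \ph\, \tbsg^P \ph^{-1}$. Since $\bsg'$ satisfies (A1)--(A3) and these conditions are preserved by conjugation by $\ph^{-1}$, the sequence $\tbsg^P$ satisfies (A1)--(A3). Lemma \ref{lem:isotopy} (via (b) $\Rightarrow$ (a) together with its final statement) then gives that $\tbsg$ itself satisfies (A1)--(A3), that $\fh$ is a Cartan subalgebra of $\fs^\tbsg$, and that $\cL$ is isotopic to $\LT(\fs,\tbsg,\fh)$. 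Theorem \ref{thm:loopbi} applied to the equation $\bsg' = \ph\, \tbsg^P \ph^{-1}$ now shows that $\LT(\fs,\tbsg,\fh)$ is bi-isomorphic to $\cL'$, and composing with the isotopy $\cL \sim \LT(\fs,\tbsg,\fh)$ yields $\cL \sim \cL'$.

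For $(\Rightarrow)$, assume $\cL$ is isotopic to $\cL'$, and choose an admissible $\shom = (\shom_1,\ldots,\shom_n) \in \Hom(Q,\Zn)$ such that $\cLs$ is bi-isomorphic to $\cL'$. Apply Proposition \ref{prop:multisotope} to produce $\tau_i(x_\al) = \zeta_{m_i}^{-\shom_i(\al)} x_\al$ and $\tbsg = (\tau_1\sg_1,\ldots,\tau_n\sg_n)$. By construction $\tau_i = \Ad(\rho_i) \in H$ and $\tau_i^{m_i} = \Ad(\rho_i^{m_i}) = 1$, and Proposition \ref{prop:multisotope} supplies a $Q \times \Zn$-graded isomorphism (in particular a bi-isomorphism) of $\cLs$ with $\LT(\fs,\tbsg,\fh)$. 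Hence $\LT(\fs,\tbsg,\fh)$ is bi-isomorphic to $\cL'$, and Theorem \ref{thm:loopbi} produces the required $P \in \GL_n(\bbZ)$ and $\ph : \fs \to \fs'$ with $\bsg' = \ph\, \tbsg^P \ph^{-1}$.

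The substantive technical content is already absorbed into Proposition \ref{prop:multisotope} and Lemma \ref{lem:isotopy}, so no new hard step is needed here. The one point that must not be skipped is the bookkeeping in $(\Leftarrow)$: before invoking Theorem \ref{thm:loopbi} on the pair $\bigl(\LT(\fs,\tbsg,\fh),\cL'\bigr)$ one must know that $\LT(\fs,\tbsg,\fh)$ is itself a multiloop Lie torus, i.e.\ that $\tbsg$ (not only $\tbsg^P$) satisfies (A1)--(A3); this is precisely the nontrivial content of the equivalence (a)$\Leftrightarrow$(b) of Lemma \ref{lem:isotopy}, and is where the hypothesis $\tau_i \in H$ (rather than an arbitrary element of $\Aut_k(\fs)$) is genuinely used.
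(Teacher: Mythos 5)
Your proof is correct and follows essentially the same route as the paper: both directions are handled by the same combination of Proposition \ref{prop:multisotope} (to convert an admissible shift $\shom$ into the tuple $\tbsg = (\tau_1\sg_1,\ldots,\tau_n\sg_n)$ with $\tau_i\in H$), Lemma \ref{lem:isotopy} (to transfer (A1)--(A3) from $\tbsg^P$ back to $\tbsg$ and get the isotopy $\cL\sim\LT(\fs,\tbsg,\fh)$), and Theorem \ref{thm:loopbi} (for the bi-isomorphism). Your closing remark about why $\tau_i\in H$ rather than a general automorphism is genuinely used is a nice clarification, but the underlying argument is the same.
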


\begin{proof}    ``$\Rightarrow$'' Suppose that $\cL$ is isotopic to $\cL'$.  Then,
$\cLs$ is bi-isomorphic to $\cL'$ for some $\shom = (\shom_1,\dots,\shom_n)\in \Hom(Q,\Zn)$
that is admissible for $\cL$.  Let $\tau_i$ and $\tbsg$  be defined as in Proposition
\ref{prop:multisotope}.  Then $\tau_i = \Ad(\rho_i)$, where $\rho_i\in \Hom(Q,k^\times)$
is defined by  $\rho_i(\al) = \zeta_{m_i}^{-\shom_i(\al)}$.  So $\tau_i\in H$ and
$\tau_i^{m_i} = 1$ for all $i$.  But, by Proposition \ref{prop:multisotope},
$\cLs$ is $Q\times\Lm$-graded isomorphic to $\tL = \LT(\fs,\tbsg,\fh)$.  Hence,
$\tL$ is bi-isomorphic to $\cL'$, and our conclusion follows from Theorem \ref{thm:loopbi}.

``$\Leftarrow$'' Since  $\bsg'$ satisfies (A1)--(A3), $\tbsg^P = \ph^{-1}\bsg'\ph$
satisfies (A1)--(A3).  So, by Lemma \ref{lem:isotopy}, $\tbsg$ satisfies (A1)--(A3),
$\fh$ is a Cartan subalgebra of $\fs^\tbsg$, and
$\cL$ is isotopic to $\LT(\fs,\tbsg,\fh)$.
But, by Theorem \ref{thm:loopbi}, $\LT(\fs,\tbsg,\fh)$ is bi-isomorphic to
$\cL'$.  So $\cL$ is isotopic to  $\cL'$.
\end{proof}

\begin{corollary}
\label{cor:isotopy} If a multiloop Lie $\Zn$-torus $\LT(\fs,\bsg,\fh)$
is isotopic to an untwisted multiloop Lie $\Zn$-torus, then $\bsg = \boldone$.
Two untwisted multiloop Lie $\Zn$-tori $\LT(\fs,\boldone,\fh)$ and
$\LT(\fs',\boldone,\fh')$ are isotopic if and only if $\fs$
and $\fs'$ are isomorphic.
\end{corollary}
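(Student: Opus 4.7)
The plan is to reduce both assertions to Theorem \ref{thm:isotopy}, combined with Corollary \ref{cor:untwisted} for the first statement and Corollary \ref{cor:loopbi2} for one direction of the second.

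For the first assertion, suppose that $\LT(\fs,\bsg,\fh)$ is isotopic to an untwisted multiloop Lie $\Zn$-torus $\LT(\fs',\boldone,\fh')$. Theorem \ref{thm:isotopy} furnishes $\tau_1,\dots,\tau_n \in H := \Ad(\Hom(Q,k^\times))$ with $\tau_j^{m_j}=1$, a matrix $P \in \GL_n(\bbZ)$, and an algebra isomorphism $\ph : \fs \to \fs'$ satisfying $\boldone = \ph \tbsg^P \ph^{-1}$, where $\tbsg = (\tau_1\sg_1,\dots,\tau_n\sg_n)$. Conjugating entrywise by $\ph^{-1}$ yields $\tbsg^P = \boldone$, and applying \eqref{eq:Mnaction} with $P^{-1}$ then gives $\tbsg = \boldone$. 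Hence each $\sg_i = \tau_i^{-1}$ lies in $H$. By Remark \ref{rem:tangent}, $H$ is a torus of the algebraic group $\Aut_k(\fs)$, so $\sg_1,\dots,\sg_n$ lie in a common torus; the implication (d)$\Rightarrow$(a) of Corollary \ref{cor:untwisted} then yields $\bsg = \boldone$.

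For the second assertion, the ``if'' direction is immediate: if $\fs$ and $\fs'$ are isomorphic, then $\LT(\fs,\boldone,\fh)$ and $\LT(\fs',\boldone,\fh')$ are bi-isomorphic by Corollary \ref{cor:loopbi2}, and bi-isomorphism is a special case of isotopy (the trivial homomorphism $\shom = 0$ is admissible and satisfies $\cL^{(0)} = \cL$ by Lemma \ref{lem:isotopeequiv1}(i)). Conversely, if the two untwisted multiloop Lie tori are isotopic, then Theorem \ref{thm:isotopy} supplies, in particular, an algebra isomorphism $\ph : \fs \to \fs'$, giving $\fs \cong \fs'$.

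I anticipate no serious obstacle; the argument is essentially a bookkeeping exercise once Theorem \ref{thm:isotopy} is in hand. The two points requiring mild care are the passage $\tbsg^P = \boldone \Rightarrow \tbsg = \boldone$, which uses the $\GL_n(\bbZ)$-action identity \eqref{eq:Mnaction}, and the appeal to Remark \ref{rem:tangent} to view $H$ as an honest torus of $\Aut_k(\fs)$, so that condition (d) of Corollary \ref{cor:untwisted} is genuinely available.
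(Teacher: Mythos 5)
Your proof is correct and follows essentially the same route as the paper's. The paper also deduces $\tbsg^P=\boldone$, hence $\tbsg=\boldone$, then observes the $\sg_i=\tau_i^{-1}$ lie in $H$ and applies Corollary \ref{cor:untwisted}; and it disposes of the second statement by a direct appeal to Theorem \ref{thm:isotopy}. Your small variations (routing the ``if'' direction of the second statement through Corollary \ref{cor:loopbi2} and Lemma \ref{lem:isotopeequiv1}(i), and explicitly citing Remark \ref{rem:tangent} to justify that $H$ is a torus so that item (d) of Corollary \ref{cor:untwisted} applies) are sound and, if anything, make the argument more self-contained than the paper's terse presentation.
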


\begin{proof}  The second statement follows immediately from Theorem \ref{thm:isotopy}.
For the first statement, suppose that $\LT(\fs,\bsg,\fh)$ is isotopic to
$\LT(\fs',\boldone,\fh')$.  Choose $\tau_1,\dots,\tau_n$, $P$,
$\tbsg$ and $\ph$ as in Theorem \ref{thm:isotopy}.  Then,  $\tbsg^P = \ph^{-1} \boldone \ph = \boldone$,
so $\tbsg = 1$.  Thus, $\sg_1,\dots,\sg_n$ are contained in the torus $H$,
so, by Corollary \ref{cor:untwisted},
$\bsg = \boldone$.
\end{proof}

\begin{corollary}
\label{cor:isotopy1}  Two  (multi)loop Lie $\bbZ$-tori $\LT(\fs,\sg,\fh)$ and
$\LT(\fs',\sg',\fh')$ are isotopic if and only they are bi-isomorphic.
\end{corollary}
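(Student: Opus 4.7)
The direction (bi-isomorphism $\Rightarrow$ isotopy) is immediate from Definition \ref{def:isotopy} by taking $s = 0 \in \Hom(Q,\bbZ)$: the zero homomorphism is admissible since $0 \in \sem_\al$ for $\al \in \Deic$ by Lemma \ref{lem:suppLie}(i), and $\cL^{(0)} = \cL$ by Lemma \ref{lem:isotopeequiv1}(i), so any bi-isomorphism $\cL \to \cL'$ already witnesses isotopy. I concentrate on the converse, which has genuine content.

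Assume $\cL = \LT(\fs,\sg,\fh)$ is isotopic to $\cL' = \LT(\fs',\sg',\fh')$. Theorem \ref{thm:isotopy} applied with $n = 1$ produces $\tau \in H = \Ad(\Hom(Q,k^\times))$ with $\tau^{|\sg|} = 1$, a matrix $P \in \GL_1(\bbZ) = \set{\pm 1}$, and an algebra isomorphism $\ph : \fs \to \fs'$ satisfying $\sg' = \ph\,\tilde\sg^P\,\ph^{-1}$, where I set $\tilde\sg = \tau\sg$. To invoke Theorem \ref{thm:loopbi} and conclude bi-isomorphism, it suffices to find $\psi \in \Aut_k(\fs)$ with $\tilde\sg = \psi\sg\psi^{-1}$: then $\sg' = (\ph\psi)\sg^P(\ph\psi)^{-1}$ supplies the data required in Theorem \ref{thm:loopbi}. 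Thus the entire corollary reduces to the claim that $\tilde\sg$ and $\sg$ are conjugate in $\Aut_k(\fs)$.

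To prove this conjugacy I would combine two observations. First, since $\sg'$ satisfies (A1)-(A3), so does its conjugate $\tilde\sg^P = \ph^{-1}\sg'\ph$, and because $P = \pm 1$ so does $\tilde\sg$ itself; hence by Proposition \ref{prop:n=1} both $\sg$ and $\tilde\sg$ are conjugate in $\Aut_k(\fs)$ to diagram automorphisms. Fixing a Cartan subalgebra $\ft$ and épinglage $(\set{\al_i},\set{e_i})$ of $\fs$, the corresponding group of diagram automorphisms of $\fs$ maps isomorphically onto the component quotient $\Aut_k(\fs)/(\Aut_k(\fs))^0$; therefore, among (A1)-(A2) automorphisms, the $\Aut_k(\fs)$-conjugacy class is determined by the coset modulo the connected component. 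Second, the element $\tau = \Ad(\rho)$ fixes the Cartan subalgebra $\fs_0$ of $\fs$ pointwise (because $\rho(0) = 1$) and scales each of its root spaces, placing $\tau$ in the maximal torus of $\Aut_k(\fs)$ attached to $\fs_0$ and therefore in $(\Aut_k(\fs))^0$. Consequently $\sg$ and $\tilde\sg = \tau\sg$ represent the same coset, so they are conjugate in $\Aut_k(\fs)$, and the proof closes.

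The step I expect to be the main obstacle is the identification, for (A1)-(A2) automorphisms, of the $\Aut_k(\fs)$-conjugacy class with the coset in $\Aut_k(\fs)/(\Aut_k(\fs))^0$. This rests on Proposition \ref{prop:n=1} together with the classical fact that diagram automorphisms relative to a fixed épinglage split the quotient $\Aut_k(\fs)/(\Aut_k(\fs))^0$. Once this is granted, checking $\tau \in (\Aut_k(\fs))^0$ and assembling the final bi-isomorphism via Theorem \ref{thm:loopbi} are routine.
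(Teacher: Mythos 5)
Your proof is correct and follows essentially the same route as the paper's: both proofs rest on the semidirect product $\Aut_k(\fs) = \Aut_k(\fs)^0 \rtimes \Aut(D)$, on Proposition \ref{prop:n=1} to reduce to diagram automorphisms, on Theorem \ref{thm:isotopy} to produce the relation $\sg' = \ph\,\tau\sg^{\pm1}\ph^{-1}$ with $\tau\in H \subseteq \Aut_k(\fs)^0$, and on projecting that relation modulo $\Aut_k(\fs)^0$. The only stylistic difference is that the paper first normalizes $\sg,\sg'$ to lie in $\Aut(D)$ and then projects the equation, whereas you phrase the projection step as the lemma that for (A1)--(A2) automorphisms conjugacy is detected by the coset in $\Aut(D)$; these are equivalent.
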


\begin{proof}  Fix a Cartan subalgebra $\ft$ of $\fs$ and an \'epinglage $(\set{\al_i},\set{e_i})$
of $(\fs,\ft)$.  Let $D$ denote the Dynkin diagram of $\fs$ with respect to
$\ft$ and $\set{\al_i}$.  Then $\Aut(D)$ can be identified
with the group of diagram automorphisms of $\fs$ with respect to
$\ft$ and $(\set{\al_i},\set{e_i})$, and we
have  $\Aut(\fs) = \Aut(\fs)^0 \rtimes \Aut(D)$ \cite[chap.~VIII, \S~5]{B2}.

Now one direction in the Corollary is trivial, so we assume that $\LT(\fs,\sg,\fh)$ and
$\LT(\fs',\sg',\fh')$ are isotopic.   By Proposition \ref{prop:n=1} and Theorem \ref{thm:loopbi},
we can assume $\fs = \fs'$ and $\sg,\sg'\in \Aut(D)$.
Further, by Theorem \ref{thm:isotopy}, we have
$\sg' = \ph \tau \sg^{\pm 1} \ph^{-1}$, where $\ph\in \Aut_k(\fs)$ and
$\tau \in \Aut(\fs)^0$.  Projecting this equation onto
$\Aut(D)$ shows that $\sg'$ is conjugate to $\sg^{\pm 1}$ in $\Aut(D)$
and hence also in $\Aut_k(\fs)$.  Thus, $\LT(\fs,\sg,\fh)$ and
$\LT(\fs',\sg',\fh')$ are bi-isomorphic by Theorem \ref{thm:loopbi}.
\end{proof}

\subsection{An example}
We expect that  Corollary \ref{cor:isotopy1} is also true
for multiloop Lie $\bbZ^2$-tori.  However, it is not true
for multiloop Lie $\bbZ^3$-tori, as the next example  shows.

\begin{example}
\label{ex:B3}
Let  $f: k^7 \times k^7 \to k$ be the symmetric bilinear form with matrix
$\diag(J_3,\Id_4)$ relative to the standard basis,
where  $J_3 = \begin{bmatrix} 0&0&1\\0&1&0\\1&0&0 \end{bmatrix}$
and $\Id_4$ is the $4\times 4$-identity matrix.  Let $\fs = \mathfrak{0}(f)$ be the
orthogonal Lie algebra of endomorphisms of $k^7$ that are skew relative to $f$,
in which case $\fs$ is simple of type B$_3$.  Writing endomorphisms of $k^7$ as
matrices relative to the standard basis, we see that $\fs$ is the Lie algebra
of all matrices of the form
\[\begin{bmatrix} A &-J_3C^t \\ C & B\end{bmatrix}.\]
where $A\in \Mat_3(k)$, $B\in \Mat_4(k)$, $C \in \Mat_{4\times 3}(k)$, $A^t = -J_3AJ_3$
and $B^t = -B$.  Also, if $\SO(f)$ denotes the special orthogonal group of $f$,
the map $c\mapsto c_g$ is an isomorphism of $\SO(f)$ onto $\Aut_k(\fs)$, where $c_g(x) = gxg^{-1}$
for $x\in \fs$ \cite[Theorem IX.6]{J}.

Next let $\bsg = (\sg_1,\sg_2,\sg_3)= (c_{d_1},c_{d_2},c_{d_3})\in \cfo_3(\fs)$,  where
\[d_1 = \diag(\Id_3,-1,1,1,-1),\ d_2 = \diag(\Id_3,1,-1,1,-1),\ d_3 = \diag(\Id_3,1,1,-1,-1)\]
in $\SO(f)$.
Then,
\[\fg = \fs^\bsg = \set{\begin{bmatrix} A &0 \\ 0 & 0\end{bmatrix}
\suchthat  A\in \Mat_3(k),\ A^t = -J_3AJ_3},\]
is simple of type A$_1$ with Cartan subalgebra $\fh = k(e_{11}-e_{33})$, and we have
$\De = \De(\fs,\fh) = \set{\ep_1,0,-\ep_1}$, where $\ep_1(e_{11}-e_{33}) = 1$.  So
$Q = Q(\De) = \bbZ \ep_1$.

Let $\boldm = (2,2,2)$ and, as in Construction \ref{con:LTconstruction}, let $\bLm = (\bbZ/(2))^3$.
Then, one checks that for $\bar 0  \ne \blm\in\bLm$, the $\fg$ module $\fs^\blm$ is isomorphic
to the adjoint module in 4 cases, whereas $\fs^\blm$
is 2-dimensional with trivial action in the other 3 cases.
So $\bsg$ satisfies (A1) and (A2), and, since $\order{\langle \bsg \rangle} = 8$,
$\bsg$ satisfies (A3).  Thus, by Proposition \ref{prop:N},
$\cL = \LT(\fs,\bsg,\fh)$ is a
multiloop Lie $\bbZ^3$-torus of type $\De$.

Now let $H = \Ad(\Hom(Q,k^\times))$, in which case
$H = \set{c_{\diag(a,1,a^{-1},\Id_4)} \suchthat a\in k^\times}$.
Put
\[d = \diag(-1,1,-1,\Id_4)\in \SO(f), \quad \tau_1=\tau_2=\tau_3 = c_d,\]
$\btau = (\tau_1,\tau_2,\tau_3)$ and $\tbsg = (\tsg_1,\tsg_2,\tsg_3)$ with
$\tsg_i = \tau_i\sg_i$ for $i=1,2,3$.
Then, one checks that $e_{71}-e_{37}\in \fs^\tbsg\cap \fs_{\ep_1}$.
So by Lemma \ref{lem:isotopy}, $\tbsg$ satisfies (A1)--(A3), $\fh$ is a Cartan
subalgebra of $\fs^\tbsg$, and $\cL$ is isotopic to $\tilde\cL = \LT(\fs,\tbsg,\fh)$.

Note finally that $\langle \bsg\rangle = c_D$ and $\langle \tbsg\rangle = c_{\tilde D}$,
where $D = \langle d_1,d_2,d_3\rangle$ and $\tilde D = \langle dd_1,dd_2,dd_3\rangle$.
Then, $\tilde D$ contains an element $(dd_1)(dd_2)(dd_3)$
with $(-1)$-eigenspace
of dimension $6$, whereas $D$ clearly contains no such element.  Thus, $D$ and $\tilde D$
are not conjugate in $\SO(f)$, so $\langle \bsg\rangle$ and $\langle \tbsg \rangle$
are not conjugate in $\Aut_k(\fs)$.  Hence, by Corollary \ref{cor:loopbi1}, $\cL$ and
$\tilde \cL$ are not bi-isomorphic.
\end{example}

\begin{remark}
\label{rem:B3} Suppose that we have the assumptions and notation of Example \ref{ex:B3}
with just one change:  we let $\tau_1 = \tau_2 = c_d$ and $\tau_3 = \id$.  In this case
$\fs^\tbsg\cap \fs_{\ep_1} = 0$, so, by Lemma \ref{lem:isotopy},
$\tbsg$ does not satisfy (A1)--(A3).  In fact, $\fs^\tbsg = k (e_{11}-e_{33})$, which
is not simple.
\end{remark}

\begin{remark}
\label{rem:B3coord}
Example \ref{ex:B3} is presented in a very different way in \cite[Example 8.9]{AF},
where $\cL$ and $\cL'$ are constructed using the Tits-Kantor-Koecher Lie algebra
construction from spin-factor Jordan tori $\cJ$ and $\cJ'$ that are isotopic
but not isograded-isomorphic. (See also Remark \ref{rem:motivation2} above.)
\end{remark}

\section{An approach to classification}
\label{sec:approach}

In this section we describe an approach to  the classification of fgc Lie tori
first up to bi-isomorphism and then up to isotopy.

\subsection{A proposition about finite abelian groups}
\label{subsec:finiteabelian}

In  this subsection,  we prove a  proposition about finite abelian groups.  (This will
be applied in subsection \ref{subsec:approach} to the group generated by a sequence of commuting finite order
automorphisms.)  We will prove this proposition using the following
lemma about   integral matrices.

\begin{lemma}
\label{lem:matrix}
Suppose that $m_1,\ldots,m_n$ are positive integers and
$m_{i+1}$divides $m_i$ for $1\leq i<n$. Let $\mathcal{M}$ be the right
ideal generated by $\diag(m_1,\ldots,m_n)$ in $\Mat_n(\bbZ)$, the
ring of $n\times n$ matrices over $\bbZ$. If $A,B\in \Mat_n(\bbZ)$ with
\[
AB\equiv \Id_n\pmod{\cM},
\]
then there is $P\in \GL_n(\bbZ)$ with
$AP\equiv \diag(1,\ldots,1,p) \pmod{\cM}$ where
\begin{equation}
\label{eq:pcond}
0\leq p\leq\left\lfloor\frac{m_n}{2}\right\rfloor  \andd \gcd(p,m_n)=1
\end{equation}
(so $p=0$, if $m_n=1$). Moreover, $p$ is uniquely determined by the
equivalence class of $A \pmod{\cM}$.
\end{lemma}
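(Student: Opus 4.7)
I will reformulate the existence statement as a problem about lattices and quotient groups, prove it by induction on $n$, and handle uniqueness separately via a short determinant computation modulo $m_n$. For the reformulation, note that the hypothesis $AB \equiv \Id_n \pmod{\cM}$ is equivalent to surjectivity of the $\bbZ$-linear map $\pi_A : \bbZ^n \to \bigoplus_{i=1}^n \bbZ/(m_i)$ sending a column vector $v$ to $((Av)_1 \bmod m_1,\ldots,(Av)_n \bmod m_n)$, since the columns of $B$ provide preimages of the standard generators. Setting $L := \ker \pi_A$ and letting $g_i \in \bbZ^n/L$ denote the preimage under the induced isomorphism $\bbZ^n/L \cong \bigoplus_i \bbZ/(m_i)$ of the standard generator of the $i$-th factor, the conclusion $AP \equiv \diag(1,\ldots,1,p) \pmod{\cM}$ is equivalent to the statement that the columns $v_j := Pe_j$ of $P$ form a $\bbZ$-basis of $\bbZ^n$ satisfying $v_j + L = g_j$ for $j<n$ and $v_n + L = p\, g_n$, with $\gcd(p,m_n)=1$.

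I prove existence by induction on $n$. The base case $n=1$ is immediate. For the inductive step, the first move is to produce a primitive $v_1 \in g_1 + L$, using the standard fact that a coset of a full-rank sublattice of $\bbZ^n$ contains a primitive vector unless it lies inside $p\bbZ^n$ for some prime $p$; the latter would simultaneously require $L\subseteq p\bbZ^n$ (forcing $p \mid m_n$) and $g_1 \in p(\bbZ^n/L)$ (forcing $\gcd(p,m_1)=1$), which is impossible since $m_n \mid m_1$. Having chosen such a $v_1$, I extend it to a $\bbZ$-basis of $\bbZ^n$ and pass to the quotient $\bbZ^n/\bbZ v_1 \cong \bbZ^{n-1}$. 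The induced surjection onto $(\bbZ^n/L)/\bbZ g_1 \cong \bigoplus_{i>1} \bbZ/(m_i)$ still satisfies the divisibility hypothesis $m_n \mid \cdots \mid m_2$, so the inductive hypothesis delivers a $\bbZ$-basis $\bar v_2,\ldots,\bar v_n$ of $\bbZ^{n-1}$ with $\bar v_j$ mapping to the image of $g_j$ for $j<n$ and $\bar v_n$ mapping to $p$ times the image of $g_n$ for some $p$ coprime to $m_n$. Lifting each $\bar v_j$ arbitrarily and then subtracting a suitable integer multiple of $v_1$ to clear the $g_1$-component of its image in $\bbZ^n/L$ produces the required basis $v_1,v_2',\ldots,v_n'$; a sign flip of $v_n'$ finally places $p$ in $[0,\lfloor m_n/2\rfloor]$.

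For uniqueness, suppose both $AP \equiv \diag(1,\ldots,1,p) \pmod{\cM}$ and $AP' \equiv \diag(1,\ldots,1,p')\pmod{\cM}$ satisfy \eqref{eq:pcond}, and put $Q := P^{-1}P' \in \GL_n(\bbZ)$. Then $\diag(1,\ldots,1,p)\,Q \equiv \diag(1,\ldots,1,p') \pmod{\cM}$. Row $i<n$ of this congruence gives $Q_{ij} \equiv \delta_{ij} \pmod{m_i}$, while row $n$ gives $p\, Q_{nj} \equiv p'\delta_{nj}\pmod{m_n}$; since $\gcd(p,m_n)=1$, this forces $Q_{nj} \equiv 0 \pmod{m_n}$ for $j<n$. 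Using $m_n \mid m_i$ for $i<n$, the upper rows of $Q$ reduce modulo $m_n$ to $e_1,\ldots,e_{n-1}$, so $Q$ reduces modulo $m_n$ to $\diag(1,\ldots,1,Q_{nn})$; comparing determinants (using $\det Q = \pm 1$) gives $Q_{nn} \equiv \pm 1 \pmod{m_n}$, whence $p' \equiv \pm p \pmod{m_n}$, and the range condition in \eqref{eq:pcond} then forces $p'=p$.

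The main obstacle is the existence of the primitive lift $v_1 \in g_1 + L$ in the inductive step; the divisibility chain $m_n \mid \cdots \mid m_1$ is exactly what makes this step---as well as the reduction modulo $m_n$ in the uniqueness argument---go through.
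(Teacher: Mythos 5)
Your proof is correct, and it takes a genuinely different route from the paper's for the existence part. The paper proceeds by explicit column reduction: it inductively brings $A$ into the block form $\left[\begin{smallmatrix}\Id_{k-1}&0\\ \ast&\ast\end{smallmatrix}\right]$ by multiplying on the right by elements of $\GL_n(\bbZ)$, using the relation $AB\equiv\Id_n\pmod{\cM}$ to argue that the relevant gcd $d$ in row $k$ satisfies $\gcd(d,m_k)=1$, and for $k<n$ additionally arranging (by appending $m_k$ to the gcd computation) that $d\mid m_k$, hence $d=1$. This is a concrete, Smith-normal-form-style argument with no extraneous machinery, and it stays entirely at the level of integer matrices. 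Your argument instead reformulates everything in terms of lattices: you translate $AB\equiv\Id_n\pmod\cM$ into surjectivity of $\pi_A:\bbZ^n\to\bigoplus_i\bbZ/(m_i)$, then induct on $n$ by peeling off a primitive vector $v_1$ in the coset over $g_1$ and passing to $\bbZ^n/\bbZ v_1$. The key new ingredient is the lemma that a coset of a full-rank sublattice of $\bbZ^n$ contains a primitive vector unless it lies in $p\bbZ^n$ for some prime $p$ --- a fact true for $n\ge 2$ (which is what you use, since the inductive step is from $n-1$ to $n\ge 2$), and your contradiction $p\mid m_n$ versus $\gcd(p,m_1)=1$ is exactly the point where $m_n\mid m_1$ enters. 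This buys a more conceptual and arguably cleaner argument at the cost of invoking that auxiliary lemma, whereas the paper's version is self-contained and effective. Your uniqueness argument, routed through $Q=P^{-1}P'$, reduces $Q$ modulo $m_n$ to $\diag(1,\ldots,1,Q_{nn})$ and compares determinants; this is essentially the same core observation as the paper's (that $\det A\bmod m_n$ is the complete invariant together with the range normalization), just packaged differently. Two small cosmetic remarks: you should state explicitly that the primitive-vector lemma requires $n\ge 2$ (it is false for $n=1$, though the base case is handled separately); and the "sign flip of $v_n'$" at the end is harmless but redundant, since the inductive hypothesis already delivers $p$ in the prescribed range.
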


\begin{proof}
We first note that
$(a_{ij})\equiv(a_{ij}^{\prime}) \pmod{\cM}$ if and only if
$a_{ij}\equiv a_{ij}^{\prime}\pmod{m_i}$
for all $i,j$. Since $m_n$ divides $m_i$, we see that
$\det(A)\pmod{m_n}$ is an invariant of the equivalence class of
$A \pmod{\cM}$. Now if
$AP\equiv \diag(1,\ldots,1,p)\pmod{\cM}$ and $P\in \GL_n(\bbZ)$, then
$p\equiv\det(AP)\equiv\pm\det(A)\pmod{m_n}$ and $0\leq
p\leq\left\lfloor\frac{m_n}{2}\right\rfloor$ uniquely determine $p$.

To show the first statement, we shall repeatedly adjust $A$ by replacing $A$
by some $A^{\prime}\equiv AP \pmod{\cM}$ and $B$ by
$P^{-1}B$ for some $P\in \GL_n(\bbZ)$ until $A$
has the form $\diag(1,\ldots,1,p)$, where
$p$ satisfies  \eqref{eq:pcond}.
We suppose $A=(a_{ij})$ is of the form
$\left[
\begin{array}
[c]{cc}
\Id_{k-1} & 0\\
\ast & \ast
\end{array}
\right]  $ and use induction on $k$. We can replace $a_{kk}$ by
$a_{kk}+m_k$ to assume $a_{kk}\neq0$. Let $d$ be the positive $\gcd$ of
$a_{kk},\ldots,a_{kn}$. Since elementary column operations are achieved by
right multiplying by an element of $\GL_n(\bbZ)$, we can adjust columns
$k$ to $n$ to assume $a_{kk}=d$ and $a_{kj}=0$ for $k<j\leq n$. Moreover,
if $k<n$, we can then take  $a_{k,n}=m_k$, get a new $\gcd$, and again
adjust columns $k$ to $n$ to assume $d\mid m_k$. Let $B=(b_{ij})$. The
first $k$ entries in the $k$th column of
$AB\equiv \Id_n\pmod{\cM}$ give
$b_{ik}\equiv 0\pmod{m_i}$ for $1\leq i<k$ and
\[
a_{k1}b_{1k}+\ldots+a_{k,k-1}b_{k-1,k}+db_{kk}\equiv 1\pmod{m_k}.
\]
Since $m_k$ divides $m_i$ for $i<k$, we have
$b_{ik}\equiv0\pmod{m_k}$. Thus,
$db_{kk}\equiv 1\pmod{m_k}$;
i.e., $\gcd(d,m_k)=1$. If $k<n$, then $d\mid m_k$ so $d=1$.
In general, since $db_{kk}\equiv 1 \pmod{m_k}$, we can add
multiples of the $k$th column to the previous columns to get
$a_{kj}\equiv 0 \pmod{m_k}$ for $1\leq j<k$; i.e., we can adjust $A$ to assume
that $a_{kk}=d$ and $a_{kj}=0$ for $j\neq k$. If $k<n$, then $d=1$ and we
have completed the induction step. If $k=n$, we can adjust $A$ by
multiplying the $n$th column by $-1$, if necessary, to get
$d\equiv p \pmod{m_n}$ with
$0\leq p\leq\left\lfloor\frac{m_n}{2}\right\rfloor$.
\end{proof}

\begin{remark}
\label{rem:actionGL} Suppose that $G$ is  a finite abelian group.
Since $G$ is finite and abelian, we have $\cfo_n(G) = G^n$, so we have
the right action $(\bsg,P) \mapsto \bsg^P$ of
$\GL_n(\bbZ)$  on $G^n$ (see Notation \ref{not:action}(i)).
Next let
\[
\mg(G):=\text{ minimum number of generators of }G.
\]
It is well-known that $\mg(G)$ is also equal to the length of the sequence of
invariant factors of $G$.
Finally, let
\[
\gs_n(G):=\{(\sg_1,\ldots,\sg_n)\in G^{n}\mid G=\left\langle
\sg_1,\ldots,\sg_n\right\rangle \}.
\]
If $\mg(G)\leq n$, then $\gs_n(G)\neq\emptyset$ and, by \eqref{eq:bsggen},
the right action of
$\GL_n(\bbZ)$ on $G^n$ restricts to a right action of $\GL_n(\bbZ)$
on $\gs_n(G)$.
\end{remark}

We now determine representatives of the  orbits of the action of $\GL_n(\bbZ)$
on $\gs_n(G)$.

\begin{proposition}
\label{prop:orbitrep}Suppose that $G$ is a finite abelian group and
$r:=\mg(G)\leq n$. Choose  $\btau = (\tau_1,\ldots,\tau_n) \in G^n$ such that
$G=\left\langle \tau_1\right\rangle \times\cdots\times\left\langle \tau
_n\right\rangle $ (as an internal direct product) and $\left\vert \tau
_{i+1}\right\vert $ divides $\left\vert \tau_{i}\right\vert $ for $1\leq i<n$.
(So the decreasing sequence of invariant factors of $G$ is
$\order{\tau_1},\ldots,\left\vert \tau_r\right\vert $, and we have
$\left\vert \tau_{i}\right\vert =1$ for $r<i\leq n$.) Then
\[
(\tau_1,\ldots,\tau_{n-1},\tau_n^{p}),
\]
where
\begin{equation}
\label{eq:pcond2}
0\leq p\leq
\left\lfloor\frac{\order{\tau_n}}{2}\right\rfloor
\andd
\gcd(p,\order{\tau_n}) =1,
\end{equation}
is a nonredundant list of representatives of the orbits of
$\GL_n(\bbZ)$ acting on $\gs_n(G)$,
(If $r<n$ then $\order{\tau_n} = 1$ and  the only choice for $p$ is $p=0$.). Consequently, every orbit contains an element
$(\sg_1,\ldots,\sg_n)$ with the property that $\left\vert
G\right\vert =\left\vert \sg_1\right\vert \cdots\left\vert \sigma
_n\right\vert $. Also, if $r<n$ or if the smallest invariant factor of $G$
is less than or equal to $4$, then $\GL_n(\bbZ)$ acts transitively on
$\gs_n(S)$.
\end{proposition}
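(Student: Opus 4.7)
The plan is to translate the problem into the matrix language of Lemma \ref{lem:matrix}. Set $m_i = |\tau_i|$. Every $\bsg \in G^n$ can be written as $\bsg = \btau^A$ for some $A = (a_{ij}) \in \Mat_n(\bbZ)$ (namely $\sg_j = \prod_i \tau_i^{a_{ij}}$), and such $A$ is uniquely determined modulo the right ideal $\cM$ of $\Mat_n(\bbZ)$ generated by $\diag(m_1,\ldots,m_n)$, since $\cM$ consists precisely of those matrices whose $i$th row entries are divisible by $m_i$. A direct calculation yields $(\btau^A)^P = \btau^{AP}$, so the right action of $\GL_n(\bbZ)$ on $G^n$ corresponds to right multiplication on the matrix side. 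Moreover, $\bsg \in \gs_n(G)$ is equivalent to the existence of $B \in \Mat_n(\bbZ)$ with $AB \equiv \Id_n \pmod{\cM}$, since such a $B$ encodes an expression of each $\tau_j$ as a word in the $\sg_i$'s.

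Given this dictionary, the existence of an orbit representative of the form $(\tau_1,\ldots,\tau_{n-1},\tau_n^p)$ with $p$ satisfying \eqref{eq:pcond2} is immediate from Lemma \ref{lem:matrix}: for $\bsg = \btau^A \in \gs_n(G)$, pick $B$ with $AB \equiv \Id_n \pmod{\cM}$ and apply the lemma to obtain $P \in \GL_n(\bbZ)$ with $AP \equiv \diag(1,\ldots,1,p) \pmod{\cM}$; then $\bsg^P = \btau^{AP} = (\tau_1,\ldots,\tau_{n-1},\tau_n^p)$. For nonredundancy, suppose two such representatives with parameters $p$ and $p'$ lie in the same orbit, so that $\diag(1,\ldots,1,p) \equiv \diag(1,\ldots,1,p') \cdot P \pmod{\cM}$ for some $P \in \GL_n(\bbZ)$. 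Since $\gcd(p',m_n) = 1$, the matrix $\diag(1,\ldots,1,p')$ admits a right inverse modulo $\cM$ (namely $\diag(1,\ldots,1,q)$ with $qp' \equiv 1 \pmod{m_n}$), so the uniqueness clause of Lemma \ref{lem:matrix} applies and forces $p = p'$.

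The ``consequently'' clause is now automatic: the condition $\gcd(p,m_n) = 1$ in \eqref{eq:pcond2} gives $|\tau_n^p| = m_n$, so the representative just constructed satisfies $|\sg_1| \cdots |\sg_n| = m_1 \cdots m_n = |G|$. The final transitivity statement reduces to an elementary count on the single parameter $p$: if $r < n$ then $m_n = 1$ and the only allowed value is $p = 0$, while if $m_n \in \{2,3,4\}$ then the range $0 \le p \le \lfloor m_n/2 \rfloor$ with $\gcd(p,m_n) = 1$ reduces to the single value $p = 1$ in each case; in every scenario there is a unique orbit representative and hence a single orbit.

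The main obstacle I anticipate is setting up the matrix dictionary cleanly, and in particular verifying that membership in $\gs_n(G)$ is characterized precisely by right-invertibility modulo $\cM$ (as opposed to some one-sided variant that would not match the hypothesis of Lemma \ref{lem:matrix}). Once that translation is in place, all four conclusions follow mechanically from Lemma \ref{lem:matrix} plus the elementary enumeration above; no further structural input about $G$ is needed.
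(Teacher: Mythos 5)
Your proof is correct and follows essentially the same route as the paper: set up the matrix dictionary $\bsg = \btau^A$ with $A$ determined modulo the right ideal $\cM$, observe that membership in $\gs_n(G)$ gives a right inverse $B$ modulo $\cM$, and then invoke Lemma \ref{lem:matrix} for both existence and uniqueness of the representative; the remaining two assertions follow by inspection of the parameter $p$, exactly as the paper intends when it says it suffices to prove the first statement.
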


\begin{proof}
It suffices to prove the first statement. Every
$\bsg\in G^{n}$ can be written as
$\bsg =\btau^{A}$
for some $A\in \Mat_n(\bbZ)$.
Also,
$\btau^A= \btau^{A'}$
if and only if
$A\equiv A'\pmod{\cM}$, where $\mathcal{M}$ is the right ideal in $\Mat_n(\bbZ)$
generated by
$\diag(\order{\tau_1},\ldots,\order{\tau_n} )$.

Now suppose that  $\bsg\in\gs_n(G)$  and write
$\bsg=\btau^{A}$ where $A\in \Mat_n(\bbZ)$.
Then $\btau = \bsg^B$ for some
$B\in \Mat_n(\bbZ)$.  So
$\btau=\btau^{AB}$
and hence $AB\equiv \Id_n \pmod{\cM}$.
Then by Lemma \ref{lem:matrix},
there exists $P\in\GL_n(\bbZ)$ such that $AP \equiv \diag(1,\dots,1,p) \pmod{\cM}$
where $p$ satisfies \eqref{eq:pcond2}.  So
$\bsg^P = \btau^{AP} =  (\tau_1,\dots,\tau_n^p)$.

Finally, suppose that $(\tau_1,\dots,\tau_n^{p_1})^P = (\tau_1,\dots,\tau_n^{p_2})$
for some $P\in\GL_n(\bbZ)$, where $p_1,p_2$ satisfy \eqref{eq:pcond2}.
Then, $\diag(1,\dots,1,p_1)P \equiv \diag(1,\dots,1,p_2) \pmod{\cM}$,
so $p_1 = p_2$ by  Lemma \ref{lem:matrix}.
\end{proof}

\subsection{Absolute type}
\label{subsec:absolute}

We next recall  some facts about absolute type from \cite{ABP}
(see also \cite[p.87, Remark (b)]{N2}).

\begin{definition}
\label{def:absolutetype}  Suppose that $\cL$ is a prime perfect fgc Lie algebra.
Then, the centroid $C(\cL)$ of $\cL$ is an integral domain \cite[Lemma 3.3(i)]{ABP}.
Let $\overline{C(\cL)}$ denote  an algebraic closure of the quotient field of
$C(\cL)$, and put $\bar{\cL} = \cL\otimes_{C(\cL)}\overline{C(\cL)}$.
Then, $\bar{\cL}$ is a finite dimensional simple Lie algebra over $\overline{C(\cL)}$
\cite[Prop. 8.7]{ABP}.
We define the \emph{absolute type} of $\cL$ to be the type of this
algebra over $\overline{C(\cL)}$ (that is the type of its root system).
So the type of $\cL$ is one of the reduced types X$_\ell$ in the list~\eqref{eq:type}.
\end{definition}

Absolute type is an isomorphism invariant.  That is, if $\cL$ and $\cL'$
are prime perfect fgc Lie algebras and $\cL$ is isomorphic to $\cL'$, then
$\cL$ and $\cL'$ have the same type \cite[Proposition 8.15]{ABP}.

\begin{proposition}
\label{prop:type}  Suppose that $\cL$ is an fgc centreless Lie $\Lm$-torus,
where $\Lm$ is a free abelian group of rank $n\ge 1$.  Then $\cL$ is
prime and perfect and so its absolute type is defined.  Moreover, if
$\cL$ is bi-isomorphic (or even just isomorphic as a Lie algebra) to a
multiloop Lie torus $\LT(\fs,\bsg,\fh)$, then the absolute type of
$\cL$ is the type of the simple Lie algebra $\fs$ over~$k$.
\end{proposition}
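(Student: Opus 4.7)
The plan is first to verify that $\cL$ is perfect and prime --- the conditions required by Definition~\ref{def:absolutetype} --- and then, in the multiloop case, to base-change $\cL$ along $C(\cL)\hookrightarrow R\hookrightarrow\overline{C(\cL)}$ and identify the result with $\fs\otimes_k\overline{C(\cL)}$, whose type is visibly that of $\fs$. For perfectness, for each $\al\in\Dec$ I would pick $h\in\fh$ with $\al(h)\ne 0$; then every $x\in\cL_\al^\lm$ equals $\al(h)^{-1}[h,x]\in[\cL,\cL]$, and axiom (LT3) delivers $\cL=[\cL,\cL]$. For primeness, Proposition~\ref{prop:Lieprop}(i) and Lemma~\ref{lem:Cdecomp} give $C(\cL)\cong k[\Gm_\Lm(\cL)]$, a Laurent polynomial algebra over $k$ (hence a domain) since $\Gm_\Lm(\cL)$ is a subgroup of the free abelian group $\Lm$; moreover every graded piece of $C(\cL)$ is spanned by a unit, so $\cL$ is $C(\cL)$-torsion-free. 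The central closure $\cL\otimes_{C(\cL)}F$ with $F:=\mathrm{Frac}\,C(\cL)$ is then finite dimensional and simple over $F$, and the injection $\cL\hookrightarrow\cL\otimes_{C(\cL)}F$ carries any two nonzero ideals $I,J$ of $\cL$ to nonzero ideals of the simple central closure whose bracket is therefore nonzero, forcing $[I,J]\ne 0$ in $\cL$.

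For the main assertion, \cite[Prop.~8.15]{ABP} says absolute type is an isomorphism invariant, so I would assume $\cL=\LT(\fs,\bsg,\fh)=\loopm(\fs,\bsg)$ viewed inside $\fs\otimes_k R$ with $R:=k[z_1^{\pm 1},\ldots,z_n^{\pm 1}]$. Set $M:=m_1\bbZ\oplus\cdots\oplus m_n\bbZ\subseteq\Zn$, the kernel of the canonical projection $\Zn\twoheadrightarrow\bLm$. The critical identification is
\[
C(\cL) \;=\; k[M] \;=\; \bigoplus_{\gm\in M} k\,z^\gm \;\subseteq\; R,
\]
each $z^\gm$ acting on $\cL$ by multiplication. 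The inclusion $k[M]\subseteq C(\cL)$ is immediate: for $\gm\in M$, multiplication by $z^\gm$ preserves each summand $\fs^\blm\otimes z^\lm$ of $\cL$ (since $\overline{\lm+\gm}=\blm$) and commutes with brackets. For the reverse, any nonzero homogeneous $c_\gm\in C(\cL)^\gm$ commutes with $\ad(\fh\otimes 1)$ and hence preserves root grading on $\cL$; because $\fs$ is central-simple over $k$ and (A1) holds, $c_\gm$ acts by a single global scalar $a\in k^\times$ on every 1-dimensional root space $\cL_\al^\lm$ with $\al\ne 0$, so it coincides with $R$-multiplication by $a\,z^\gm$ on all of $\cL$. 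Stability of $\cL$ under this multiplication then forces $\overline\gm=0\in\bLm$: by (A3) the map $\bLm\to\Aut_k(\fs)$ given by $\bsg^{(\cdot)}$ is injective, so the eigenspaces $\fs^\blm$ and $\fs^{\blm+\overline\gm}$ intersect trivially in $\fs$ whenever $\overline\gm\ne 0$.

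Given the centroid identification, $R$ is free of rank $|\Zn/M|=m_1\cdots m_n$ over $C(\cL)$, and the $C(\cL)$-linear inclusion $\iota:\cL\hookrightarrow\fs\otimes_k R$ induces an $R$-linear Lie algebra homomorphism
\[
\bar\iota\,:\,\cL\otimes_{C(\cL)}R\longrightarrow\fs\otimes_k R.
\]
Both sides are free $R$-modules of rank $\dim_k\fs$: on the left, $\cL$ decomposes over $C(\cL)=k[M]$ into $|\bLm|$ coset-summands, each of $C(\cL)$-rank $\dim_k\fs^\blm$ for the corresponding $\blm$, summing to $\sum_{\blm\in\bLm}\dim_k\fs^\blm=\dim_k\fs$. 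Surjectivity of $\bar\iota$ is plain because the image contains $\fs^\blm\otimes z^\lm$ for every $\lm\in\Zn$, whose $R$-span is $\fs\otimes_k R$; hence $\bar\iota$ is an isomorphism. Setting $K:=\mathrm{Frac}\,R$, finiteness of $R$ over $C(\cL)$ makes $K/F$ a finite algebraic extension, so any algebraic closure $\overline K$ is canonically an algebraic closure of $F$, which I identify with $\overline{C(\cL)}$. Tensoring $\bar\iota$ over $R$ with $\overline{C(\cL)}$ then yields
\[
\overline\cL\;=\;\cL\otimes_{C(\cL)}\overline{C(\cL)}\;\simeq\;\fs\otimes_k\overline{C(\cL)},
\]
a finite-dimensional simple Lie algebra over $\overline{C(\cL)}$ with splitting Cartan $\fh\otimes_k\overline{C(\cL)}$ and root system $\De(\fs,\fh)$; its type is therefore the type of $\fs$, and this is by definition the absolute type of $\cL$.

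The hard part is the centroid identification $C(\cL)=k[M]$ above, especially the vanishing of $C(\cL)^\gm$ for $\gm\notin M$. It relies on central-simplicity of $\fs$ (so that a centroid element acts by a well-defined scalar in the simple central closure), on (A1) (which provides a large enough supply of 1-dimensional root spaces on which to read off the scalar), and on (A3) (which makes distinct $\bsg$-eigenspaces genuinely distinct subspaces of $\fs$). Once this is in hand, the Galois descent along $R/C(\cL)$ and the rank comparison that complete the argument are routine.
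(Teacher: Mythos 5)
Your proposal takes a genuinely different route from the paper's.  The paper's proof is a one-liner: reduce to the multiloop case via Theorem \ref{thm:realization} and the isomorphism invariance of absolute type \cite[Prop.~8.15]{ABP}, then cite \cite[Thm.~8.16]{ABP}, which already delivers primeness, perfectness and the absolute type computation for iterated (multi)loop algebras based on $\fs$. You instead reconstruct the argument from scratch by computing the centroid of $\LT(\fs,\bsg,\fh)$ and identifying the central closure with $\fs\otimes_k\overline{C(\cL)}$.  This architecture is sound in outline and more self-contained, and your perfectness argument and the rank bookkeeping for $\bar\iota$ are correct.  However, there are two gaps in the reconstruction.

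First, in the primeness paragraph you assert that $\cL\otimes_{C(\cL)}F$ is ``finite dimensional and simple over $F$'' as a consequence of what precedes.  Torsion-freeness and integrality of $C(\cL)$ give finite dimensionality (via Proposition~\ref{prop:fgc}), but not simplicity: graded-simplicity of $\cL$ does not obviously force every ideal of $\cL\otimes F$ to meet $\cL$ in a $\Lm$-graded ideal.  The simplicity of the central closure is exactly what \cite[Prop.~8.7]{ABP} provides, and that result takes primeness as a hypothesis, so as written the argument is at risk of circularity.  The fix is to carry out the multiloop identification $\cL\otimes_{C(\cL)}R\simeq\fs\otimes_k R$ first (it only uses the centroid computation, not primeness), base-change to $K=\operatorname{Frac}R$ to get simplicity of $\cL\otimes_{C(\cL)}K\simeq\fs\otimes_k K$, and descend along the field extension $K/F$ to conclude $\cL\otimes_{C(\cL)}F$ is simple; then the primeness argument you give closes.

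Second, the centroid identification $C(\cL)=k[M]$ is the crux, and the step ``$c_\gm$ acts by a single global scalar $a$ on every $1$-dimensional root space, so it coincides with $R$-multiplication by $a\,z^\gm$'' is not justified.  The restriction $c_\gm\colon\cL_\al^\lm\to\cL_\al^{\lm+\gm}$ is an isomorphism of one-dimensional spaces, but the source and target sit as \emph{different} subspaces $\fs_\al^\blm\otimes z^\lm$ and $\fs_\al^{\overline{\lm+\gm}}\otimes z^{\lm+\gm}$ of $\fs\otimes R$ whenever $\bar\gm\neq 0$, so ``the scalar'' of this map has no canonical meaning and nothing forces the scalars for different $(\al,\lm)$ to agree.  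What is actually needed is an extension argument: $\cL$ generates $\fs\otimes R$ as an $R$-module, $\fs$ is central simple over $k$ (so $C(\fs\otimes R)=R$), and one must show that $c_\gm$ extends well-definedly to an element of $C(\fs\otimes R)=R$.  That well-definedness is precisely where the work is; it is a known computation for multiloop algebras (essentially from \cite{ABFP}), but you would need to supply it or cite it rather than appeal to (A1) and central-simplicity in a single sentence.
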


\begin{proof} By the Realization Theorem, we can assume that
$\cL$ is equal to a multiloop Lie torus  $\LT(\fs,\bsg,\fh)$.
The result is now a special case of Theorem 8.16 of \cite{ABP}
(which applies more generally to iterated loop algebras based on $\fs$).
\end{proof}

\subsection{The approach to classification}
\label{subsec:approach}

\emph{For the  rest of the section, suppose that $\fs$ is
a finite dimensional simple Lie algebra
$\fs$ of type~X$_\ell$.}
(We will use $\fs$ as the base algebra for our multiloop
Lie tori.)  Let
\[A = \Aut_k(\fs).\]

\begin{remark}  Let $G$ be a finite abelian subgroup of $A$.

(i)  Let $X(G) = \Hom(G,k^\times)$ be the character group of $G$.
Then we have the \emph{weight space decomposition}
\[\fs = \oplus_{\chi\in X(G)} \fs^\chi\]
of $\fs$ relative to the action of $G$,
where $\fs^\chi = \set{x\in \fs : g(x) = \chi(g) x \text{ for } g\in G}$.
If $1\in X(G)$ is the trivial character (that is
$1(\gm) = 1$ for all $\gm\in\Gm$), then $\fs^1 = \fs^G$, the fixed point
subalgebra of $G$, and each $\fs^\chi$ is an $\fs^G$-module under
the adjoint action.

(ii) We will be interested in finite abelian subgroups $G$ of $A$
that satisfy the following three conditions:
\begin{itemize}
\item[(G1)] $\fg := \fs^G$ is a simple Lie algebra.
\item[(G2)] If $1\ne \chi\in X(G)$ and $\fs^\chi \ne 0$, then
$\fs^\chi \simeq U^\chi \oplus V^\chi$ as a $\fg$-module,
where $\fg$ acts trivially on $U^\chi$ and either $V^\chi = 0$ or
$V^\chi$ satisfies condition (M).
\item[(G3$_n$)] $\mg(G) \le n$.
\end{itemize}
(See Definition \ref{def:condM}.)
Its clear that if $G$ satisfies (G1) (resp.~(G1) and (G2); resp. (G3$_n$)), then any conjugate
of $G$ in $A$ also satisfies (G1) (resp.~(G1) and (G2); resp. (G3$_n$)).

(iii)  Suppose that $\mg(G) \le n$ and so $\gs_n(G) \ne \emptyset$.
Then, $\GL_n(\bbZ)$ acts on $\gs_n(G)$ on the right (see Remark \eqref{rem:actionGL}),
and hence on the left by $P\cdot \bsg = \bsg^{P^{-1}}$.  Also,
the normalizer $N_A(G)$ of  $G$ in $A$ acts on
$\gs_n(G)$ on the left by conjugation:
\[\rho\cdot \bsg  = (\rho\sg_1\rho^{-1},\dots,\rho\sg_n\rho^{-1} )\]
for $\bsg = (\sg_1,\dots,\sg_n)\in \gs_n(G)$ and $\rho \in N_A(G)$.
These left actions commute and so $N_A(G)\times\GL_n(\bbZ)$ acts on the left on $\gs_n(G)$ by
$(\rho,P)\cdot \bsg = (\rho\cdot\bsg)^{P^{-1}} = \rho\cdot(\bsg^{P^{-1}})$.
\end{remark}

We can now state our result on the classification of fgc centreless Lie tori up to bi-isomorphism.

\begin{theorem}
\label{thm:class}
Suppose $n\ge 1$ and $\fs$ is a finite dimensional simple Lie algebra of type X$_\ell$. Set
$A = \Aut_k(\fs)$.
\begin{itemize}
\item[(i)]  Let $\set{G_i}_{i\in I}$ be a nonredundant set of representatives
of the conjugacy classes of the finite abelian subgroups of $A$ that satisfy
(G1), (G2) and (G3$_n$).
\item[(ii)] For $i\in I$, let $\set{\cO(i,j)}_{j\in J_i}$ be the set of orbits
of the action of $N_A(G_i)\times\GL_n(\bbZ)$ on
$\gs_n(G_i)$. (These orbits are unions of the orbits of $\GL_n(\bbZ)$  on
$\gs_n(G_i)$, and the latter orbits were
calculated
in Proposition \ref{prop:orbitrep}.)
\item[(iii)] For $i\in I$ and $j\in J_i$, choose
$\bsg(i,j)\in \cO(i,j)$ with the property that
$\order{G_i} = \order{\bsg(i,j)_1}\dots\order{\bsg(i,j)_n}.$
(This is possible by Proposition \ref{prop:orbitrep}.)
\item[(iv)] For $i\in I$ and $j\in J_i$, let $\fh(i,j)$ be a Cartan subalgebra
of $\fs^{\bsg(i,j)}$.
\end{itemize}
Then, for $i\in I$, $j\in J_i$, $\bsg(i,j)$ satisfies
(A1)--(A3) (in Proposition \ref{prop:N}) and $\LT(\fs,\bsg(i,j),\fh(i,j))$ is a centreless
Lie $\Zn$-torus of absolute type X$_\ell$.  Moreover, if $\Lm$ is a free abelian group of rank $n$,
then any  fgc centreless Lie $\Lm$-torus of absolute type X$_\ell$
is bi-isomorphic to exactly one of the Lie tori $\LT(\fs,\bsg(i,j),\fh(i,j))$.
\end{theorem}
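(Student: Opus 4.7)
The plan is to combine three ingredients: the Realization Theorem \ref{thm:realization}, the bi-isomorphism criterion of Theorem \ref{thm:loopbi}, and a short dictionary between the group-theoretic axioms (G1), (G2), (G3$_n$) on an abstract finite abelian subgroup $G\le A$ and the sequence-theoretic axioms (A1), (A2), (A3) on a generating sequence $\bsg\in\gs_n(G)$ (Proposition \ref{prop:N}).

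First I would verify that each $\bsg(i,j)$ satisfies (A1)--(A3) and that the corresponding multiloop Lie torus has absolute type X$_\ell$. Write $m_k=\order{\bsg(i,j)_k}$ and $\bLm=\bbZ/(m_1)\oplus\cdots\oplus\bbZ/(m_n)$. Since $\bsg(i,j)\in\gs_n(G_i)$ we have $\langle\bsg(i,j)\rangle=G_i$, hence $\fs^{\bsg(i,j)}=\fs^{G_i}$, so (G1) yields (A1); and (A3) is the property built into the choice in (iii). For (A2), the factorization $\order{G_i}=m_1\cdots m_n$ forces the surjection $\bbZ/(m_1)\oplus\cdots\oplus\bbZ/(m_n)\to G_i$ sending the $k$-th generator to $\bsg(i,j)_k$ to be an isomorphism; the dual map $\blm\mapsto\chi_\blm$ defined by $\chi_\blm(\bsg(i,j)_k)=\zeta_{m_k}^{\ell_k}$ is then an isomorphism $\bLm\xrightarrow{\sim} X(G_i)$ identifying the grading of Definition \ref{def:multiloop} with the weight-space decomposition of $\fs$ under $G_i$, so (G2) becomes (A2). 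Together with Definition \ref{def:multLT}, Proposition \ref{prop:N} and Remark \ref{rem:LTconstruction}(iii), this makes $\LT(\fs,\bsg(i,j),\fh(i,j))$ into an fgc centreless Lie $\Zn$-torus, and Proposition \ref{prop:type} identifies its absolute type as X$_\ell$.

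Next I would establish existence of a bi-isomorphism to some representative. Given an fgc centreless Lie $\Lm$-torus $\cL$ of absolute type X$_\ell$, the Realization Theorem produces a multiloop Lie $\Zn$-torus $\LT(\fs'',\bsg'',\fh'')$ bi-isomorphic to $\cL$; by Proposition \ref{prop:type} the algebra $\fs''$ has type X$_\ell$, so after picking an isomorphism $\fs\xrightarrow{\sim}\fs''$ and invoking Theorem \ref{thm:loopbi} I may assume $\fs''=\fs$. Since $\bsg''$ satisfies (A1)--(A3), the dictionary above shows that $G:=\langle\bsg''\rangle$ is a finite abelian subgroup of $A$ satisfying (G1), (G2), (G3$_n$), hence $A$-conjugate to a unique $G_i$; applying Theorem \ref{thm:loopbi} with that conjugating automorphism allows me to assume $G=G_i$, so $\bsg''\in\gs_n(G_i)$. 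The $N_A(G_i)\times\GL_n(\bbZ)$-orbit of $\bsg''$ is $\cO(i,j)$ for some $j$, so $\bsg''=\rho\,\bsg(i,j)^P\,\rho^{-1}$ for some $\rho\in N_A(G_i)$, $P\in\GL_n(\bbZ)$, and Theorem \ref{thm:loopbi} yields that $\LT(\fs,\bsg'',\fh'')$, hence $\cL$, is bi-isomorphic to $\LT(\fs,\bsg(i,j),\fh(i,j))$.

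For uniqueness, if $\LT(\fs,\bsg(i,j),\fh(i,j))$ and $\LT(\fs,\bsg(i',j'),\fh(i',j'))$ are bi-isomorphic, Theorem \ref{thm:loopbi} produces $\varphi\in A$ and $P\in\GL_n(\bbZ)$ with $\bsg(i',j')=\varphi\,\bsg(i,j)^P\,\varphi^{-1}$; passing to generated subgroups and using \eqref{eq:bsggen} gives $G_{i'}=\varphi G_i\varphi^{-1}$, forcing $i=i'$ by the nonredundancy in (i) and then $\varphi\in N_A(G_i)$, so $\bsg(i,j)$ and $\bsg(i,j')$ lie in the same $N_A(G_i)\times\GL_n(\bbZ)$-orbit and $j=j'$. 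The one place that requires genuine care--and the hard part of the argument--is the translation $\bLm\simeq X(G_i)$ in the first step: the $\bLm$-grading of Definition \ref{def:multiloop} depends a priori on the chosen period vector $(m_1,\ldots,m_n)$, and one must check that precisely when (A3) holds this coincides with the intrinsic weight-space decomposition of $\fs$ under $G_i$, so that (G2) and (A2) genuinely say the same thing.
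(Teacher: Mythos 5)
Your proposal is correct and follows essentially the same route as the paper's own proof: invoke Proposition \ref{prop:N} and Proposition \ref{prop:type} for the first statement, then combine the Realization Theorem, Proposition \ref{prop:type}, Theorem \ref{thm:loopbi} and Corollary \ref{cor:loopbi1} (which you unpack inline via \eqref{eq:bsggen}) for existence and uniqueness. The only difference is that you spell out the isomorphism $\bLm\simeq G_i$ and its dual in the (G2)$\Leftrightarrow$(A2) dictionary, whereas the paper simply observes that when $\order{G_i}=m_1\cdots m_n$ the $\bLm$-grading of Definition \ref{def:multiloop} has the same summands as the $X(G_i)$-weight-space decomposition.
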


\begin{proof}
For the first statement suppose, that $i\in I$ and $j\in J_i$.  Since $\bsg(i,j)\in \gs_n(G_i)$,
we have
\begin{equation}
\label{eq:Gi}
G_i = \langle \sg(i,j)_1,\dots,\sg(i,j)_n\rangle.
\end{equation}
Hence (A3) in Proposition \ref{prop:N}  holds by the choice of $\bsg(i,j)$ in (iii).
Also, by \eqref{eq:Gi}, the grading of $\fs$ determined by $\bsg(i,j)$
(as in Definition \ref{def:multiloop}) has the same summands
as the weight space decomposition of $\fs$ relative to the action of $G_i$.  So,
since $G_i$ satisfies (G1)  and (G2), $\bsg(i,j)$ satisfies (A1) and (A2).
Thus, $\LT(\fs,\bsg(i,j),\fh(i,j))$ is a centreless Lie $\Zn$-torus by Proposition
\ref{prop:N}.  Moreover, this Lie torus has absolute type X$_\ell$ by Proposition
\ref{prop:type}.

For the second statement, suppose that $\cL$ is an fgc  centreless Lie $\Lm$-torus
of absolute type X$_\ell$, where $\Lm$ is free abelian of rank $n$.
By Theorem \ref{thm:realization}, we can
assume that $\cL = \LT(\fs',\bsg,\fh)$ is a multiloop Lie $\Zn$-torus
based on some finite dimensional simple Lie algebra $\fs'$.  Then,
$\fs'$ has absolute type X$_\ell$ by Proposition \ref{prop:type},
so we can assume that $\fs' = \fs$.  Next
let $G = \langle \sg_1,\dots,\sg_n\rangle$.  Since $\bsg$ satisfies
(A1) and (A2), it follows as in the first part of the proof that
$G$ satisfies (G1) and (G2).  Also, clearly $G$ satisfies (G3$_n$).
Hence, $G$ is conjugate in $A$ to $G_i$ for some $i\in I$.
So, by Theorem \ref{thm:loopbi}, we can assume that $G = G_i$  and thus
$\bsg\in \gs_n(G_i)$.  Hence, $\bsg\in\cO(i,j)$ for some $j\in J_i$.
So $\bsg$ and $\bsg(i,j)$ are both elements of the same orbit $\cO(i,j)$
and hence, by Theorem \ref{thm:loopbi},
$\cL(\fs,\bsg,\fh)$ is bi-isomorphic to $\LT(\fs,\bsg(i,j),\fh(i,j))$.

For the uniqueness part of the second statement, suppose that
the Lie $\Zn$-tori $\LT(\fs,\bsg(i,j),\fh(i,j))$ and
$\LT(\fs,\bsg(i',j'),\fh(i',j'))$ are bi-isomorphic,
where $i,i'\in I$, $j\in J_i$ and $j'\in J_{i'}$.
Then, by Corollary \ref{cor:loopbi1},
$G_i$ is conjugate to $G_{i'}$ in $A$, so
$i' = i$.  Hence, again by Theorem \ref{thm:loopbi},
$\bsg(i,j)$ and $\bsg(i,j')$ are in the same orbit under the action of
$N_A(G_i)\times\GL_n(\bbZ)$ on $\gs_n(G_i)$, so $j = j'$.
\end{proof}

\begin{remark}
\label{rem:class}
Suppose we have the assumptions and notation of Theorem
\ref{thm:class}, and let $i\in I$.

(i) We call the Lie tori $\LT(\fs,\bsg(i,j),\fh(i,j))$, $j\in J_i$,
the \emph{multiloop Lie tori corresponding to $G_i$}.

(ii)  If
either $\mg(G_i) < n$ or the smallest invariant factor of $G_i$ is $\le 4$,
then it follows from Proposition \ref{prop:orbitrep} that $N_A(G_i)\times\GL_n(\bbZ)$
acts transitively on $\gs_n(G_i)$ and so there is only one
multiloop Lie torus corresponding to $G_i$.  This happens frequently in practice.

(iii) If $G_i$ is contained in a torus of $A$, then, by
Corollary \ref{cor:untwisted}, we have $G_i = \set{1}$, so the only multiloop
Lie torus corresponding to $G_i$ is the untwisted one.  On the other hand,
if $G_i$ is not contained in a torus of $A$, the  multiloop
Lie tori corresponding to $G_i$ are not isograded-isomorphic
to the untwisted one (again by Corollary  \ref{cor:untwisted}).
\end{remark}

\begin{example}
\label{ex:F4}
As an example, we obtain the classification of
fgc centreless Lie $\Zn$-tori of absolute type F$_4$, where $n\ge 1$.
For this, let $\fs$ be the finite dimensional
simple Lie algebra of type F$_4$ and let $A = \Aut_k(\fs)$.
We use our results above and the work of C.~Draper and C.~Martin \cite{DM},
who found representatives of the conjugacy classes
of finite abelian subgroups of $A$ that are not contained in a torus.
They labeled these subgroups as I, II.1, II.2, II.3, II.4
and III.

Suppose that $G$ is a finite abelian subgroup of $A$ that satisfies (G1) and (G2),
and suppose that $G$ is not contained in a torus.  If $G$ has label I, II.2, II.3, II.4 or III,
then $G$ does not satisfy both (G1) and (G2).
So $G$ has label II.1.  Then,
$G = \langle \sg_1,\sg_2,\sg_3\rangle \simeq \bbZ_2^3$, where, in the notation of \cite{DM},
\[\sg_1 = \widetilde{\sg_{105}},\quad \sg_2 = t'_{-1,1,-1,1},\quad \sg_3 = t'_{1-,1,-1,1}.\]
Conversely, with this choice of $G$,
 $\fs^G$ is simple of type A$_1$ and, for $1\ne \chi\in X(G)$,
$\fs^\chi \simeq U^\chi \oplus V^\chi$, where $U^\chi$ is 2-dimensional with trivial $\fg$-action
and $V^\chi$ is the (5-dimensional) symmetric module for $\fg$.  So (G1) and (G2) hold.
Note also that $\mg(G) = 3$ and the smallest invariant factor of $G$ is $2$.

So, by Theorem \ref{thm:class} and Remark \ref{rem:class},
the fgc centreless Lie $\Zn$-tori of absolute type
F$_4$ are the untwisted multiloop Lie torus and, if $n\ge 3$, the multiloop Lie torus
$\LT(\fs,\bsg,\fh)$, where $\bsg = (\sg_1,\sg_2,\sg_3,1,\dots,1)$ and
$\fh$ is a Cartan subalgebra of  $\fs^\sg$.
\end{example}

As mentioned in the introduction, to  obtain a classification up to isotopy
of fgc centreless Lie $\Zn$-tori of a given absolute type, one can, at least in principal,
proceed  as follows.  First obtain a classification up to bi-isomorphism
using the approach described in Theorem \ref{thm:class}.
Second, use Theorem \ref{thm:isotopy} and Corollary \ref{cor:isotopy}
to determine which of the representatives of the bi-isomorphism classes are isotopic.
This last step is easy for absolute type F$_4$.

\begin{example}
\label{ex:F4isotopy}
We have carried out the classification of
fgc centreless Lie $\Zn$-tori of absolute type F$_4$ up to bi-isomorphism
in Example  \ref{ex:F4}.  The two representatives found there are
not isotopic by Corollary \ref{cor:isotopy}.
So there are precisely two  fgc centreless Lie $\Zn$-tori of absolute
type F$_4$ up to isotopy.
\end{example}


\begin{thebibliography}{AABGP}
\bibitem[AABGP]{AABGP}
B. Allison, S.~Azam, S.~Berman, Y.~Gao and A.~Pianzola,
\emph{Extended affine Lie algebras and their root systems},
Mem.~Amer.~Math.~Soc. {\bf 126} \#603 (1997).

\bibitem[ABFP]{ABFP}
B.~Allison, S.~Berman, J.~Faulkner, A.~Pianzola,
\emph{Realization of graded-simple algebras as loop algebras},
to appear in Forum Mathematicum, math.RA/0511723 on arXiv.org.

\bibitem[ABGP]{ABGP}
B. Allison,  S.~Berman, Y.~Gao and A.~Pianzola,
\emph{A characterization of affine Kac-Moody Lie algebras},
Comm. Math. Phys.  \textbf{185}  (1997), 671--688.

\bibitem[ABG]{ABG} B.~Allison, G.~Benkart and Y.~Gao,
\emph{Lie algebras graded by the root system BC$_r$, $r\ge 2$},
Mem.~Amer.~Math.~Soc. {\bf 158} \#751 (2002).

\bibitem[ABP]{ABP}
B.N.~Allison, S.~Berman and  A.~Pianzola, \emph{Iterated loop algebras},
Pacific J. Math. \textbf{227} (2006), 1--42


\bibitem[AF]{AF}  B.~Allison and J.~Faulkner,
\emph{Isotopy for extended affine Lie algebras and Lie tori}, preprint.

\bibitem[AG]{AG} B.~Allison and Y.~Gao, \emph{The root system
and the core of an extended affine Lie algebra}, Selecta
Math.  {\bf 7 } (2001), 149--212.

\bibitem[BSZ]{BSZ}  Y.A.~Bahturin, I.P.~Shestakov and M.V.~Zaicev,
\emph{Gradings on simple Jordan and Lie algebras}, J. Algebra {\bf 283} (2005), 849--868.

\bibitem[BN]{BN} G.~Benkart and E.~Neher,
\emph{The centroid of extended affine and root graded Lie algebras},
J.~Pure Appl.~Algebra {\bf 205} (2006), 117--145.


\bibitem[BZ]{BZ} G.~Benkart and E.~Zelmanov, \emph{Lie algebras graded
by finite root systems and intersection matrix algebras},  Invent. Math.  {\bf 126}  (1996), 1--45.

\bibitem[BGK]{BGK} S.~Berman, Y.~Gao and  Y.~Krylyuk, \emph{Quantum tori and
the structure of elliptic quasi-simple Lie algebras},
J. Funct. Anal. {\bf 135} (1996), 339--389.

\bibitem[BGKN]{BGKN} S.~Berman, Y.~Gao,  Y.~Krylyuk and E.~Neher, \emph{The
alternative torus and the structure of elliptic quasi-simple Lie
algebras of type $A_2$},
Trans. Amer. Math. Soc. {\bf 347} (1995), 4315--4363.

\bibitem[BFM]{BFM} A.~Borel, R.~Friedman and J.W.~Morgan,
\emph{Almost commuting elements in compact Lie groups}, Mem. Amer. Math. Soc. {\bf 157}  \#747 (2002).


\bibitem[B1]{B1} N.~Bourbaki,
\emph{\'E\'ements de math\'ematique. Fasc. XXXIV. Groupes et alg\`ebres de Lie.
Chapitre IV: Groupes de Coxeter et syst\`emes de Tits. Chapitre V: Groupes
engendr\'es par des r\'eflexions. Chapitre VI: syst\`emes de racines},
Actualit\'es Scientifiques et Industrielles, No. 1337 Hermann, Paris 1968 288 pp.

\bibitem[B2]{B2} N.~Bourbaki,
\emph{\'E\'ements de math\'ematique. Fasc. XXXVIII: Groupes et alg\`ebres de Lie.
Chapitre VII: Sous-alg\`ebres de Cartan, \'el\'ements r\'eguliers. Chapitre VIII: Alg\`ebres de Lie semi-simples d\'eploy\'ees},
Actualit\'es Scientifiques et Industrielles, No. 1364. Hermann, Paris, 1975. 271 pp.
\cite{}

\bibitem[DM]{DM} C.~Draper and C.~Martin,
\emph{Gradings on the Albert Algebra and on $f_4$},
preprint, arXiv:math/0703840.

\bibitem[GP]{GP} P.~Gille and A.~Pianzola,
\emph{Galois cohomology and forms of algebras over Laurent polynomial rings},
Math. Ann. {\bf 338} (2007),  497--543.

\bibitem[GOV]{GOV} V.V.~Gorbatsevich, A.L.~Onishchik and E.B. Vinberg,
\emph{Structure of Lie groups and Lie algebras},
in Lie Groups and Lie Algebras III,
Encyclopaedia of Mathematical Sciences Vol. 41,
A.L.~Onishchik and E.B.~Vinberg editors, Springer-Verlag, Berlin, 1994.

\bibitem[H]{H} J.~Humphreys, \emph{Linear algebraic groups}, Springer-Verlag, New York, 1975.

\bibitem[J]{J} N.~Jacobson, \emph{Lie algebras}, Dover, New York, 1979.

\bibitem[K]{K} V.G.~Kac,
\emph{Infinite dimensional Lie algebras}, Third edition, Cambridge
University Press, Cambridge, 1990.

\bibitem[KS]{KS}
V.G.~Kac, A.V.~Smilga,
\emph{Vacuum structure in supersymmetric Yang-Mills theories with any gauge group.
The many faces of the superworld}, 185--234, World Sci. Publ., River Edge, NJ, 2000.

\bibitem[KW]{KW} V.G.~Kac and S.P.~Wang,
\emph{On automorphisms of Kac-Moody algebras
and groups},
Advances in Math. {\bf 92} (1992), 129--195.

\bibitem[Mc]{Mc} K.~McCrimmon,
\emph{A taste of Jordan algebras}, Springer, New York, 2004.

\bibitem[N1]{N1} E.~Neher, \emph{Lie tori},
C.R. Math. Acad. Sci. Soc. R. Can.,
{\bf 26} (2004), pp.~84--89.

\bibitem[N2]{N2} E.~Neher,  \emph{Extended affine Lie algebras},
C.R. Math. Acad. Sci. Soc. R. Can.,
{\bf 26} (2004), pp.~90--96.

\bibitem[P]{P} A.~Pianzola, \emph{On automorphisms of semisimple Lie algebras},
Algebras Groups Geom. {\bf 2} (1985), 95--116.


\bibitem[Sel]{Sel}  G.B.~Seligman, \emph{Rational methods in Lie algebras},
Lect. Notes  in Pure and Applied Math. {\bf 27}, Marcel
Dekker, New York, 1976.

\bibitem[vdL]{vdL} J.~van de Leur,
\emph{Twisted toroidal Lie algebras},
preprint, arXiv math/0106119 v1, 2001.


\bibitem[Y1]{Y1} Y.~Yoshii, \emph{Coordinate algebras of extended affine
Lie algebras of type $A_1$}, J. Algebra {\bf 234} (2000), pp.~128--168.

\bibitem[Y2]{Y2}
Y.~Yoshii, \emph{Lie tori---A simple characterization of extended
affine Lie algebras\/}, to appear in Publ. Res. Inst. Math. Sci.,
\#167 on the Jordan theory preprint archives at http://www2.uibk.ac.at/mathematik/loos/jordan/.

\bibitem[Y3]{Y3}
Y.~Yoshii, \emph{Root systems extended by an abelian group and
their Lie algebras}, J. Lie Theory {\bf 14}
(2004), pp.~371--374.
\end{thebibliography}
\end{document}